\documentclass[11pt,a4paper]{amsart}
\usepackage[utf8x]{inputenc}
\usepackage[T1]{fontenc}
\usepackage{tikz}
\usepackage{amssymb}
\usepackage{amsmath}
\usepackage{hyperref}
\usepackage{mystyle}

\usepackage[shortlabels]{enumitem}
\usepackage{calc}
\usepackage{StandardNewCommands}
\usepackage{multirow}
\usetikzlibrary{arrows}
\usepackage{mathtools}
\usepackage{bbm}
\usepackage{numprint}
\usepackage{array}

\usepackage[style=alphabetic, backend=biber, backref=true,
 doi=false,isbn=false,url=false,
 eprint=true, sorting=nyt, maxnames=99]{biblatex}
\usepackage{url}

\usepackage[capitalize]{cleveref}
\crefname{lemma}{Lemma}{Lemmata}
\crefname{proposition}{Proposition}{Propositions}
\crefname{theorem}{Theorem}{Theorems}
\crefname{corollary}{Corollary}{Corollaries}
\crefname{algorithm}{Algorithm}{Algorithms}
\crefname{example}{Examples}{Examples}
\crefname{remark}{Remark}{Remarks}
\crefname{assumption}{Assumption}{Assumptions}
\crefname{enumi}{Step}{Steps}

\newcommand{\Z}{\mathbb{Z}}
\newcommand{\Q}{\mathbb{Q}}
\newcommand{\Qbar}{\overline{\mathbb{Q}}}
\newcommand{\Kbar}{\overline{K}}

\newcommand{\frakp}{\mathfrak{p}}

\newcommand{\lambdabar}{\overline{\lambda}}
\newcommand{\Nm}{\mathrm{Nm}}
\newcommand{\disc}{\mathrm{disc}}
\newcommand{\rank}{\mathrm{rank}}

\newcommand{\OM}{\mathcal{O}_M}
\newcommand{\calO}{\mathcal{O}}
\newcommand{\C}{\mathcal{C}}

\newcommand{\Jac}{\mathrm{Jac}}
\newcommand{\calB}{\mathcal{B}}
\newcommand{\Ht}{\mathrm{ht}}
\newcommand{\im}{\mathrm{im}}
\newcommand{\an}{\mathrm{an}}

\newcommand{\GM}{G_{M}}

\newcommand{\Art}{\mathrm{Art}}

\DeclareMathOperator*{\lcm}{lcm}

\title[Computing residual representations of IM abelian threefolds]{Computing residual representations of abelian threefolds with imaginary multiplication}

\author{Shiva Chidambaram}

\address{University of Wisconsin-Madison,
480 Lincoln Dr, Madison, WI 53706 USA.}
\email{chidambaram3@wisc.edu}
\urladdr{\url{https://people.math.wisc.edu/~chidambaram3/}}

\author{Pip Goodman}

\address{Max Planck Institute for Mathematics, Vivatsgasse 7, 53111 Bonn, Germany}
\email{pip.goodman@ub.edu}
\urladdr{\url{https://pipgoodman.github.io/}}

\subjclass[2020]{Primary 11F80; 11G10, 11G15, 14K15}

\date{}

\begin{document}

\maketitle
\begin{abstract}
Let $M$ be an imaginary quadratic field. Let $A$ be a polarised abelian threefold defined over $M$ with geometric endomorphism algebra isomorphic to $M$. We study residual Galois representations attached to $A$. In particular, we describe the endomorphism field and the natural restrictions placed on the residual representations by their endomorphisms. We then devise criteria for the image of residual Galois representations to be large, and provide efficient algorithms suitable for large scale calculations.

Subsequently, we apply this algorithm to millions of curves in several families, whose Jacobians are abelian threefolds with imaginary multiplication, and to Sutherland's dataset of $7$-smooth Picard curves. We produce an explicit example of a Picard curve which appears to have an isogeny of degree $13$ defined over $\mathbb{Q}(\zeta_3)$. The Jacobians of all other curves, in the range of our computation with endomorphism algebra $M$, have mod-$\ell$ image as large as possible for any prime $\ell>7$. This allows us to realise the group $\Gamma\mathrm{U}_3(\ell)$ of unitary semisimilitudes as a Galois group over $\mathbb{Q}$ for all $\ell \not\equiv 1, 25, 121 \pmod{168}$.

Our algorithms also led us to the discovery of several interesting rational families of Picard curves whose generic members appear to have endomorphism algebra of dimension $6$ which is not a CM field. These represent curves on the Picard modular surface and appear to be Shimura curves. Some of them appear to parametrise non-principally polarised abelian surfaces with quaternionic multiplication.
\end{abstract}

\section{Introduction}
Let $M$ be an imaginary quadratic field and fix an algebraic closure $\overline{M}$ of $M$.
In this article we study the residual representations associated to abelian threefolds $A$ defined over $M$, with a fixed polarisation and 
geometric endomorphism algebra isomorphic to $M$, i.e., 
$\End^0(A_{\overline{M}}) \coloneqq \End(A_{\overline{M}}) \otimes_{\ZZ} \QQ \cong M$.

The main contribution of this paper is an algorithm which given sufficiently many characteristic polynomials of Frobenius elements for such an abelian variety returns a small set of primes outside of which the residual mod-$\ell$ representations are as large as possible.
Moreover, the algorithm is sufficiently fast that it may be applied to large datasets of such abelian varieties.

The following theorem is obtained as a consequence. 
Recall that $\GammaU_3(\ell)$ denotes the group of semisimilitudes of a non-degenerate hermitian pairing on a 3 dimensional vector space  over  $\FF_{\ell^2}$ \cite[page 11, page 15 Table 2.1.B]{KleidmanLiebeck}.

\begin{theorem}[$\subseteq$ \cref{thm:inverse_Galois_explicit_curves}]
\label{thm:GammaU_is_GaloisgroupoverQ}
    Let $\ell$ be a prime, not congruent to $1, 25$ or $121$ modulo $168$. Then $\GammaU_3(\ell)$ is a Galois group over $\QQ$.
\end{theorem}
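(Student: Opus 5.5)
The approach is to realise $\GammaU_3(\ell)$ as the image of the mod-$\ell$ Galois representation of $G_\QQ$ attached to the Jacobian $A=\Jac(C)$ of an explicit curve $C/\QQ$. We need $A_{\overline{\QQ}}$ to have endomorphism algebra $M$, with the endomorphisms defined over $M$ but not over $\QQ$. A Picard curve $y^3=f(x)$ with $f\in\QQ[x]$ of degree $4$ is the natural choice, with $M=\QQ(\zeta_3)$. Here complex conjugation acts semilinearly on $A[\ell]$, viewed as a $3$-dimensional $\calO_M/\ell$-module. The polarisation gives a hermitian (or, for split $\ell$, a paired) form preserved up to similitude.

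For $\ell$ inert in $M$, the image of $G_M$ lands in $\mathrm{GU}_3(\ell)$. The image of $G_\QQ$ lands in $\GammaU_3(\ell)=\mathrm{GU}_3(\ell)\rtimes\mathrm{Gal}(\FF_{\ell^2}/\FF_\ell)$, and it surjects onto the quotient because $M\not\subseteq\QQ$. For $\ell$ split one gets instead $\mathrm{GL}_3(\FF_\ell)\times\mathbb{G}_m$-type images. The statement concerns $\GammaU_3(\ell)$ for every such $\ell$, so the real need is: for each residue class of $\ell$, a curve over $\QQ$ whose $G_\QQ$-image, appropriately interpreted, is $\GammaU_3(\ell)$. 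I expect this to be packaged in \cref{thm:inverse_Galois_explicit_curves} by invoking the large-image criterion for each prime $\ell$ outside a small explicit exceptional set.

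The key steps, in order, are as follows.

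\begin{enumerate}
\item Apply the earlier description of the endomorphism field and of the natural constraints on $\overline{\rho}_{A,\ell}$. This shows the image is contained in the semisimilitude group and that the multiplier map is surjective; the latter uses surjectivity of the mod-$\ell$ cyclotomic character together with the Weil pairing.
\item Apply the paper's large-image criterion, based on the classification of maximal subgroups of $\mathrm{U}_3(\ell)$ and $\mathrm{SL}_3$. Using Frobenius characteristic polynomials, it rules out reducible images, imprimitive images, and the finitely many exceptional and subfield cases for all $\ell$ outside a computed finite set $S$.
\item Conclude that $\mathrm{SU}_3(\ell)$ is contained in the image. The determinant and multiplier then force the full $\mathrm{GU}_3(\ell)$ on $G_M$, and the action of complex conjugation gives $\GammaU_3(\ell)$ on $G_\QQ$.
\item For the primes in $S$, run the algorithm on additional curves from the families, with different conductors, until every $\ell>7$ is covered. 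Handle small $\ell$ by direct computation or by separate curves.
\end{enumerate}

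The main obstacle, and the source of the congruence restriction, is the split case. When $\ell$ splits in $\QQ(\zeta_3)$, a Picard curve yields a linear rather than a unitary group. So for $\GammaU_3(\ell)$ one needs an imaginary quadratic field $M$ in which $\ell$ is inert, together with an explicit family over $\QQ$ whose Jacobians have multiplication by $M$ defined over $M$. If the families available have $M\in\{\QQ(\sqrt{-1}),\QQ(\sqrt{-2}),\QQ(\sqrt{-3}),\QQ(\sqrt{-7})\}$, then $\ell$ is inert in at least one of them unless $\ell$ splits in all four. That happens exactly when $\ell\equiv 1,25,121 \pmod{168}$, which matches the exclusion in the statement.

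The plan is therefore: choose one explicit curve for each of these four fields, and check, by the algorithm, that its mod-$\ell$ image is maximal for all $\ell>7$ inert in that field. Then treat $\ell\le 7$ separately, either via special curves or via known realisations of these small groups.
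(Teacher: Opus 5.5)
Your plan is essentially the paper's proof. It uses one explicit Jacobian with imaginary multiplication by each of $\QQ(i)$, $\QQ(\sqrt{-2})$, $\QQ(\sqrt{-3})$, $\QQ(\sqrt{-7})$ (the paper takes three hyperelliptic curves and one plane quartic), the large-image algorithms giving $\rho_\lambda(G_M)=\CU_3(\ell)$ at inert $\ell$, the passage to $\GammaU_3(\ell)$ over $\QQ$ because the endomorphisms are defined over $M$ but not over $\QQ$, and the excluded classes $1,25,121 \pmod{168}$ as exactly the primes split in all four fields.

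The points you leave open are handled more simply in the paper:
\begin{itemize}
\item Each curve is chosen so that the odd primes dividing its discriminant are split or ramified in its field. One curve per field then has good reduction at every odd inert prime, and no search over further conductors is needed.
\item The primes $\ell=3,5,7$ are covered by the same curves. At $\ell=3$ an extra Frobenius check on the $\QQ(i)$ curve excludes the $\mathcal{S}$-type subgroup.
\item $\ell=2$ follows from Shafarevich's theorem, since $\GammaU_3(2)$ is soluble.
\item In the paper's notation the image of $G_M$ lies in $\CU_3(\ell)$ rather than $\GU_3(\ell)$, and $\GammaU_3(\ell)=\CU_3(\ell)\rtimes\Gal(\FF_{\ell^2}/\FF_\ell)$.
\end{itemize}
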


The algorithms developed in  \cref{section:criteriaforfullimage} do not require $\End^0(A_{\overline{M}}) \cong M$ and in fact may be applied to any abelian threefold $A$ with imaginary multiplication by $M$ of signature $(2,1)$ (see \cref{def:signature,def:imaginary_multiplication}).
However, if $\End^0(A_{\overline{M}}) \not \cong M$, then these algorithms will fail.
This turns out to be an effective way to discover such abelian varieties.
Indeed, using our algorithms we were able to discover several families of curves whose Jacobians have endomorphism algebras properly containing $M$.

We let $B_{d}$ denote the quaternion algebra over $\QQ$ with discriminant $d$.
\begin{theorem}
Let $a \in \QQ$ have height at most 20.
Let $f_a$ be one of the polynomials defined in the table below.
Suppose $f_a$ is separable.
Let $A$ be the Jacobian of the smooth affine curve defined by $y^3=f_a(x)$.
Then $\End^0(A_{\Qbar})$ contains the $\QQ$-algebra listed in the same row as $f_a(x)$ below.
\begin{center}

\begin{table}[h!]
\renewcommand{\arraystretch}{1.2}
\begin{center}
 \hspace*{-1.7cm}
\begin{tabular}{|p{13cm}|p{2.4cm}|}
    \hline
    $f_a(x)$ &  
    $\QQ$-algebra \\
    \hline
    $x(x^3+12x^2+ax+64)$ 

    &  $\Q(\zeta_3) \times M_2(\Q)$ \\

    $((x+1)^2 - a(5a+1))((x-1)^2 + 27a(a+1))$ & $\Q(\zeta_3) \times M_2(\Q)$ \\

    $(x^2 - 27a(a-1)(3a^2-3a+32))^2 + 2916a^2(a-1)(x - (a-1)(7a-16))^2$ & $\Q(\zeta_3) \times B_{15}$ \\

    $ (x - 3a)\big( (x + a)^3 - a(17a^3 + 6a^2 + 24a + 8)(x + a) + a^2(27a^4 + 14a^3 + 60a^2 + 24a + 32) \big)$ & $\Q(\zeta_3) \times B_6$ \\

    \hline
\end{tabular}
\end{center}
\end{table}
\end{center}    
\end{theorem}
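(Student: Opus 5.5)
The statement only concerns finitely many curves (those with $a\in\QQ$ of height at most $20$ and $f_a$ separable), so I plan a certified computation for each one rather than a uniform proof over the family. The first factor is free: $y^3=f_a(x)$ is a Picard curve with the automorphism $(x,y)\mapsto(x,\zeta_3 y)$. This automorphism induces an embedding $\Q(\zeta_3)\hookrightarrow \End^0(A_{\Qbar})$ whose action on the space of holomorphic differentials $dx/y^2,\ x\,dx/y^2,\ dx/y$ has signature $(2,1)$. So the work is to exhibit the second factor. The target shape is an isogeny $A_{\Qbar}\sim E_0\times S$, where $E_0$ has CM by $\Q(\zeta_3)$ and $S$ is an abelian surface. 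In the $M_2(\Q)$ rows, $S\sim E'^2$ with $\End^0(E')=\Q$. In the $B_{15}$ and $B_6$ rows, $S$ is a QM surface. In every row the $\zeta_3$-action restricts to $S$ through an embedding of $\Q(\zeta_3)$ into the quaternion factor. Such an embedding exists because no prime dividing $15$ or $6$ splits in $\Q(\zeta_3)$.

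For each curve I would do the following steps in order.

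\emph{Step 1.} Compute a high-precision period matrix $\Pi$ of the curve, e.g.\ via Molin--Neurohr for superelliptic curves.

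\emph{Step 2.} Find, by LLL, the integral matrices $R\in M_6(\ZZ)$ with $\Pi R = T\Pi$ for some $T\in M_3(\CC)$. This yields a $\ZZ$-basis of the candidate endomorphism ring. Its rank should be $1\cdot 2+4=6$, matching the dimension of $\Q(\zeta_3)\times M_2(\Q)$ or $\Q(\zeta_3)\times B_d$.

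\emph{Step 3.} Certify each candidate endomorphism rigorously, following Costa--Mascot--Sijsling--Voight. That is, recognise the tangent matrix $T$ over a number field $K$, and either lift it to an explicit divisorial correspondence on $C\times C$ defined over $K$ or verify it by their Puiseux/correspondence check. Only lower bounds are needed, so certifying a generating set of the algebra is enough; no Frobenius-based upper bound is required.

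\emph{Step 4.} Identify the algebra. Compute the centre of the certified $\Q$-algebra, which should be $\Q(\zeta_3)\times\Q$. Then compute the idempotent that cuts out $S$, the symplectic/Rosati structure, and the multiplication table on the complementary $4$-dimensional simple factor. I would find anticommuting elements $i,j$ with $i^2=\alpha$ and $j^2=\beta$ in $\Q^\times$, and determine the algebra by computing the Hilbert symbols $(\alpha,\beta)_p$. The result should be split for the first two rows and ramified exactly at $\{3,5\}$, respectively $\{2,3\}$, for the last two.

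As a cheaper alternative for the $M_2(\Q)$ rows, I would first look for explicit maps of degree $2$ or $3$ from $C$ to elliptic curves, defined over a small field uniformly in $a$. Two such maps whose pullbacks are independent give $E'^2$ up to isogeny inside $A$, which proves the containment generically in $a$ and avoids per-curve certification.

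I expect the main obstacle to be Step 3 for the quaternionic rows. There the endomorphisms are typically defined only over a fairly large extension $K$. Recognising $T$ algebraically then needs high precision, and certifying correspondences of large degree is expensive, and this must be repeated for every $a$ of height at most $20$. To control this I would first certify just one non-central endomorphism $\phi$ of $S$ that does not commute with the $\zeta_3$-action. The algebra generated by $\zeta_3$ and $\phi$ already has dimension $6$, so it must be the whole stated algebra once Step 4 identifies its discriminant. I would also use that the field of definition $K$ can be predicted from the curve of smallest height in the family, to speed up the algebraic recognition for the remaining curves.
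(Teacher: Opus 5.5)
Your plan is correct and is essentially the paper's own route: the paper supports the statement by running the Costa--Mascot--Sijsling--Voight endomorphisms package on each of the finitely many curves with $\Ht(a)\le 20$. The one difference is that the paper pins down the quaternion factor by splitting off the Prym surface $P$ with idempotents and computing $\End(P_{\Qbar})$ (a maximal order in $B_{15}$, respectively an index-$3$ order in $B_6$), rather than by your Hilbert-symbol computation. Your Step 3, certifying the endomorphisms via correspondences, is worth keeping: the paper calls its own per-curve check heuristic, and it is certification that turns the check into an actual proof of the containment.
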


We believe that for any $a \in \Qbar$ such that $f_a$ is separable, the geometric endomorphism algebra of the Jacobian of $y^3=f_a(x)$ will always contain the corresponding $\QQ$-algebra listed above and moreover be generically equal to it.

Let us now return to our initial motivation for studying this question.
Serre's Open Image Theorem \cite{Serre72} tells us that a given elliptic curve $E$ without complex multiplication defined over a number field $K$ satisfies $\Gal(K(E[\ell])/K) \cong \GL_2(\ell)$ for all but finitely many primes $\ell$.
For an elliptic curve $E/\QQ$ without complex multiplication it is widely believed that $\Gal(\QQ(E[\ell])/\QQ) \cong \GL_2(\ell)$ for all $\ell > 37$.
The truth of this statement is widely known as ``Serre's Uniformity Conjecture''.

Zywina and Sutherland \cite{Zywina_surjectivity,Sut16} have  developed algorithms which, provided with sufficiently many traces of Frobenius for a given elliptic curve $E/\QQ$ without complex multiplication, compute $\Gal(\QQ(E)/\QQ)$ as a subgroup of $\GL_2(\ell)$ (up to conjugation) for any prime $\ell$.
This in turn gives a large amount of data supporting Serre's Uniformity Conjecture.
Moreover, for $\ell \leq 37$, they produce an explicit list of possible small images, i.e., sets of couples $(G,\ell)$ consisting of a group $G$ and a prime $\ell$ such that $G \cong \Gal(\QQ(E[\ell])/\QQ) \subsetneq \GL_2(\ell)$ for some elliptic curve $E$ and conjecture that these are the only non-maximal mod-$\ell$ images possible.
There has been a lot of recent progress on these conjectures coming from studying the associated modular curves, see for example \cite{BDMTV19_Annals,RSZB22,BDMTV23,FL26_7adicGalimg}.

The study of the analogous problem for principally polarised abelian surfaces $A/\QQ$ with $\End(A_{\Qbar}) = \ZZ$ was initiated by Dieulefait \cite{Dieulefait_genus2}.
More specifically, he described an algorithm which given sufficiently many characteristic polynomials of Frobenius, returns a small finite set of primes, outside of which 
$\Gal(\Q(A[\ell]) / \Q) \cong \GSp_4(\ell)$.
His algorithm was recently implemented in SageMath by the authors of \cite{BBKKMSV23}.
This implementation is in turn a key input for the main algorithm of \cite{vBCCK23} which computes the isogeny class of a given principally polarised abelian surface.

It is a natural question to understand what the possible residual images of principally polarised abelian surfaces are.
Determining points on the moduli spaces in question, which have dimension three, seems to be far out of reach from current methods.
We seem, however, to be at the dawn of provably determining rational points on surfaces \cite{Caro_Pasten_Chabauty_for_surfaces}.
It is our hope that the data provided in \cref{sec:results} will aid and inform our understanding of which Picard modular surfaces should have either infinitely or finitely many rational points,
thus identifying interesting testing ground for the determination of rational points on surfaces.

Let us now briefly summarise the structure of the paper.
In \cref{sec:endomorphism_structure}, we analyse the endomorphism structure of abelian threefolds $A$ over a number field $K$ such that $\End^0(A_{\overline{K}}) \cong M$.
\cref{subsec:group_theory_centralisers} covers the group theoretic preliminaries we require for determining the restrictions on the images of the residual representations.
In \cref{sec:residual_reps} we then make these restrictions explicit; they allow us in particular to define mod-$\lambda$ representations and show in \cref{thm:containment_for_rho_lambda} that their images are contained in the group $\CL^\varepsilon_3(\ell)$ 
(see \cref{subsec:Notation} for the notation).
The action of inertia at $\ell$ on $A[\ell]$ is then studied in \cref{subsection:inertia_at_ell}.
This in particular allows us to prove surjectivity results for the determinants of the mod-$\lambda$ representations.
We begin \cref{section:criteriaforfullimage} by analysing the maximal subgroups of $\CL^\varepsilon_3(\ell)$ and then go on to give tests for the containment of the images of mod-$\lambda$ representations in these maximal subgroups.
Finally in \cref{sec:results}, we list the results obtained from our implementation of algorithms developed in \cref{section:criteriaforfullimage}.

Magma \cite{Magma} implementations of our algorithms, and collected data are available in the GitHub repo \cite{CG26_IMGalreps}.

\subsection{Notation}
\label{subsec:Notation}
Our notation for the classical groups is taken to reflect that of \cite{Bray_Holt_Roney-Dougal,KleidmanLiebeck}. 
Let $V$ be a vector space over a field $\FF$ and $\langle, \rangle \colon V \times V \rightarrow \FF$ a bilinear map.
There is a chain of subgroups associated to $\langle, \rangle$:
\[S \leq I \leq C \leq \Gamma\]
where $\Gamma$ is the set of semisimilitudes of $(V,\langle, \rangle)$, i.e., semilinear maps which preserve $\langle, \rangle$ up to a constant; $C$ is the set of similitudes of $(V,\langle, \rangle)$, i.e., the linear maps contained in $\Gamma$; $I$ is the set of isometries of $(V,\langle, \rangle)$, i.e., elements of $C$ preserving $\langle, \rangle$; and $S$ is the kernel of $\det \colon I \rightarrow \FF^*$.

If $\langle, \rangle$ is identically zero, then $S=\SL(V), I=C=\GL(V)$.
If $\langle, \rangle$ is a non-degenerate Hermitian pairing then $S=\SU(V), I = \GU(V), C=\CU(V)$ and $\Gamma =\GammaU(V)$.
If $\langle, \rangle$ is a non-degenerate symplectic pairing then $S=I=\Sp(V) $ and $ C=\GSp(V)$.
We also write $\SL^+=\SL, \GL^+=\CL^+ =\GL$ as well as $\SL^-=\SU, \GL^-=\GU$ and $\CL^-=\CU$.

We write $\C_1,\C_2,\C_3,\C_5,\C_6,\C_8,\mathcal{S}$ for the Aschbacher classes of maximal subgroups as defined in \cite[pp. 378-379]{Bray_Holt_Roney-Dougal}.

\subsection{Acknowledgments:}
We are grateful to Drew Sutherland for helping start our collaboration and his constant support which has been invaluable for this work. 
We also thank him for helpful discussions, and for sharing his dataset of $7$-smooth Picard curves.
We thank Edgar Costa for his help with the endomorphisms package, and his inputs for extending the scope of this work.
We are also grateful to Francesc Fit\'e, Davide Lombardo, Bjorn Poonen and Ramin Takloo-Bighash for helpful conversations.

SC was supported by Simons Foundation grant 550033 during part of this project and also partially supported by NSF grant DMS-2301386.

PG was supported the Deutsche Forschungsgemeinschaft (DFG) project grant STO 299/18-2 (AOBJ: 686837), by the Spanish Ministry of Science and Innovation via the grant “Abelian varieties, L-functions, and rational points” (code PID2022-137605NB-I00) and the Max-Planck-Institut für Mathematik.

The authors thank all the institutions at which they were hosted, whilst they carried out work on this project: Universität Bayreuth, Universitat de Barcelona, Max-Planck-Institut für Mathematik, Massachusetts Institute of Technology, and University of Wisconsin-Madison.

\section{The endomorphism structure}
\label{sec:endomorphism_structure}
Let $g$ be a positive integer, $K$ a number field and $\overline{K}$ a fixed algebraic closure of $K$ and $M/\QQ$ a quadratic extension.
The ultimate goal of this section is to define a class of abelian varieties generalising those of Jacobians of Picard curves.
Recall that the endomorphism algebra of an abelian variety is a semisimple $\QQ$-algebra.
We only wish to consider abelian varieties such that every simple $\QQ$-subalgebra of their geometric endomorphism algebra contains $M$.
For this reason, we make the following definition:

\begin{definition}[Unital algebra embedding]
\label{defintion:unital_algebra_embedding}
    Given $\QQ$-algebras $X, Y$ with multiplicative identities and a map $\iota \colon X \rightarrow Y$.
    We say $\iota$ is a unital algebra embedding, if it is injective, is a $\QQ$-linear map, preserves multiplication and sends the multiplicative identity of $X$ to that of $Y$.
\end{definition}

We shall now employ ideas 
from the theory of CM abelian varieties to study the endomorphism structure 
of a $g$-dimensional abelian variety $A$ over $K$ equipped with a unital algebra embedding
$\iota \colon M \hookrightarrow \End^0(A_{\Kbar})= \End(A_{\Kbar}) \otimes \QQ$.

We first recall some fundamental notions.
For an extension $K'/K$, we let $\Omega^1(A_{K})$ denote the 
space of invariant differential forms of $A_{K'}$.
It is a $g$-dimensional 
$K'$-vector space.
An endomorphism $f$ of $A$ acts on $\Omega^1(A_{K'})$ by 
pullback $f^*$.
Note that pullback reverses composition.
This action extends $\QQ$-linearly to a faithful representation
\begin{align*}
    \pi_{A,K',\an} \colon \End^0(A_{K'}) \rightarrow \End_{K'}(\Omega^1(A_{K'})), \quad f \mapsto f^*,
\end{align*}
which we call the analytic representation.

Let $\varphi, \Bar\varphi$ denote the 
two embeddings $M \xhookrightarrow{} \Kbar$. 
As $\pi_{A,\overline{K},\an} \circ \iota$ is a unital algebra embedding  and $M \otimes_\QQ \Kbar \cong \Kbar \times \Kbar$
the space $\Omega^1(A_{\Kbar})$ 
is the direct sum of the two eigenspaces
\begin{align*}
    \Omega_\varphi = \{\omega : \iota(\alpha)^*\omega = \alpha^{\varphi}\omega ~\forall~ \alpha \in M\}
    \text{ and }
    \Omega_{\Bar\varphi} = \{\omega : \iota(\alpha)^*\omega = \alpha^{\Bar\varphi}\omega ~\forall~ \alpha \in M\},
\end{align*}
where we write $\alpha^\varphi$ for $\varphi(\alpha)$.

\begin{definition}[Signature]
\label{def:signature}
    The tuple $(r,s) = (\dim \Omega_\varphi, \dim \Omega_{\Bar\varphi})$ is called the signature of $(A, \iota)$ with respect to the ordering $(\varphi, \Bar\varphi)$.
    We always assume $\varphi$ and $\Bar \varphi$ are chosen such that $r \geq s$.

    When the existence of $\iota$ is implicit from context, we shall instead refer to the signature of $(A,M)$.
\end{definition}

Equivalently, if we choose an eigenbasis $\omega_1, \dots, \omega_g$ of 
$\Omega^1(A_{\Kbar})$, so that for each $1 \leq i \leq g$, $M$ acts on $\omega_i$ through an embedding 
$\varphi_i : M \xhookrightarrow{} \Kbar$, i.e., 
$\iota(\alpha)^* \omega_i = \alpha^{\varphi_i}\omega_i$ for all $\alpha \in M$, 
then $\varphi$ occurs $r$ times in the multiset 
$\{ \varphi_i | 1 \leq i \leq g \}$ and $\Bar\varphi$ occurs $s$ times.
In other words, the characteristic polynomial of $\iota(\alpha)^*$ is 
$\prod_{i=1}^g (x - \alpha^{\varphi_i}) = (x-\alpha^{\varphi})^r(x-\alpha^{\Bar\varphi})^s$.

\begin{proposition}
\label{prop:endofield}
    Let $M$ be any quadratic field. Let $g \geq 1$ be odd, and $A/K$ be a $g$-dimensional abelian variety 
    equipped with an isomorphism $\iota \colon M \xrightarrow{\sim} \End^0(A_{\Kbar})$.

    Then $M$ is imaginary quadratic, and $MK$ is the minimal extension of 
    $K$ over which all endomorphisms of $A$ are defined.
\end{proposition}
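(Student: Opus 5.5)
We split the statement into two parts: first that $M$ is imaginary quadratic, then that $MK$ is the field of definition of the endomorphisms. Throughout, $\End^0(A_{\Kbar})\cong M$ is a field, so $A_{\Kbar}$ is isotypic, hence a power of a simple abelian variety; since $M$ is commutative of degree $2$, either $A_{\Kbar}$ is simple with endomorphism algebra $M$, or $A_{\Kbar}\sim B^2$. In the second case the endomorphism algebra would contain $M_2(\End^0(B))$, which is not commutative, so $A_{\Kbar}$ must be simple.

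\textbf{Step 1: $M$ is imaginary quadratic.} Use Albert's classification. The Rosati involution attached to a polarisation restricts to a positive involution on the field $M$. If $M$ were real quadratic, the involution would have to be the identity, since a positive involution on a commutative field is either trivial (totally real, type I) or complex conjugation (CM, type IV). Type I algebras $\End^0=F$ totally real satisfy $[F:\QQ]\mid g$. Here $[F:\QQ]=2$ and $g$ is odd, which is a contradiction. The restriction holds in characteristic zero; more elementarily, $H_1(A,\QQ)$ is a free $M$-module of rank $2g/2=g$, and for a real field acting on $H^{1,0}\oplus H^{0,1}$ compatibly with complex conjugation, each embedding gets dimension $g/2$ in $H^{1,0}$. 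This forces $g$ to be even. I would give this Hodge-theoretic argument directly: if $M$ is real, $\varphi,\bar\varphi$ are real embeddings, so complex conjugation on $H^1(A,\CC)=H^{1,0}\oplus H^{0,1}$ preserves each $\varphi$-eigenspace. Each eigenspace has dimension $g$, and conjugation swaps its $(1,0)$ and $(0,1)$ parts, so $r=g/2$. This is impossible for odd $g$. Hence $M$ is imaginary quadratic, with $r\neq s$ because $r+s=g$ is odd.

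\textbf{Step 2: endomorphisms are defined over $MK$.} $G_K$ acts on $\End^0(A_{\Kbar})\cong M$ by ring automorphisms, via conjugation $f\mapsto \sigma f\sigma^{-1}$. The analytic representation is compatible with this action: $\pi_{\an}({}^\sigma f)={}^\sigma\pi_{\an}(f)$ on $\Omega^1(A_{\Kbar})=\Omega^1(A_K)\otimes_K\Kbar$. So the characteristic polynomial of $({}^\sigma\iota(\alpha))^*$ is $\sigma$ applied to that of $\iota(\alpha)^*$, namely $(x-\sigma(\alpha^\varphi))^r(x-\sigma(\alpha^{\bar\varphi}))^s$. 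Suppose ${}^\sigma\iota(\alpha)=\iota(\bar\alpha)$, i.e. $\sigma$ acts by the nontrivial automorphism. The characteristic polynomial of $\iota(\bar\alpha)^*$ is $(x-\alpha^{\bar\varphi})^r(x-\alpha^{\varphi})^s$. Comparing the two multiplicities for $\alpha\notin\QQ$ requires either $\sigma$ to fix $\varphi(M)$ with $r=s$, or $\sigma$ to swap the embeddings. Since $r\ne s$, $\sigma$ acts nontrivially on $\End^0$ exactly when $\sigma$ acts nontrivially on $\varphi(M)\subset\Kbar$. Therefore the fixed field of the kernel of the action is $K\cdot\varphi(M)=MK$. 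Finally, the field over which all endomorphisms are defined is precisely this fixed field, because an endomorphism is defined over $L$ iff it is $G_L$-invariant. So $MK$ is the minimal such extension.

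The main obstacle is Step 1: we must ensure that the real-quadratic exclusion is rigorous over a number field. To handle it, embed $\Kbar\hookrightarrow\CC$ and work with $H_1(A(\CC),\QQ)$, checking that the eigenspace decomposition used in the definition of signature agrees with the Hodge decomposition.
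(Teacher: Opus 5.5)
Your proposal is correct and follows essentially the same route as the paper: Step 1 is the paper's Hodge-theoretic count (the paper phrases it via $H_1(A(\CC),\QQ)\otimes\CC\cong\pi_{\an}\oplus\overline{\pi}_{\an}$, giving $2r=g=2s$), and Step 2 is the paper's comparison of the characteristic polynomials of $(\iota(\alpha)^\sigma)^*$ and $\iota(\alpha^{\varrho(\sigma)})^*$ using $r\neq s$. You leave one case implicit: $\sigma$ acting trivially on $\End^0(A_{\Kbar})$ but nontrivially on $\varphi(M)$ is needed for your ``exactly when'', and it is ruled out by the same comparison, which the paper does via $\Tr(\iota(\alpha)^*)=(r-s)\alpha^\varphi+s\Tr_{M/\QQ}(\alpha)\in L$; your preliminary simplicity remark and the Albert-classification aside are not needed.
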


\begin{proof}
    Let $\OO = \End(A_{\Kbar})$, so that $\OO$ is an order in $M$. Let $(r,s)$ with $r \geq s$ be the signature of $A$ with respect to the embeddings $\varphi, \Bar\varphi$ of $M$.
    Since $r+s=g$ is odd, we have $r \neq s$.

    We first show that $M$ is imaginary quadratic. The $2g$-dimensional $\QQ$-vector space $H_1(A(\CC),\QQ)$ carries a natural action of $M$ through $\iota\colon M \cong \End^0(A_{\overline{K}})$. Since $\iota$ is 
    a unital algebra embedding, this makes $H_1(A(\CC),\QQ)$ a vector space over $M$ of dimension $g$, so the multiplicity of each of 
    $\varphi$ and $\Bar\varphi$ in $H_1(A(\CC),\QQ)\otimes_\QQ \CC$ is $g$. On the 
    other hand, by \cite[Prop. 1.2.3]{BL04}, we know that
    there is an isomorphism of $\CC[\End^0(A_{\overline{K}})]$-representations between $H_1(A(\CC),\QQ) \otimes_\QQ \CC$
    and 
    the direct sum of the analytic representation $\pi_{A,\CC,\an}$ and its 
    complex conjugate $\Bar \pi_{A,\CC,\an}$. 
    If $M$ were real quadratic, both $\varphi$ and $\Bar\varphi$ would be real, 
    so that $\Bar \pi_{A,\CC, \an} = \pi_{A,\CC, \an}$.
    This would imply that $\varphi$ and $\Bar{\varphi}$ have multiplicity $2r$ and $2s$ in $H_1(A(\CC),\QQ) \otimes_\QQ \CC$, giving $2r=g=2s$, which is absurd.

    Let $L/K$ be a finite extension. It is enough to prove that all endomorphisms 
    of $A$ are defined over $L$ if and only if $M \subseteq L$.

    Suppose that all endomorphisms of $A$ are defined over $L$. Since $\iota(\alpha)^*$ 
    is $L$-linear on $\Omega^1(A_L)$, its trace lies in $L$. Calculating it 
    over $\Kbar$ using the signature,
    \begin{align*}
        \Tr(\iota(\alpha)^*) = r \alpha^{\varphi} + s \alpha^{\Bar\varphi}
        = (r-s)\alpha^{\varphi} + s \Tr_{M/\QQ}(\alpha).
    \end{align*}
    Since $r \neq s$, we find that $\alpha^\varphi \in L$. 
    This shows that $M = \varphi(M) \subseteq L$.

    Conversely, suppose that $M \subseteq L$. The absolute Galois group $G_L$ 
    acts on $\End^0(A_{\Kbar}) \cong M$ by ring automorphisms, leading to a homomorphism
    \[\varrho\colon  G_L \rightarrow \Gal(M/\QQ)\]
    defined by $\iota(\alpha)^{\sigma} = \iota(\alpha^{\varrho(\sigma)})$. Consider an 
    eigenbasis $\{ \omega_i | 1 \leq i \leq g \}$ of $\Omega^1(A_{\Kbar})$, with 
    corresponding embeddings $\varphi_i$. The group $G_L$ acts on
    $\Omega^1(A_{\Kbar}) = \Omega^1(A_L)\otimes_L \Kbar$ via its action on $\Kbar$, and 
    $\{ \omega_i^\sigma | 1 \leq i \leq g \}$ is another basis for any 
    $\sigma \in G_L$. We have
    \begin{align*}
        \left(\iota(\alpha)^{\sigma}\right)^* \omega_i^{\sigma} = \left(\iota(\alpha)^* \omega_i\right)^{\sigma}
        = \sigma(\alpha^{\varphi_i}) \omega_i^{\sigma}.
    \end{align*}
    Since $\alpha^{\varphi_i} \in \varphi_i(M) = M \subseteq L$, we know 
    $\sigma(\alpha^{\varphi_i}) = \alpha^{\varphi_i}$.
    So the characteristic polynomial of $\left(\iota(\alpha)^{\sigma}\right)^*$ is
    $\prod_i (x - \alpha^{\varphi_i})$. But the characteristic polynomial of
    $(\iota(\alpha^{\varrho(\sigma)}))^*$ is 
    $\prod_i (x - \alpha^{\varrho(\sigma)\varphi_i})$. 
    If $\varrho(\sigma)$ is not the identity, $\alpha^\varphi$ will be a root with 
    multiplicity $r$ of the first polynomial and multiplicity $s$ of the second. 
    Since $r \neq s$, we deduce that $\varrho(\sigma) = 1 \in \Gal(M/\Q)$ for all 
    $\sigma \in G_L$. This shows that all the endomorphisms of $A$ are defined 
    over $L$.
\end{proof}

\begin{definition}[Imaginary multiplication]
\label{def:imaginary_multiplication}
    Let $A/K$ be an abelian variety and $M$ an imaginary quadratic field.
    If there exists a unital algebra embedding $\iota \colon M \hookrightarrow \End^0(A_{KM})$, then we shall say that $A$ has potential imaginary multiplication by $(M, \iota)$.
    
    Furthermore, we say that $A$ has imaginary multiplication by $(M, \iota)$ if $\iota(M) \subseteq \End^0(A)$.
\end{definition}

The next proposition is originally due to Shimura \cite{Shi63} as part of a more general result, and is also given as \cite[Exercise 9.10 (3)]{BL04}.
We provide a concise proof in the specific case we require to aid readability. 

\begin{proposition}
\label{prop:g0_implies_Eg}
Let $M$ be an imaginary quadratic field, and $A/K$ be a $g$-dimensional abelian variety 
with imaginary multiplication by $(M,\iota)$.
If the signature of $(A, \iota)$ is $(g,0)$,
then $A$ is geometrically isogenous to $E^g$ for an elliptic curve $E/M$ with complex multiplication by $M$.
\end{proposition}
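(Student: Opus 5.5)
We plan to work complex-analytically and prove that $A_{\CC}$ is isomorphic to $E^g$ for a CM elliptic curve $E$. After that we descend the field of definition of $E$.

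Write $A(\CC) = V/\Lambda$, where $V = \mathrm{Lie}(A_\CC)$ has dimension $g$ over $\CC$ and $\Lambda \subset V$ is a lattice. Replace $\Lambda$ by an isogenous lattice stable under the maximal order $\OM$; for instance, take $\OM\cdot\Lambda$. This changes $A$ only up to isogeny, and then $\OM$ acts on $A_\CC$. The analytic representation of $\iota(\alpha)$ on $V$ is dual to its action on $\Omega^1$, so it has the same eigenvalues. Since the signature is $(g,0)$, every $\alpha\in M$ acts on $V$ as the scalar $\varphi(\alpha)$. Hence $V$ is a $\CC$-vector space on which $M$ acts through the fixed embedding $\varphi\colon M\hookrightarrow\CC$. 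Note also that $\Lambda$ is a finitely generated torsion-free $\OM$-module of $\OM$-rank $g$, because $\Lambda\otimes\QQ=H_1(A(\CC),\QQ)$ is $g$-dimensional over $M$, as in the proof of \cref{prop:endofield}.

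Next we choose an $M$-basis $v_1,\dots,v_g$ of $\Lambda\otimes\QQ$; after scaling, we may take it to lie in $\Lambda$. Consider the sublattice $\Lambda' = \bigoplus_i \OM v_i$, which has finite index in $\Lambda$. The key point is that the $\RR$-linear map $M\otimes_\QQ\RR \to \CC$, $\alpha\otimes t\mapsto \varphi(\alpha)t$, is an isomorphism $M\otimes\RR\cong\CC$, and it is compatible with the action of $M$ on $V$ being scalar multiplication by $\varphi$. So $\Lambda\otimes\RR = \bigoplus_i (M\otimes\RR)v_i$ equals $\bigoplus_i \CC v_i$, and the $v_i$ form a $\CC$-basis of $V$. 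Therefore
\[V/\Lambda' \cong \prod_{i=1}^g \CC/\varphi(\OM)\]
as complex tori. The natural map $V/\Lambda'\to V/\Lambda$ is an isogeny, so $A_\CC$ is isogenous to $E_0^g$ with $E_0=\CC/\varphi(\OM)$, which has CM by $\OM$.

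It remains to see that we may take $E$ defined over $M$ and to pass from $\CC$ to $\Kbar$. CM theory says $j(E_0)$ is algebraic and generates the Hilbert class field of $M$. Since any elliptic curve with CM by $M$ is isogenous over $\Qbar$ to any other, we may replace $E_0$ by an elliptic curve $E/M$ with CM by an order in $M$ (for instance, choose an order of class number one). Such an $E$ is defined over $\QQ$, hence over $M$, and is $\Qbar$-isogenous to $E_0$. Finally, an isogeny between abelian varieties over $\Kbar$ that exists over $\CC$ is defined over $\Kbar$, because $\mathrm{Hom}$ schemes are unramified and their points are rigid. So $A_{\Kbar}\sim E^g$.

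The main obstacle is the middle step: checking carefully that the signature $(g,0)$ forces $\Lambda\otimes\RR=\bigoplus\CC v_i$. One has to make sure that the real structure coming from $M\otimes\RR$ matches the complex structure on $V$; this is exactly where the vanishing of $s$ is used. The rest is standard bookkeeping.
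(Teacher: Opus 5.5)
Your complex-analytic core is correct and is essentially the paper's argument. Both pass to an $\OM$-stable lattice, use signature $(g,0)$ to see that $M$ acts on $V$ through the scalars $\varphi(\alpha)$, and deduce that $\OM$-module generators of the lattice form a $\CC$-basis of $V$. The only difference is in how the lattice is split. The paper uses the structure theorem over the Dedekind domain $\OM$ to write $\Lambda=\bigoplus_n \pi(I_n)x_n$ exactly, obtaining $A(\CC)\cong\prod_n\CC/\varphi(I_n)$. You instead pass to the free finite-index sublattice $\bigoplus_i\OM v_i$ and get an isogeny to $E_0^g$ with a single $E_0=\CC/\varphi(\OM)$. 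This avoids the structure theorem at the cost of one harmless extra isogeny.

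The descent paragraph, however, contains a step that fails.

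\begin{itemize}
\item An order of class number one exists in $M$ only when $h_M=1$: the map $\mathrm{Pic}(\mathcal{O})\to\mathrm{Pic}(\OM)$ is surjective, so $h_M\mid h(\mathcal{O})$.
\item More seriously, for $h_M>1$ no elliptic curve defined over $M$ has geometric endomorphism ring an order $\mathcal{O}$ of $M$ at all. Indeed, $M(j(E))$ is the ring class field of $\mathcal{O}$, of degree $h(\mathcal{O})$ over $M$, so $j(E)\in M$ would force $h(\mathcal{O})=1$.
\end{itemize}

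So the requirement that $E$ be defined over $M$ cannot be met in general by any argument. For example, take $g=1$, $M=\QQ(\sqrt{-5})$, $K$ the Hilbert class field of $M$, and $A$ an elliptic curve over $K$ with CM by $\OM$.

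The paper's proof does not attempt this descent. It stops once the factors $\CC/\varphi(I_n)$ are identified as isogenous elliptic curves with CM by $\OM$, which is all that ``geometrically isogenous'' needs. You should simply drop the replacement step. $E_0$ has a model over the number field $\QQ(j(E_0))$, which lies in the Hilbert class field of $M$. Your rigidity remark then shows that the isogeny between $A_\CC$ and $E_0^g$ is already defined over $\Kbar$.

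Only when $h_M=1$ can $E$ be taken over $\QQ\subseteq M$ as you propose. That case covers $\QQ(i)$, $\QQ(\sqrt{-2})$, $\QQ(\zeta_3)$ and $\QQ(\sqrt{-7})$, the fields used later in the paper.
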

\begin{proof}
    We may write $A(\CC) = \CC^g/\Lambda$ some lattice $\Lambda$ in $\CC^g$.
    Let $\pi \coloneqq \pi_{A,\CC,\an}$ denote the analytic representation of $\End(A)$ with respect to this analytic 
    coordinate system. 
    
    After possibly replacing $A$ by an isogenous abelian variety, we may assume 
    $\iota(M) \cap \End(A) = \iota(\OO_M)$, where $\OO_M$ is the maximal order in $M$.    
    To simplify notation, let us identify $\iota(\OO_M)$ with $\OO_M$.
    The lattice $\Lambda$ is a finitely generated torsion-free $\OO_M$-module under $\pi$.
    By the structure theorem for such modules over Dedekind domains \cite[Prop. 10.2.1]{Over_the_nest}, we may write 
    $\Lambda = \pi(I_1)x_1+\pi(I_2)x_2+\ldots \pi(I_g)x_g$ for some ideals $I_n$ of $\OO_M$ and 
    $x_n \in \CC^g$ for $1 \leq n \leq g$.

    Write $x_n = (x_{n,1},x_{n,2},\ldots,x_{n,g})$.
    Since $A$ is of signature $(g,0)$, there is some embedding $\varphi \colon M \hookrightarrow \CC$ 
    such that for any $\alpha \in \OO_M$,
    \[\pi(\alpha)x_n = (\varphi(\alpha)x_{n,1},\varphi(\alpha)x_{n,2},\ldots,\varphi(\alpha)x_{n,g}).\]
    Thus $\pi(\alpha)x_n = \varphi(\alpha)x_n$ where $\varphi(\alpha) \in \CC$ is a scalar.
    As $\RR \otimes \varphi(I_n) \cong \CC$, we see that 
    $\left(\RR \otimes \pi(I_n)x_n\right)/\pi(I_n)x_n \cong \left(\RR \otimes \varphi(I_n)x_n\right)/\varphi(I_n)x_n \cong \CC/\varphi(I_n)$.

    Define a $\CC$-linear map $T\colon \CC^g \rightarrow \CC^g$ by $x_n \mapsto e_n$ (where $e_n$ is $1$ in the 
    $n$-th component and zero elsewhere).
    As $\Lambda$ is a lattice in $\CC^g$, $x_1,x_2,\ldots,x_g$ are linearly independent over $\CC$ and thus $T$ is 
    invertible. It follows that $T$ induces a biholomorphic map
    \[\CC^g/\Lambda \rightarrow \CC/\varphi(I_1) \times \CC/\varphi(I_2) \times \cdots \times \CC/\varphi(I_g).\]
    The $\CC/\varphi(I_n)$ are isogenous elliptic curves with CM by $\OO_M$ (see for example 
    \cite[Chapter II, \S 1]{silvermanII}, or \cite[Chapter II, Thm. 2]{Shimura_CM_book} and its corollary).
\end{proof}

\begin{corollary}
\label{cor:signature_is_(2,1)}
    Let $M$ be an imaginary quadratic field and $A/K$ be an abelian threefold with endomorphism algebra $\End^0(A)$ isomorphic to $M$.
    Then the signature of $(A,M)$ is $(2,1)$.
\end{corollary}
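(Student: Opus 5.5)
Since $g=3$ and $r+s=3$ with $r\ge s$, the only possible signatures are $(3,0)$ and $(2,1)$. So it suffices to rule out $(3,0)$, and for that I would apply \cref{prop:g0_implies_Eg}.

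First I must check that $A$ has imaginary multiplication by $M$ in the sense of \cref{def:imaginary_multiplication}. The statement only says $\End^0(A)\cong M$ for some quadratic field $M$, and the endomorphism algebra over $K$ could a priori be smaller than the geometric one. Since $\End^0(A)\cong M$, we have a unital algebra embedding $\iota\colon M\hookrightarrow \End^0(A)\subseteq \End^0(A_{KM})$. Here $M$ is imaginary quadratic by hypothesis, and $\iota(M)\subseteq\End^0(A)$ holds tautologically. So the signature of $(A,\iota)$ is defined, and the hypotheses of \cref{prop:g0_implies_Eg} are satisfied.

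Now suppose for contradiction that the signature is $(3,0)$. By \cref{prop:g0_implies_Eg}, $A_{\Kbar}$ is isogenous to $E^3$ for an elliptic curve $E$ with CM by $M$. Then
\[
\End^0(A_{\Kbar})\cong \End^0(E^3)\cong M_3(M),
\]
which is $\QQ$-dimension $18$. I then need to turn this into a contradiction with $\End^0(A)\cong M$, which only constrains endomorphisms over $K$.

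This step is the main obstacle. I would argue via Galois descent. The group $G_K$ acts continuously on the finitely generated group $\End(A_{\Kbar})$, so some finite extension $L/K$ has $\End^0(A_L)=M_3(M)$. In particular $A_L$ is isogenous over $L$ to $E'^3$ for a CM elliptic curve $E'/L$. On the other hand, the invariants under $\Gal(L/K)$ equal $\End^0(A)\cong M$, which is commutative.

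To rule this out, I would consider the centraliser of $\iota(M)$ in $M_3(M)$. Since $\iota(M)$ is fixed by Galois, Galois acts on this centraliser. If $\iota(M)$ acts on $A$ with signature $(3,0)$, then $\iota(M)$ is the centre $M\cdot 1$ of $M_3(M)$: the scalar action of $M$ on $E^3$ has signature $(3,0)$, and an embedding of $M$ into $M_3(M)$ with that signature is conjugate to the scalars. The centraliser is then all of $M_3(M)$, and Galois acts on it by $M$-linear automorphisms, hence, by Skolem–Noether, by inner automorphisms via a cocycle. So the fixed algebra $\End^0(A)$ is a twisted form, i.e. a central simple $M$-algebra $B$ of degree $3$, with $B\otimes_M \Kbar$-type descent, and it has $M$-dimension $9$. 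This contradicts $\End^0(A)\cong M$, so the signature must be $(2,1)$.

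If this descent argument proves delicate, my fallback is to take the statement's hypothesis to mean the geometric endomorphism algebra, as in the paper's setting $\End^0(A_{\Kbar})\cong M$. In that case the contradiction $18\neq 2$ is immediate from \cref{prop:g0_implies_Eg}.
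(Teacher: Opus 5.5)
\textbf{The descent step fails, and it cannot be repaired.} Your reduction to ruling out $(3,0)$ is fine, and so is your observation that signature $(3,0)$ forces $\iota(M)$ to be the centre of $\End^0(A_L)\cong M_3(M)$.

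What comes next is the problem. $\Gal(L/K)$ then acts on $M_3(M)$ by $M$-linear automorphisms, i.e.\ by conjugation by elements $u_\sigma\in\GL_3(M)$. So $\End^0(A)=\End^0(A_L)^{\Gal(L/K)}$ is simply the centraliser of $\{u_\sigma\}$ in $M_3(M)$. This is not an $M$-form of $M_3(M)$. Forms of the same dimension come from actions that are \emph{semilinear} over a field extension with group $\Gal(L/K)$, as for $\Gal(L/K)$ acting on $M_3(L)$. Here the coefficients $M$ are fixed, and $\End^0(A_L)$ is not $\End^0(A)$ with scalars extended. The centraliser can be as small as $M$, and then there is no contradiction.

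In fact the corollary, read literally with $\End^0(A)$ taken over $K$, is false. Here is a counterexample:
\begin{enumerate}[(i)]
\item Let $K$ contain the Hilbert class field of $M$, and let $E/K$ have $\End(E)=\OM$.
\item Let $L/K$ be Galois with group isomorphic to the alternating group on four letters, and let $\xi\colon\Gal(L/K)\to\GL_3(\ZZ)$ be its $3$-dimensional irreducible (rotation) representation.
\item Since $G_K$ acts trivially on $\Aut(E^3_{\Kbar})=\GL_3(\OM)$, $\xi$ is a cocycle. Let $A'$ be the corresponding twist of $E^3$.
\item Then $\End^0(A')\cong\{x\in M_3(M) : \xi(\sigma)x\xi(\sigma)^{-1}=x\}=M$, because $\xi$ is absolutely irreducible.
\item This copy of $M$ is the diagonal CM action. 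It acts on $\Omega^1(A'_{\Kbar})\cong\Omega^1(E_{\Kbar})^3$ through a single embedding, so $(A',M)$ has signature $(3,0)$.
\end{enumerate}

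\textbf{The fallback is the right proof.} The hypothesis must be the geometric one, $\End^0(A_{\Kbar})\cong M$. That is the setting of \cref{prop:endofield} and of the rest of the paper. The paper gives no separate proof of the corollary; it is meant as the immediate consequence of \cref{prop:g0_implies_Eg}, and your fallback is exactly that argument. Signature $(3,0)$ would give $\End^0(A_{\Kbar})\cong M_3(M)$, of $\QQ$-dimension $18\neq 2$.

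You need one more detail. \cref{prop:g0_implies_Eg} assumes $\iota(M)\subseteq\End^0(A)$, i.e.\ imaginary multiplication over the base field. Under the geometric hypothesis you get this by replacing $A$ with $A_{KM}$, using \cref{prop:endofield} (which applies since $g=3$ is odd). This base change affects neither the signature nor the geometric isogeny class.

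Drop the descent argument and present the fallback, with this base change, as the proof.
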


\section{Group theoretic preliminaries}
\label{subsec:group_theory_centralisers}
Let $g \geq 2$ be an integer and $\ell$ a prime number.
In this section, $V$ denotes a $2g$-dimensional vector space over $\FF_\ell$ equipped with a non-degenerate alternating pairing $\langle, \rangle \colon V \times V \rightarrow \FF_\ell$.
We let $\GSp(V)$ denote the similitude group of $\langle, \rangle$.
That is, $\GSp(V)$ is the group of $\FF_\ell$-linear maps $V \rightarrow V$ such that for every $\sigma \in \GSp(V)$, there exists a constant $\chi(\sigma)$ such that for any $v, w \in V$, we have:
\[\langle \sigma v, \sigma w \rangle = \chi(\sigma) \langle v,w\rangle.  \]
It is easy to check that the map $\chi \colon \GSp(V) \rightarrow \FF_\ell^*$ defined by $\sigma \mapsto \chi(\sigma)$ is a homomorphism.
We define $\Sp(V)$ to be the isometry group of $\langle, \rangle$, in other words, the kernel of $\chi$.

We shall now investigate the centralisers of certain elements in $\GSp(V)$ (see Propositions \ref{prop:centralisers_grp_thy_split_case}, \ref{prop:centralisers_grp_thy_inert_case}).
The results presented here are almost certainly well-known to group theorists, however we were unable to find an adequate reference for them.
The reader looking to read more about related questions may wish to consult \cite{Springer_centralisers,wall,OBrienDeFranLiebeck}.

In the following we denote the transpose of a matrix $T$ by $T^t$ and its inverse transpose by $T^{-t}$.

\begin{proposition}
    \label{prop:centralisers_grp_thy_split_case}
    Suppose $\alpha \in \GSp(V)$ has a reducible minimal polynomial $m_\alpha$ of degree 2 with distinct roots, 
    and satisfies $m_\alpha(0) = \chi(\alpha)$. 
    Then the centraliser $C_{\GSp(V)}(\alpha)$ of $\alpha$ in $\GSp(V)$ admits an isomorphism:
    \[\varphi \colon C_{\GSp(V)}(\alpha) \xrightarrow{\sim} \FF_\ell^* \times \GL_g(\ell)\]
    sending $C_{\GSp(V)}(\alpha) \cap \Sp(V)$ to $\{ 1 \} \times \GL_g(\ell)$.
    
    Furthermore, writing $m_\alpha = \phi \phi'$ with $\deg \phi=\deg \phi '=1$ and $V_\phi=\ker(\phi(\alpha))$, $V_{\phi'}=\ker(\phi'(\alpha))$:
    \begin{enumerate}[i)]
        \item \label{part1:splitcentralizer} the natural action of $C_{\GSp(V)}(\alpha)$ on $V$ preserves both $V_\phi$ and $V_{\phi'}$.
        \item \label{part2:splitcentralizer} Both $V_\phi$ and $V_{\phi'}$ are maximal totally isotropic subspaces of $V$.
        \item \label{part3:splitcentralizer} We may take $\varphi$ so that the natural action of $C_{\GSp(V)}(\alpha)$ on $V$ translates via $\varphi$ to the action of $(\mu,T)\in \FF_\ell^* \times \GL_g(\ell) $ on $ V_\phi$ as $\mu T$ and on  $V_{\phi'}$ as $T^{-t}$.

    \end{enumerate}
    Moreover, taking $\varphi$ so that it satisfies $iii)$ and letting $\gamma' \in \GSp(V)$ denote an element which exchanges $V_\phi$ and $V_{\phi'}$, we may extend $\varphi$ to an isomorphism
    \[\langle \gamma', C_{\GSp(V)}(\alpha) \rangle  \xrightarrow{\sim} \left(\FF_\ell^* \times \GL_g(\ell) \right) \rtimes \langle \gamma \rangle\]
    where $\gamma(\mu, T) = (\mu , \mu^{-1}T^{-t})$.    
\end{proposition}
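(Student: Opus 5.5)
Write $m_\alpha=(x-a)(x-b)$ with $a\neq b$, $\phi=x-a$, $\phi'=x-b$, so that $ab=m_\alpha(0)=\chi(\alpha)$. Since $m_\alpha$ has distinct roots, $\alpha$ is diagonalisable and $V=V_\phi\oplus V_{\phi'}$.

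\textbf{Part i).} Any $\sigma$ commuting with $\alpha$ commutes with $\phi(\alpha)$ and $\phi'(\alpha)$. Hence $\sigma$ preserves their kernels $V_\phi$ and $V_{\phi'}$.

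\textbf{Part ii).} For $v,w\in V_\phi$ we have $\langle \alpha v,\alpha w\rangle=a^2\langle v,w\rangle$. The similitude property gives the same quantity as $ab\langle v,w\rangle$. Since $a\neq b$ and $a\neq 0$ (because $ab=\chi(\alpha)\neq 0$), we get $\langle v,w\rangle=0$. The same argument applies to $V_{\phi'}$. So both eigenspaces are totally isotropic, hence each has dimension at most $g$. As they sum to $V$, both have dimension exactly $g$ and are maximal totally isotropic.

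\textbf{Identifying the pairing.} The pairing restricted to $V_\phi\times V_{\phi'}$ is then a perfect pairing, since a vector of $V_\phi$ orthogonal to $V_{\phi'}$ would lie in the radical. Choose a basis $e_1,\dots,e_g$ of $V_\phi$ and the dual basis $f_1,\dots,f_g$ of $V_{\phi'}$, so that $\langle e_i,f_j\rangle=\delta_{ij}$. In this basis the form is the standard one. By i), every $\sigma\in C_{\GSp(V)}(\alpha)$ is block diagonal, $\sigma=\mathrm{diag}(X,Y)$. Conversely, every block diagonal map commutes with $\alpha=\mathrm{diag}(a I,bI)$.

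\textbf{Similitude condition and the isomorphism.} The condition $\langle\sigma e_i,\sigma f_j\rangle=\mu\delta_{ij}$ becomes $X^tY=\mu I$, which holds with $\mu=\chi(\sigma)$. Any such pair is automatically a similitude, because the $V_\phi$--$V_\phi$ and $V_{\phi'}$--$V_{\phi'}$ pairings vanish. So the centraliser is exactly $\{\mathrm{diag}(X,\mu X^{-t})\}$. The map should send this element to $(\mu,\mu^{-1}X)$, so that the action on $V_\phi$ is $\mu T$ and on $V_{\phi'}$ is $\mu X^{-t}=T^{-t}$, as in iii). Explicitly, $\varphi(\sigma)=(\chi(\sigma),\chi(\sigma)^{-1}X)$ is a bijective homomorphism, because $\chi$ is a homomorphism and block multiplication is componentwise. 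Elements of $\Sp(V)$ are those with $\mu=1$, so $C_{\GSp(V)}(\alpha)\cap\Sp(V)$ maps onto $\{1\}\times\GL_g(\ell)$.

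\textbf{The extension by $\gamma'$.} Take $\gamma'$ to be the map $e_i\mapsto f_i$, $f_i\mapsto -e_i$. This lies in $\Sp(V)$, swaps the two eigenspaces, and satisfies $\gamma'^2=-1=\varphi^{-1}(1,-I)$; the statement also allows any other exchanging element. Since $\gamma'$ normalises the set of maps preserving the decomposition $\{V_\phi,V_{\phi'}\}$, it normalises the centraliser, and conjugation by it swaps the two blocks.

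The step I expect to be the main obstacle is this last bookkeeping. One must compute that conjugating $(\mu,T)$, i.e.\ $\mathrm{diag}(\mu T,T^{-t})$, by $\gamma'$ gives $\mathrm{diag}(T^{-t},\mu T)$. Writing this as $\mathrm{diag}(\mu T',T'^{-t})$ with $T'=\mu^{-1}T^{-t}$ checks consistently, and recovers $\gamma(\mu,T)=(\mu,\mu^{-1}T^{-t})$. One must also handle the fact that $\gamma'$ has order $4$ rather than $2$. The clean route is to send $\gamma'$ to $\gamma$ composed with the element $(1,\pm I)$ that makes the square relation match. If the intended statement uses a genuine involution, one instead chooses $\gamma'$ with $\gamma'^2=1$ when possible. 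Otherwise one interprets the semidirect product as generated modulo that central relation, and verifies that the resulting map is well defined and bijective.
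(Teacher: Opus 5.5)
Your proof of parts i)--iii) and of the isomorphism $\varphi$ is correct and follows the paper's argument: isotropic eigenspaces, dual bases, and the block-diagonal similitude condition. Solving $X^tY=\mu I$ directly is slightly cleaner than the paper's appeal to generation of $\GSp(V)$ by $\Sp(V)$ and $\mathrm{diag}(\mu I,I)$, and you also make the count $\dim V_\phi=\dim V_{\phi'}=g$ explicit.

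The gap is in the extension by $\gamma'$. Your $\gamma'\colon e_i\mapsto f_i,\ f_i\mapsto -e_i$ satisfies $\gamma'^2=-1=\varphi^{-1}(1,-I)$, and your proposed repair does not work. For every $s\in\FF_\ell^*$ we have $\bigl((1,sI)\gamma\bigr)^2=(1,sI)\,\gamma(1,sI)=(1,sI)(1,s^{-1}I)=(1,I)$. When $\ell$ is odd this differs from $(1,-I)$, so $\gamma'\mapsto(1,\pm I)\gamma$ is not a homomorphism. ``Interpreting the semidirect product modulo a central relation'' changes the claimed group rather than proving the claim. You also cannot find the needed involution inside $\Sp(V)$. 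A block-swapping element of $\Sp(V)$ has the form $\left(\begin{smallmatrix}0&-C^{-t}\\ C&0\end{smallmatrix}\right)$, and its square is $1$ iff $C^t=-C$. For invertible $C$ this is impossible when $g$ and $\ell$ are both odd, e.g.\ $g=3$.

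The missing idea is to use a similitude of multiplier $-1$. The map $w\colon e_i\mapsto f_i,\ f_i\mapsto e_i$ satisfies $\langle we_i,wf_j\rangle=-\delta_{ij}$ and $w^2=1$. Moreover $w=\mathrm{diag}(-I,I)\,\gamma'=\varphi^{-1}(-1,I)\,\gamma'$. So the correct assignment is $\gamma'\mapsto(-1,I)\gamma$, whose square is $(-1,I)(-1,-I)=(1,-I)$, as required.

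The statement concerns an arbitrary exchanging $\gamma'$, not one you get to choose, so you also need the following. For any such $\gamma'$, the element $\gamma'^{-1}w\in\GSp(V)$ preserves both eigenspaces, hence is block diagonal and commutes with $\alpha$. Thus $w\in\gamma'\,C_{\GSp(V)}(\alpha)$ and $\langle\gamma',C_{\GSp(V)}(\alpha)\rangle=C_{\GSp(V)}(\alpha)\rtimes\langle w\rangle$. Conjugation by $w$ sends $\mathrm{diag}(\mu T,T^{-t})$ to $\mathrm{diag}(T^{-t},\mu T)=\varphi^{-1}(\mu,\mu^{-1}T^{-t})$. This is exactly the paper's argument: it multiplies $\gamma'$ by centraliser elements until it becomes $\left(\begin{smallmatrix}0&I\\ I&0\end{smallmatrix}\right)$.
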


\begin{proof}

    Suppose first that $m_\alpha$ is reducible.
    By assumption we may write $m_\alpha(x) = (x-a)(x-\chi(\alpha)a^{-1})$ for some $a \in \FF_\ell^*$ with $a \neq \chi(\alpha)a^{-1}$.
    Let $v,w  \in V$ be eigenvectors of $\alpha$ with eigenvalues $a,b$ respectively.
    We have 
    \[\chi(\alpha)b^{-1}\langle v, w \rangle=\chi(\alpha)\langle v, \alpha^{-1}w \rangle = \langle \alpha v,w \rangle =a \langle v, w \rangle .\]
    Thus either $\langle v,w \rangle =0$ or $b=\chi(\alpha)a^{-1}$.
    Hence we see that the restriction of $\langle, \rangle$ to the eigenspace $V_a \coloneqq \ker(\alpha -a)$ is trivial.
    Similarly,  the restriction of $\langle, \rangle$ to the eigenspace $V_{\chi(\alpha)a^{-1}} \coloneqq \ker(\alpha - \chi(\alpha)a^{-1})$ is trivial.

    It follows that we may take a basis $e_1, \ldots , e_g$ of $V_a$ such that $\langle e_i,e_j \rangle =0$ for all $i,j$ and a basis of $f_1, \ldots , f_g$ of $V_{\chi(\alpha)a^{-1}}$ such that $\langle f_i,f_j \rangle =0$ and $\langle e_i,f_j \rangle =\delta_{ij}$ for all $i,j$ where $\delta_{ij}=1$ if $i=j$ and $0$ otherwise.

    Let $\sigma \in C_{\GSp(V)}(\alpha)$.
    Note that as $\sigma$ commutes with $\alpha$ it preserves its eigenspaces.
    Write $T \in \GL_g(\ell)$ (resp. $S \in \GL_g(\ell)$) for the matrix obtained by letting $\sigma$ act on the basis $e_1, \ldots , e_g$ (resp. $f_1, \ldots , f_g$).
    The condition that $\sigma$ is a similitude of $\langle, \rangle$ with similitude factor $\chi(\sigma)$ may then be expressed as:
    \[ \begin{pmatrix}
        T^t &0\\
        0 & S^t
    \end{pmatrix} \begin{pmatrix}
        0 &I_g\\
        -I_g & 0
    \end{pmatrix}\begin{pmatrix}
        T &0\\
        0 & S
    \end{pmatrix}= \chi(\sigma)\begin{pmatrix}
        0 &I_g\\
        -I_g & 0
    \end{pmatrix}.\]
    From which it follows that if $\chi(\sigma)=1$, then $S=T^{-t}$.
    In fact, we deduce that the map $\GL_g(\ell) \rightarrow C_{\GSp(V)}(\alpha) \cap \Sp(V)$ defined by $X \mapsto \left(\begin{smallmatrix}
        X &0\\
        0 & X^{-t}
    \end{smallmatrix}\right)$ is an isomorphism.
    Let $\mu \in \FF_\ell^*$.
    Observe that
    \[\begin{pmatrix}
        \mu I_g &0\\
        0 & I_g
    \end{pmatrix} \begin{pmatrix}
        0 &I_g\\
        -I_g & 0
    \end{pmatrix}\begin{pmatrix}
        \mu I_g &0\\
        0 & I_g
    \end{pmatrix}= \mu\begin{pmatrix}
        0 &I_g\\
        -I_g & 0
    \end{pmatrix}.\]
    Since $\GSp(V)$ is generated by $\Sp(V)$ and the linear map defined by $e_i \mapsto \mu e_i$, $f_i \mapsto f_i$, we deduce $C_{\GSp(V)}(\alpha) \cong \FF^*_\ell \times \GL_g(\ell)$.

    We shall now prove the final statement.
    Let us write $\gamma'$ as $\left(\begin{smallmatrix}
        0 & T' \\
        S' & 0
    \end{smallmatrix}\right)$ with respect to the basis $e_1, \ldots ,e_g,f_1, \ldots ,f_g$.
    After possibly multiplying $\gamma'$ by a matrix of the form $\left(\begin{smallmatrix}
        \mu I_g &0\\
        0 & I_g
    \end{smallmatrix}\right)$, we may assume the following equation holds:
    \[ \begin{pmatrix}
        0 & (S')^t\\
        (T')^t & 0
    \end{pmatrix} \begin{pmatrix}
        0 &I_g\\
        -I_g & 0
    \end{pmatrix}\begin{pmatrix}
        0 & T'\\
        S' & 0
    \end{pmatrix}= \begin{pmatrix}
        0 &-I_g\\
        I_g & 0
    \end{pmatrix}\]
    from which we deduce $S' = (T')^{-t}$.
    Observing that conjugation by
    \[\begin{pmatrix}
        (T')^{-1} & 0 \\
         0 & (T')^{t} 
    \end{pmatrix}\begin{pmatrix}
        0 & T' \\
        (T')^{-t} & 0
    \end{pmatrix}=\begin{pmatrix}
        0 & I_g \\
        I_g & 0
    \end{pmatrix}\]
    satisfies 
     \[ \begin{pmatrix}
         0 & I_g\\
        I_g  & 0
    \end{pmatrix} \begin{pmatrix}
        \mu T & 0\\
        0 & T^{-t}
    \end{pmatrix}\begin{pmatrix}
        0 & I_g\\
        I_g & 0
    \end{pmatrix}= \begin{pmatrix}
        T^{-t} & 0 \\
        0 & \mu T
    \end{pmatrix}= \begin{pmatrix}
        \mu (\mu T)^{-t} & 0 \\
        0 & (\mu T)
    \end{pmatrix}\]
    completes the proof.
\end{proof}

We shall now investigate the case where $m_\alpha$ is irreducible of degree 2.
Note that the characteristic polynomial of $\alpha$ is necessarily $m_\alpha^g$, since it is defined over $\FF_\ell$, has degree $2g$, and its only possible irreducible factor over $\FF_\ell$ is $m_\alpha$.
We shall then use the isomorphism $\FF_\ell[\alpha] \cong \FF_\ell[x]/\langle m_\alpha \rangle \cong \FF_{\ell^2}$ to view $V$ as a $g$-dimensional vector space over $\FF_{\ell^2}$.

The elements in $C_{\GSp(V)}(\alpha)$ preserve this structure on $V$.
We shall shortly prove that they in fact preserve a non-degenerate Hermitian form on $V$ up to similitude.
Our approach is inspired by \cite[Section 2]{chi_Mumford_Tate}, which was in turn inspired by  \cite[Lemmas 4.6, 4.7]{Deligne_Hodge_on_AVs}.

We let $\Tr$ denote the trace map $\Tr \colon \FF_{\ell^2} \rightarrow \FF_\ell$.

\begin{lemma}
\label{lem:unique_Hermitian_form}
    Suppose $\alpha \in \GSp(V)$ has an irreducible minimal polynomial $m_\alpha$ of degree 2 satisfying $m_\alpha(0) = \chi(\alpha)$.

    Let $\xi \in \FF_{\ell}[\alpha]^*$ be such that $\xi^{\ell-1}=-1$, i.e., $\Tr(\xi) = 0$.
    Then there exists a unique non-degenerate $\FF_{\ell}[\alpha]$-Hermitian form $\langle , \rangle_\# \colon V \times V \rightarrow \FF_{\ell}[\alpha]$ such that $\langle v , w \rangle = \Tr(\xi \langle v, w \rangle_{\#})$ for all $v,w \in V$.
\end{lemma}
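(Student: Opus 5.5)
The plan is to first pin down how $\alpha$ interacts with the pairing, then build $\langle,\rangle_\#$ by non-degeneracy of the trace form, and finally check each required property.

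Write $F=\FF_\ell[\alpha]\cong\FF_{\ell^2}$ and let $c\mapsto \bar c$ denote the nontrivial automorphism of $F/\FF_\ell$, i.e.\ the Frobenius $c\mapsto c^\ell$.

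\textbf{Step 1: adjoint of $\alpha$.} Since $m_\alpha$ is irreducible with constant term $\chi(\alpha)$, the roots of $m_\alpha$ are $\alpha$ and $\bar\alpha$, so $\alpha\bar\alpha=\chi(\alpha)$ in $F$. From $\langle \alpha v,\alpha w\rangle=\chi(\alpha)\langle v,w\rangle$ we get
\[\langle \alpha v,w\rangle=\chi(\alpha)\langle v,\alpha^{-1}w\rangle=\langle v,\bar\alpha w\rangle.\]
By linearity this extends to $\langle cv,w\rangle=\langle v,\bar c w\rangle$ for all $c\in F$, because $F$ is spanned over $\FF_\ell$ by $1$ and $\alpha$ and $c\mapsto\bar c$ is $\FF_\ell$-linear.

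\textbf{Step 2: construction.} The trace form $(x,y)\mapsto\Tr(xy)$ on $F$ is non-degenerate because $F/\FF_\ell$ is separable. Hence for fixed $v,w$ there is a unique $h(v,w)\in F$ with $\Tr(c\,h(v,w))=\langle c v,w\rangle$ for all $c\in F$, since the right-hand side is $\FF_\ell$-linear in $c$. Put $\langle v,w\rangle_\#:=\xi^{-1}h(v,w)$. Taking $c=1$ gives $\langle v,w\rangle=\Tr(\xi\langle v,w\rangle_\#)$.

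\textbf{Uniqueness.} Suppose $\Tr(\xi H(v,w))=\langle v,w\rangle$ for some $F$-sesquilinear $H$, linear in the first variable. Then $\Tr(c\xi H(v,w))=\langle cv,w\rangle$ for all $c$, so $\xi H(v,w)=h(v,w)$ by non-degeneracy of the trace form. This forces $H=\langle,\rangle_\#$.

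\textbf{Step 3: properties.}
\begin{itemize}
\item \emph{Linearity in the first variable.} Since $h(dv,w)$ represents $c\mapsto\langle cdv,w\rangle$, we get $h(dv,w)=d\,h(v,w)$.
\item \emph{Conjugate-linearity in the second variable.} By Step 1, $\langle cv,dw\rangle=\langle \bar d cv,w\rangle$, so $h(v,dw)=\bar d\,h(v,w)$.
\item \emph{Hermitian symmetry.} Using alternation, Step 1, and the identities $\Tr(x)=\Tr(\bar x)$ and $\bar\xi=-\xi$,
\[\Tr(c\,\xi\langle w,v\rangle_\#)=\langle cw,v\rangle=-\langle v,cw\rangle=-\langle \bar c v,w\rangle=-\Tr(\bar c\,\xi\langle v,w\rangle_\#)=\Tr\bigl(c\,\xi\,\overline{\langle v,w\rangle_\#}\bigr).\]
Non-degeneracy of the trace form then gives $\langle w,v\rangle_\#=\overline{\langle v,w\rangle_\#}$.
\item \emph{Non-degeneracy.} If $\langle v,\cdot\rangle_\#\equiv0$, then $\langle v,\cdot\rangle\equiv 0$, so $v=0$ because $\langle,\rangle$ is non-degenerate.
\end{itemize}

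The only delicate point is the sign bookkeeping in the symmetry step. The condition $\Tr(\xi)=0$ (equivalently $\bar\xi=-\xi$) is exactly what converts the alternating property of $\langle,\rangle$ into Hermitian, rather than skew-Hermitian, symmetry of $\langle,\rangle_\#$. Everything else is routine linear algebra.
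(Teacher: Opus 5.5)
Your proof is correct and follows essentially the same route as the paper. You derive the adjoint relation $\langle \alpha v, w\rangle = \langle v, \bar\alpha w\rangle$, use trace duality to produce a sesquilinear form that is then scaled by $\xi^{-1}$, get Hermitian symmetry from alternation together with $\bar\xi=-\xi$, and get uniqueness from linearity in the first variable and non-degeneracy of the trace. The only difference is cosmetic: you build the form pointwise from the non-degenerate trace pairing on $\FF_{\ell^2}$, whereas the paper obtains it via the tensor product $V\otimes_{\FF_{\ell^2}}\overline{V}$ and Deligne's trace lemma.
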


\begin{proof}
We begin by noting that we may equip $V$ with a second structure of an $\FF_{\ell^2}$-vector space.
Indeed, the automorphism $\FF_{\ell^2} \rightarrow \FF_{\ell^2}$ given by $x \mapsto x^\ell$ allows us to equip $V$ with a $\FF_{\ell^2}$-vector space structure where $\alpha$ acts by $\overline{\alpha} \coloneqq \alpha^\ell$.
We write $\overline{V}$ for $V$ viewed with this alternative $\FF_{\ell^2}$ structure.

Let us write $\langle , \rangle'$ for $\langle , \rangle$ viewed as an $\FF_\ell$-bilinear form $ V \times \overline{V} \rightarrow \FF_\ell$.
Note that 
\[\langle \alpha v, w\rangle' =\langle \alpha v, w\rangle = \langle v, \chi(\alpha)\alpha^{-1} w \rangle = \langle v, \overline{\alpha} w\rangle = \langle v, \alpha w\rangle '\]
where in the above we have used that $m_\alpha$ is irreducible of degree 2 and $m_\alpha(0) = \chi(\alpha)$.
By the universal property of the tensor product, we find that $\langle, \rangle'$ determines an element of $\Hom_{\FF_\ell}(V \otimes_{\FF_{\ell^2}} \overline{V}, \FF_\ell)$.
Postcomposing by $\Tr$ yields an isomorphism $\Hom_{\FF_{\ell^2}}(V \otimes_{\FF_{\ell^2}} \overline{V}, \FF_{\ell^2}) \rightarrow\Hom_{\FF_\ell}(V \otimes_{\FF_{\ell^2}} \overline{V}, \FF_\ell)$ \cite[Lemma 4.3]{Deligne_Hodge_on_AVs}.
We thus deduce that there exists a unique non-degenerate $\FF_{\ell^2}$-bilinear form $\langle,\rangle'_0 \colon V \times \overline{V} \rightarrow\FF_{\ell^2}$ such that $\Tr\langle v,w\rangle'_0=\langle v,w\rangle'$ for all $v \in V, w \in \overline{V}$.
Correspondingly, we see that there is a unique non-degenerate sesquilinear form $\langle,\rangle_0 \colon V \times V \rightarrow\FF_{\ell^2}$ such that $\Tr\langle v,w\rangle_0=\langle v,w\rangle$ for all $v \in V, w \in V$.

Define $\langle, \rangle_\# \coloneqq \xi^{-1} \langle, \rangle_0$.
To prove $\langle, \rangle_\# \colon V \times V \rightarrow \FF_{\ell^2}$ is a non-degenerate Hermitian form it remains to prove that $\overline{\langle v, w \rangle }_\#=\langle w,v \rangle_\#$.
To this end, consider
\[\Tr (\xi \langle v,w \rangle_\# )=\langle v,w \rangle=-\langle w,v \rangle=\Tr (\xi \overline{\langle w,v \rangle}_\# ).\]
Replacing $v$ by $\xi^{-1} av$ with $a \in \FF_{\ell^2}$, we find $\Tr (a \langle v,w \rangle_\# )=\Tr (a\overline{\langle w,v \rangle}_\# )$.
Since multiplication followed by $\Tr$ yields a non-degenerate bilinear map $ \FF_{\ell^2} \times \FF_{\ell^2} \rightarrow \FF_\ell$ and the previous equality holds for all $a \in \FF_{\ell^2}$, we deduce $\langle v,w \rangle_\# = \overline{\langle w,v \rangle}_\#$ as required.

If there exist two such Hermitian forms, then their difference is also a Hermitian form $\langle,\rangle_!$, and it satisfies $0 = \Tr( \xi \langle v,w \rangle_!)$ for all $v,w \in V$. Since $\Tr(\xi) = 0$, this can only happen if $\langle , \rangle_!$ is $\FF_\ell$-valued. Since it is $\FF_{\ell^2}$-linear in the first variable, we deduce that $\langle , \rangle_! = 0$. This proves uniqueness.
\end{proof}

We let $\CU(V)$ denote the similitude group of $\langle, \rangle_{\#}$ and $\GU(V)$ its isometry group.
The similitude group $\CU(V)$ is generated by $\GU(V)$ and the invertible scalar matrices over $\FF_{\ell^2}$ \cite[(2.3.3)]{KleidmanLiebeck}.

\begin{proposition}
\label{prop:centralisers_grp_thy_inert_case}    
    Suppose $\alpha \in \GSp(V)$ has an irreducible minimal polynomial $m_\alpha$ of degree 2 satisfying $m_\alpha(0) = \chi(\alpha)$.
    Then the centraliser $C_{\GSp(V)}(\alpha)$ admits an isomorphism:
    \[C_{\GSp(V)}(\alpha) \cong \CU(V)\]
    sending $C_{\GSp(V)}(\alpha) \cap \Sp(V)$ to $ \GU(V)$ where the $\FF_{\ell^2}$-vector space structure on $V$ is given by the natural action of $\FF_\ell[\alpha]$.
\end{proposition}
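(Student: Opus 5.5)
The plan is to show that, inside $\GL_{\FF_\ell}(V)$, the centraliser $C_{\GSp(V)}(\alpha)$ is \emph{literally equal} to the similitude group $\CU(V)$ of the Hermitian form $\langle,\rangle_\#$ from \cref{lem:unique_Hermitian_form}. Fix such a $\xi$ and form. Recall that a $\CU(V)$-element is an $\FF_{\ell^2}$-linear map $\sigma$ with $\langle \sigma v,\sigma w\rangle_\# = \lambda(\sigma)\langle v,w\rangle_\#$ for some $\lambda(\sigma)\in \FF_{\ell^2}^*$. Hermitian symmetry then forces $\lambda(\sigma)=\overline{\lambda(\sigma)}$, so in fact $\lambda(\sigma)\in\FF_\ell^*$.

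First I show $C_{\GSp(V)}(\alpha)\subseteq \CU(V)$. Let $\sigma\in C_{\GSp(V)}(\alpha)$. Since $\sigma$ commutes with $\alpha$, it is $\FF_\ell[\alpha]=\FF_{\ell^2}$-linear. Define
\[\langle v,w\rangle_\sigma \coloneqq \chi(\sigma)^{-1}\langle \sigma v,\sigma w\rangle_\#.\]
This is again a non-degenerate Hermitian form, because $\sigma$ is $\FF_{\ell^2}$-linear and bijective and $\chi(\sigma)\in\FF_\ell^*$. It also satisfies
\[\Tr(\xi\langle v,w\rangle_\sigma)=\chi(\sigma)^{-1}\langle\sigma v,\sigma w\rangle=\langle v,w\rangle.\]
The uniqueness part of \cref{lem:unique_Hermitian_form} then gives $\langle,\rangle_\sigma=\langle,\rangle_\#$. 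Hence $\sigma\in\CU(V)$ with $\lambda(\sigma)=\chi(\sigma)$.

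Next I show the reverse inclusion. Let $\sigma\in\CU(V)$ with multiplier $\lambda=\lambda(\sigma)\in\FF_\ell^*$. Then $\sigma$ is $\FF_{\ell^2}$-linear, so it commutes with $\alpha$, since $\alpha$ acts as a scalar in $\FF_{\ell^2}$. Applying $\Tr(\xi\,\cdot)$ and using $\FF_\ell$-linearity of the trace gives
\[\langle\sigma v,\sigma w\rangle=\Tr(\xi\lambda\langle v,w\rangle_\#)=\lambda\langle v,w\rangle.\]
Hence $\sigma\in\GSp(V)$ with $\chi(\sigma)=\lambda$, and so $\sigma\in C_{\GSp(V)}(\alpha)$. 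Thus the two groups coincide, and the identity map is the required isomorphism.

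Finally, the proof of the first inclusion shows that the multipliers agree: $\chi=\lambda$ on the common group. So $\chi(\sigma)=1$ exactly when $\sigma$ is an isometry of $\langle,\rangle_\#$, and $C_{\GSp(V)}(\alpha)\cap\Sp(V)$ corresponds to $\GU(V)$.

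The only delicate point is that the multiplier of a $\CU(V)$-element lies in $\FF_\ell$, which is what allows it to be pulled through $\Tr$. It follows immediately from Hermitian symmetry, as noted at the start. Everything else reduces to the uniqueness statement already proved in \cref{lem:unique_Hermitian_form}, so I expect no real obstacle.
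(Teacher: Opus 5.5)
Your proof is correct. It uses the same key ingredient as the paper, the uniqueness part of \cref{lem:unique_Hermitian_form}, but it handles the similitude level differently.

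The paper applies uniqueness only to isometries, which gives $C_{\GSp(V)}(\alpha)\cap\Sp(V)=\GU(V)$. It then gets the full statement in three further steps. It quotes the fact that $\CU(V)$ is generated by $\GU(V)$ and the scalars $\FF_{\ell^2}^*$. It checks that a scalar $a$ is a symplectic similitude with multiplier $\Nm(a)$. Finally it closes with a cardinality count, using surjectivity of the norm and $[\GSp(V):\Sp(V)]=\ell-1$.

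You instead apply uniqueness directly to the rescaled form $\chi(\sigma)^{-1}\langle\sigma\cdot,\sigma\cdot\rangle_\#$. You prove the reverse inclusion by pulling the multiplier through $\Tr$, having first observed via Hermitian symmetry that multipliers of $\CU(V)$ lie in $\FF_\ell^*$.

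Your route is self-contained: it needs neither the Kleidman--Liebeck generation result nor a counting argument. It also proves slightly more, namely literal equality of the two subgroups of $\GL_{\FF_\ell}(V)$ with matching multiplier characters ($\chi=$ the Hermitian multiplier). That is exactly what makes the correspondence between $C_{\GSp(V)}(\alpha)\cap\Sp(V)$ and $\GU(V)$ immediate. The paper's route is shorter, because it leans on the standard structure of $\CU$.
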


\begin{proof}
    We shall keep the notation introduced in the statement of \cref{lem:unique_Hermitian_form}.
    Let $ \sigma \in \Aut_{\FF_\ell[\alpha]}(V) = C_{\GL(V)}(\alpha)$.
    We may thus define the map $(,) \colon V \times V \rightarrow\FF_{\ell^2}$ via $(v,w) \coloneqq \langle \sigma v, \sigma w \rangle_\#$ for any $v,w \in V$.
    It is easy to see that $(,)$ is a Hermitian pairing on $V$.
    Note that 
    \[\Tr(\xi \langle \sigma v, \sigma w \rangle_\#)=\Tr(\xi \langle v, w \rangle_\#) \text{ if and only if } \langle \sigma v, \sigma w \rangle =\langle v,  w \rangle.\]
    In particular, we see that $\GU(V) \hookrightarrow C_{\GSp(V)}(\alpha) \cap \Sp(V)$.
    Moreover, the uniqueness statement in Lemma \ref{lem:unique_Hermitian_form} shows that if $\sigma \in \Sp(V)$, then $\langle \sigma v, \sigma w \rangle_\# =\langle v, w \rangle_\#$.
    Hence $\GU(V) \cong C_{\GSp(V)}(\alpha) \cap \Sp(V)$.

    Now let $a \in \FF_{\ell^2}^*$ and by abuse of notation write $a$ again for the corresponding scalar matrix in $\CU(V)$.
    We have $\langle av,aw\rangle=\Tr(\xi \langle a v, a w \rangle_\#)=\Tr(\xi \Nm(a) \langle v, w \rangle_\#)=\Nm(a)\Tr(\xi \langle v, w \rangle_\#)=\Nm(a) \langle v, w \rangle$.
    We deduce that $\CU(V) \hookrightarrow C_{\GSp(V)}(\alpha)$.
    Moreover, since the norm is surjective, we find that this injection is in fact an isomorphism since the index of $\Sp(V)$ in $\GSp(V)$ is $\ell-1$.
\end{proof}

\section{Residual representations of IM abelian threefolds}
\label{sec:residual_reps}

\subsection{Restrictions on the residual images}
\label{subsec:restrictions_on_residual_images}
Let $A/K$ be a polarised abelian variety of dimension $g \geq 2$ with potential imaginary multiplication by $M$.
The preimage $\OO$ of $\End(A_{KM})$ under $\iota$ is well-known to be an order in $M$.
The restrictions on the action of Galois on $A[\ell]$ is influenced by whether or not $\ell$ divides the degree of our fixed polarisation $A \rightarrow A^\vee$ or the index of $\OO$ in the maximal order of $M$.
In \cref{subsection:inertia_at_ell}, it will be important to restrict to primes $\ell$ such that $A$ has semistable reduction at the primes lying over $\ell$ in $M$.
For these reasons, we introduce the following definition:

\begin{definition}
    We call a prime $\ell$ \emph{admissible} for $A$ if it satisfies all the conditions below:
    \begin{enumerate}
        \item \label{item:excludepolarisationdivisors} $\ell$ does not divide the degree of the fixed polarisation.
        \item \label{item:excluderamifiedprimes} $\ell$ does not divide the discriminant of $\OO \coloneqq \iota^{-1}\left(\End(A_{KM})\right)$.
    \end{enumerate}
    If in addition to the above, $A$ has semistable reduction at every prime above $\ell$ in $M$, we say $\ell$ is a \emph{semistable admissible} prime.
\end{definition}

For the rest of this section we assume $\ell$ is an admissible prime.
Recall that the absolute Galois group $G_{K} \coloneqq \Gal(\overline{K}/K)$ acts on the $\ell$-torsion of $A$, yielding a representation
\[\rho_\ell \colon G_{K} \rightarrow \Aut(A[\ell]).\]
Since $\ell$ is coprime to the degree of our fixed polarisation $A \rightarrow A^\vee$, there exists a non-degenerate alternating pairing on $A[\ell]$, commonly referred to as the Weil pairing.
The action of $\OO$ on $A[\ell]$ commutes with that of $\rho_\ell(G_{KM})$.

For a prime ideal $\lambda$ lying above $\ell$, we write \[A[\lambda] \coloneqq \{P \in A(\overline{K})| \iota(\alpha)P = 0 \,\forall \alpha \in \lambda\}.\]
Clearly $A[\lambda]$ is a vector space over $\FF_\lambda \coloneqq \OO/\lambda$ and by \cite[Prop. 2.2.1]{ribet_RM} its dimension over $\FF_\lambda$ is $g$.
The absolute Galois group $G_{KM}$ acts on $A[\lambda]$ again yielding a representation
\[\rho_\lambda \colon G_{KM} \rightarrow \Aut(A[\lambda]).\]
Note that as $G_{KM}$ is a normal subgroup of $G_K$, for any $\sigma \in G_K$ we may define a representation $^\sigma \rho_{\lambda}$ of $G_{KM}$ by $^\sigma \rho_{\lambda}(\tau) \coloneqq \rho_{\lambda}(\sigma \tau \sigma^{-1})$.
In the following theorem we describe $^\sigma \rho_{\lambda}$ and give upper bounds on the possible images of $\rho_\ell$ and $\rho_\lambda$.

Conserving the notation of \cref{subsec:group_theory_centralisers}, we further let $\GammaU_g(V)$ denote the group of semisimilitudes of $\langle, \rangle _\#$ on $V$ \cite[Pg. 11]{KleidmanLiebeck}.

\begin{theorem}
    \label{thm:modellimagesarecontainedin}
    Let $\ell$ be an admissible prime for $A$.
    Suppose that either $\ell$ is odd, or $2$ is inert in $M$. 
    
    If $\ell$ splits in $M$ as $\lambda \lambdabar$, then  $\rho_\ell(G_{KM}) \subseteq \FF_{\ell}^* \times \GL_g(\ell)$, $\rho_\lambda(G_{KM}) \subseteq \GL_g(\ell)$ and $\rho_\ell(G_{KM(\zeta_\ell)}) \subseteq \GL_g(\ell)$.
    If, furthermore, $M \cong \End^0(A_{KM})$, 
    then  $\rho_\ell(G_K) \subseteq \left( \FF_{\ell}^* \times \GL_g(\ell) \right) \rtimes \langle \gamma \rangle$ where $\gamma$ acts by $(\mu, T) \mapsto (\mu, \mu^{-1}T^{-t})$ and for any $\sigma \in G_K\setminus G_{KM}$ we have $^\sigma\rho_\lambda=\rho_{\Bar{\lambda}}$.

    If $\ell$ is inert in $M$, then $\rho_\ell(G_{KM}) \subseteq \CU_g(\ell)$ and $\rho_\ell(G_{KM(\zeta_\ell)}) \subseteq \GU_g(\ell)$.
    If, furthermore, $M \cong \End^0(A_{KM})$, then $\rho_\ell(G_K) \subseteq \GammaU_g(\ell)$.
\end{theorem}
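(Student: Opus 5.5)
Everything reduces to the centraliser computations of \cref{subsec:group_theory_centralisers}, applied with $V=A[\ell]$, the Weil pairing $\langle,\rangle$ (non-degenerate since $\ell$ is admissible), and a well-chosen element $\alpha \in \OO$ acting on $A[\ell]$. Since $\ell\nmid\disc(\OO)$, we have $\OO/\ell\OO \cong \FF_\ell\times\FF_\ell$ when $\ell$ splits and $\OO/\ell\OO\cong \FF_{\ell^2}$ when $\ell$ is inert.

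\textbf{Choice of $\alpha$.} Pick $\alpha\in\OO$ whose image generates $\OO/\ell\OO$ over $\FF_\ell$. Its minimal polynomial on $A[\ell]$ is then the reduction mod $\ell$ of $x^2-\Tr(\alpha)x+\Nm(\alpha)$. This polynomial has distinct roots in the split case and is irreducible in the inert case. The hypothesis ``$\ell$ odd or $2$ inert'' is exactly what guarantees such an $\alpha$ exists with separable reduction: for $\ell=2$ split, $\FF_2\times\FF_2$ works, but we also need $\alpha$ to lie in $\GSp$, which we check next.

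\textbf{Step 1: $\alpha\in\GSp$ with $\chi(\alpha)=m_\alpha(0)$.} The Rosati involution of the polarisation restricts to complex conjugation on $M$. This is the step I expect to be the main obstacle. To establish it, I would use that the Rosati involution is positive and preserves $\iota(M)$, which is a CM (imaginary quadratic) field, so it must act as conjugation. It follows that $\langle \alpha v,w\rangle=\langle v,\bar\alpha w\rangle$, where $\bar\alpha$ is the conjugate endomorphism, so
\[
\langle\alpha v,\alpha w\rangle=\Nm(\alpha)\langle v,w\rangle,
\qquad
\Nm(\alpha)=m_\alpha(0).
\]
We also need $\Nm(\alpha)\not\equiv 0 \pmod{\ell}$. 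In the inert case this is automatic. In the split case we choose $\alpha$ to be a unit mod both $\lambda$ and $\bar\lambda$ with distinct residues; for odd $\ell$, or via the CRT, this is possible. I expect the $\ell=2$ split case to fail precisely here, because both residues would have to be $1$, which explains the hypothesis.

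\textbf{Step 2: images over $KM$.} Galois commutes with $\OO$ over $KM$, so $\rho_\ell(G_{KM})\subseteq C_{\GSp(A[\ell])}(\alpha)$.

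In the split case, \cref{prop:centralisers_grp_thy_split_case} identifies this centraliser with $\FF_\ell^*\times\GL_g(\ell)$. Here $V_\phi$ and $V_{\phi'}$ are $A[\lambda]$ and $A[\bar\lambda]$, so $\rho_\lambda$ is the action on $V_\phi$, which gives $\mu T\in\GL_g(\ell)$. Moreover $\chi\circ\rho_\ell$ is the mod-$\ell$ cyclotomic character, and the isomorphism sends $\Sp$ to $\{1\}\times\GL_g$. Hence $G_{KM(\zeta_\ell)}$ lands in $\GL_g(\ell)$.

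In the inert case, \cref{prop:centralisers_grp_thy_inert_case} gives $\CU_g(\ell)$, and the same cyclotomic argument gives $\GU_g(\ell)$.

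\textbf{Step 3: images over $K$ when $\End^0(A_{KM})\cong M$.} Here $\OO$ is the full endomorphism ring, and for $\sigma\in G_K\setminus G_{KM}$ conjugation by $\sigma$ acts on it by $\iota(a)^\sigma=\iota(\bar a)$. The reason is that the action factors through $\Gal(KM/K)$ by \cref{prop:endofield}, and it is nontrivial since otherwise the endomorphisms would be defined over $K$, contradicting the minimality in \cref{prop:endofield}. Thus $\sigma$ normalises $\OO$ acting on $A[\ell]$ and satisfies $\rho_\ell(\sigma)\alpha\rho_\ell(\sigma)^{-1}=\bar\alpha$.

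In the split case, $\rho_\ell(\sigma)$ therefore swaps $A[\lambda]$ and $A[\bar\lambda]$. It is an element $\gamma'$ as in the last part of \cref{prop:centralisers_grp_thy_split_case}, which yields the semidirect product containment. Moreover $\sigma(A[\lambda])=A[\bar\lambda]$ shows that $^\sigma\rho_\lambda\cong\rho_{\bar\lambda}$.

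In the inert case, $\rho_\ell(\sigma)$ is $\FF_{\ell^2}$-semilinear with respect to Frobenius. It scales $\langle,\rangle$ by the cyclotomic character, and applying the uniqueness in \cref{lem:unique_Hermitian_form} to the transported form shows that it preserves $\langle,\rangle_\#$ up to a scalar and conjugation. Hence $\rho_\ell(\sigma)\in\GammaU_g(\ell)$.
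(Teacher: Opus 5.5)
Your proposal is correct and follows essentially the same route as the paper. Both proofs choose an $\alpha\in\OO$ with separable reduction whose norm is a unit mod $\ell$; you use CRT where the paper shifts a generator of $\OO$ by an integer, and both trace the $\ell=2$ split exclusion to $\Nm(\alpha)$. Both then use the Rosati identity $\langle\alpha v,\alpha w\rangle=\Nm(\alpha)\langle v,w\rangle$, the centraliser computations of \cref{prop:centralisers_grp_thy_split_case,prop:centralisers_grp_thy_inert_case} combined with the cyclotomic similitude factor, and the relation $\rho_\ell(\sigma)\iota(\alpha)\rho_\ell(\sigma)^{-1}=\iota(\bar\alpha)$ for $\sigma\in G_K\setminus G_{KM}$.

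Your uniqueness argument via \cref{lem:unique_Hermitian_form} fills in the inert case over $K$, which the paper simply calls clear. The one divergence is that you invoke \cref{prop:endofield} to force $\Gal(KM/K)$ to act nontrivially on $\iota(M)$. That step relies on $r\neq s$, so on $g$ odd. The paper instead treats $\iota(M)\subseteq\End^0(A_K)$ as a separate case, in which the containment already follows from the centraliser computation.
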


\begin{proof}
    Since $\OO$ is an order in a quadratic field, we know that $\OO = \Z[f \omega]$ where $\OM = \Z[\omega]$ is the maximal order and $f = [\OM:\OO]$ . As $\ell$ does not divide the discriminant of $\OO$, we know that $\ell \nmid f$. Let $\alpha = f \omega$ and $m_\alpha$ denote the minimal polynomial of $\alpha$.
    If $m_\alpha(0)\equiv 0 \pmod{\ell}$, then $\ell$ splits in $M$ and as $\ell \neq 2$ there exists $n \in \ZZ$ such that $m_{\alpha+n}(0) \neq 0$.
    As $\ZZ[\alpha+n]=\ZZ[\alpha]$, we may assume $m_\alpha(0) \not \equiv 0 \pmod{\ell}$.

    Let $P,Q \in T_\ell(A)$ and denote by $\langle, \rangle \colon T_\ell(A) \times T_\ell(A) \rightarrow \ZZ_\ell$ the Weil pairing.
    Since $\ell$ is admissible, the pairing is perfect.
    Standard properties of the Rosati involution \cite[pgs. 189, 192]{Mumford_ab_vars} along with Albert's Classification \cite[Thm. 2, pg. 201]{Mumford_ab_vars} 
    give us the following equalities: 
    \[\langle \iota(\alpha)P,\iota(\alpha)Q \rangle = \langle P , \iota(\overline{\alpha}) \iota(\alpha)Q \rangle =\Nm_{M/\QQ}(\alpha)\langle P,Q\rangle .\]
    Since $\Nm_{M/\QQ}(\alpha)=m_\alpha(0) \not \equiv 0 \pmod{\ell}$, it follows that $\alpha$ induces a similarity on $A[\ell]$ with respect to the Weil pairing.
    The minimal polynomial of $\iota(\alpha)$ acting on the Tate module is given by $m_\alpha$ (combine \cite[Chapter II, Thm. 3.6]{Lang_CM} and \cite[Chapter II, Lemma 1]{Shimura_CM_book}).
    Thus the minimal polynomial of the induced similarity on $A[\ell]$ is $m_\alpha$ reduced modulo $\ell$.

    Since $\ZZ[\alpha]$ is $\ell$-maximal, we may apply Dedekind-Kummer factorisation to see $m_\alpha \pmod{ \ell}$ is irreducible if $\ell$ is inert in $M$ and is reducible with distinct roots if $\ell$ splits in $M$.
    In particular, Dedekind-Kummer factorisation tells us that if $\ell$ splits in $M$, then writing $m_\alpha = \phi \phi'$ with each of $\phi$ and $\phi'$ as degree 1 polynomials we have, up to exchanging $\phi$ and $\phi'$, $\lambda= \langle \ell, \phi(\alpha) \rangle$ and $\lambdabar= \langle \ell, \phi'(\alpha) \rangle$.
    Thus for $\ell$ split in $M$, we have $A[\lambda] = A[\ell] \cap \Ker(\phi(\alpha))$ and $A[\lambdabar] = A[\ell] \cap \Ker(\phi'(\alpha))$.
    As $\iota (M)$ is contained in $\End^0(A_{KM})$, 
    the action of $G_{KM}$ commutes with that of $\iota(\alpha)$ on $A[\ell]$.
    Thus the result for $G_{KM}$ follows from applying \cref{prop:centralisers_grp_thy_split_case} and \cref{prop:centralisers_grp_thy_inert_case}.

    If $\iota(M) \subseteq\End^0(A_{K})$, then the result for $G_{K}$ follows by the same reasoning as that of $G_{KM}$.
    Thus let us suppose $\iota(M) \not \subseteq\End^0(A_{K})$.
    Then the action of $G_{K}$ on $\End^0(A_{KM}) \cong M$ furnishes a surjection $\varrho \colon G_{K} \rightarrow \Gal(M/\QQ)$ with kernel $G_{KM}$.
    
    In particular, for $\sigma \in G_{K}$ and $\alpha \in M$ we have $\iota(\alpha)^\sigma=\iota(\alpha^{\varrho(\sigma)})$.
    For the action on $A[\ell]$, we thus have $ \rho_\ell(\sigma)\iota(\alpha)=\iota(\alpha^{\varrho(\sigma)})\rho_\ell(\sigma)$.
    The result for $G_K$ when $\ell$ is inert in $M$ is thus clear.

    Let us consider the case where $\ell$ is split in $M$.
    Since the minimal polynomial of $\alpha$ has degree two, we see that if $\sigma \in G_K \setminus G_{KM}$, then $\alpha^{\varrho(\sigma)}= \Nm_{M/\QQ}(\alpha)\alpha^{-1}$.
    Thus if $\sigma \in G_K \setminus G_{KM}$ and $v$ is an eigenvector of $\iota(\alpha)$ with eigenvalue $a$, then \[\iota(\alpha) \rho_\ell(\sigma)v=\rho_\ell(\sigma)\iota(\alpha^\varrho(\sigma))v= \Nm_{M/\QQ}(\alpha) \rho_\ell(\sigma)\iota(\alpha^{-1})v=\Nm_{M/\QQ}(\alpha)a^{-1}\rho_\ell(\sigma)v,\]
    from which we deduce that $\sigma$ exchanges the eigenspaces of $\alpha$.
    For $\tau \in G_{KM}$, we have $^\sigma\rho_\lambda(\tau) \coloneqq \rho_\lambda(\sigma \tau \sigma^{-1}) = \rho_\ell(\sigma \tau \sigma^{-1})|_{A[\lambda]}$. Since $\sigma$ gives an isomorphism $A[\Bar \lambda] \xrightarrow{\simeq} A[\lambda]$ of the underlying vector spaces, and the action of $\tau$ on $A[\Bar \lambda]$ is through $\rho_{\Bar \lambda}(\tau)$, we deduce that $^\sigma\rho_\lambda \cong \rho_{\Bar \lambda}$.
    
    The final result then follows from applying \cref{prop:centralisers_grp_thy_split_case}.
\end{proof}

As is common in group theory (see \cite[Pg. 6]{Bray_Holt_Roney-Dougal} or \cite[Pg. 15]{KleidmanLiebeck}), we will use the notation $\GL^+=\GL$ and $\GL^-=\GU$.
That is, $\GL^+$ and $\GL^-$ denote the isometry group of the zero pairing and a non-degenerate hermitian pairing respectively.
It is convenient for us to further extend this notation to the similitude groups, thus writing $\CL^+=\GL$ and $\CL^-=\CU$.

The following is a direct consequence of Theorem \ref{thm:modellimagesarecontainedin}. 

\begin{corollary}
    \label{thm:containment_for_rho_lambda}
    Let $\ell$ be an admissible prime for $A$, and $\lambda$ a prime above $\ell$ in $M$.
    Writing $\varepsilon=\left(\frac{\Delta_M}{\ell}\right)$, we have
     \[\rho_\lambda(G_{KM}) \subseteq \CL^\varepsilon(A[\lambda]) \]
    and \[\rho_\lambda(G_{KM(\zeta_\ell)}) \subseteq \GL^\varepsilon(A[\lambda]). \]
\end{corollary}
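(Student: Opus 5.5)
The plan is to read off the corollary from \cref{thm:modellimagesarecontainedin} by splitting on $\varepsilon$, which by Dedekind--Kummer records whether $\ell$ splits ($\varepsilon=+1$) or is inert ($\varepsilon=-1$) in $M$. Admissibility excludes $\ell$ ramified in $M$, so $\varepsilon \neq 0$. The hypothesis ``$\ell$ odd or $2$ inert'' of the theorem is implicitly needed. If $\ell=2$ splits, I would either add it as a standing assumption or note that the proof of the theorem only used $\ell$ odd to translate $\alpha$ so that $m_\alpha(0)\not\equiv 0$. For $\ell=2$ split one can still pick $\alpha=f\omega+n$ with nonzero norm mod $2$ unless both roots are $0,1$; in that case I would instead pick a different generator such as $\alpha+1$.

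\textbf{Split case ($\varepsilon=+1$).} The theorem gives $\rho_\ell(G_{KM}) \subseteq \FF_\ell^*\times\GL_g(\ell)$. Via \cref{prop:centralisers_grp_thy_split_case}\,iii), an element $(\mu,T)$ acts on $V_\phi = A[\lambda]$ by $\mu T$ (or by $T^{-t}$ if $\lambda$ corresponds to $\phi'$). In either case the restriction to $A[\lambda]$ lies in $\GL(A[\lambda])=\CL^+(A[\lambda])$, which is the first claim and is in fact vacuous. For the second claim, $\GL^+=\GL$ as well, so it is immediate too. One could note the sharper statement that on $G_{KM(\zeta_\ell)}$ the similitude factor $\mu=\chi_\ell$ is trivial, but it is not needed.

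\textbf{Inert case ($\varepsilon=-1$).} Here $\lambda=\ell\OO$, $\FF_\lambda=\FF_{\ell^2}=\FF_\ell[\alpha]$ and $A[\lambda]=A[\ell]$ with its $\FF_{\ell^2}$-structure from $\iota(\alpha)$. The theorem together with \cref{prop:centralisers_grp_thy_inert_case} gives $\rho_\ell(G_{KM})\subseteq \CU(V)$, the similitude group of the Hermitian form $\langle,\rangle_\#$ of \cref{lem:unique_Hermitian_form}, so $\rho_\lambda(G_{KM}) \subseteq \CL^-(A[\lambda])$. Next, $G_{KM(\zeta_\ell)}$ acts trivially on $\mu_\ell$. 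Hence the Weil pairing similitude factor, which is the mod-$\ell$ cyclotomic character, is trivial on it. Its image therefore lies in $C_{\GSp}(\alpha)\cap\Sp$, which the proposition identifies with $\GU(V)=\GL^-(A[\lambda])$.

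The only real subtlety is bookkeeping. One must check that the $\FF_\lambda$-structure on $A[\lambda]$ agrees with the one used in \cref{prop:centralisers_grp_thy_inert_case}, namely the action of $\FF_\ell[\alpha]$. This holds since $\OO/\lambda=\FF_\ell[\alpha]$ when $\ell\nmid f$. Beyond that, one only has to handle the $\ell=2$ edge case noted above.
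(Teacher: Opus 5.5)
Your argument is correct and matches the paper, which simply reads the corollary off Theorem~\ref{thm:modellimagesarecontainedin}: for $\varepsilon=+1$ both claims are vacuous, because $\CL^+=\GL^+=\GL$ and $G_{KM}$ acts $\FF_\lambda$-linearly on $A[\lambda]$, while for $\varepsilon=-1$ one has $A[\lambda]=A[\ell]$ and the theorem gives $\CU$ and $\GU$ directly. Your $\ell=2$ aside is wrong, though harmless: if $2$ splits then $m_{\alpha+n}\equiv x(x+1)\pmod 2$ for every $n$, so no translate (including $\alpha+1$) has odd norm and the proposed fix fails, but the vacuous split case never invokes the theorem, so no extra hypothesis on $\ell=2$ is needed.
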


\subsection{Background on $\lambda$-adic representations}
\label{subsection:lambda-adic_reps}
From now on $A$ will denote an abelian threefold defined over $M$ with \emph{imaginary multiplication} by $(M, \iota)$ of signature $(2,1)$. 

Let $T_\ell(A) \coloneqq \varprojlim A[\ell]$ denote the Tate module of $A$, and $V_\ell(A) = T_\ell(A) \otimes_{\ZZ_\ell} \QQ_\ell$.
The endomorphism algebra $\End^0(A)$ acts faithfully on $V_\ell(A)$ and its action commutes with that of $\QQ_\ell$, making $V_\ell(A)$ into an $M \otimes \QQ_\ell \cong  \prod_{\lambda|\ell}M_\lambda$ module via $\iota$, where here $\lambda|\ell$ runs over all primes above $\ell$ in $M$ and $M_\lambda$ is the completion of $M$ at $\lambda$.
The decomposition $M \otimes \QQ_\ell = \prod_{\lambda|\ell}M_\lambda$ allows us to write $1 \in M \otimes \QQ_\ell$ as a sum of idempotents $1 = \sum_{\lambda | \ell} e_\lambda$ where $e_\lambda$ is the multiplicative identity in $M_\lambda$ and the additive identity in $M_{\lambda'}$ for $\lambda' \neq \lambda$.
The vector space $e_\lambda V_\ell(A)$ naturally inherits the structure of an $M_\lambda$-vector space, which we denote by $V_\lambda(A)$.
Each $V_\lambda(A)$ is a $M_\lambda$-vector space of dimension $3$ \cite{ribet_RM}.
Furthermore, since $\Res^{M_\lambda}_{\QQ_\ell} V_\lambda(A) = e_\lambda V_\ell(A)$ and $M \hookrightarrow \End_{\QQ_\ell}(V_\ell)$ is a unital algebra embedding, we have a decomposition
\[V_\ell(A) = \prod_{\lambda|\ell} \Res^{M_\lambda}_{\QQ_\ell}(V_\lambda(A)).\]

Thus the representation $\rho_{\ell^\infty}\colon \GM \rightarrow \Aut_{\QQ_\ell}(V_\ell(A))$ commutes with the action of $M\otimes \QQ_\ell$.
This in turn implies each $V_\lambda(A)$ is an $M_\lambda[\GM]$-module.
We denote the corresponding representation by
\[\rho_{\lambda^\infty} \colon \GM \rightarrow \Aut_{M_\lambda}(V_\lambda(A)).\]
If $\ell$ is an admissible prime, then we get a corresponding decomposition of the Tate module leading to a lattice $T_\lambda(A)$ inside $V_\lambda(A)$ for every prime $\lambda |\ell$.
We then have an isomorphism of $\FF_\lambda[\GM]$-modules $T_\lambda(A)/\lambda \cdot T_\lambda(A) \cong A[\lambda]$.

This family of representations $(\rho_{\lambda^\infty})_\lambda$ is known to form a strictly compatible system \cite[Thm 2.12]{ribet_RM}.
It follows that their determinants $(\det \circ \rho_{\lambda^\infty})_\lambda$ also form a strictly compatible system.
Such a system of $\lambda$-adic representations is known to arise from an algebraic Hecke character $\Omega$ of $M$ with values in $M$ and conductor dividing the conductor of $A$ \cite[Prop. 1.4, page 25]{Periods_of_Hecke_characters}.
That is, the compatible system of $\lambda$-adic representations $(\Omega_{\lambda^\infty})_\lambda$ associated to $\Omega$ as in \cite[Chapter 0, \S 5]{Periods_of_Hecke_characters} coincides with $(\det \circ \rho_{\lambda^\infty})_\lambda$.
Generalising the Shimura-Taniyama formula, Fité has shown that the infinity type of $\Omega$ is determined by the action of $\iota(M)$ on $\Omega^1(A)$ \cite[Prop. 14]{Fite_Ordinary_primes}.
In particular, as $A$ has imaginary multiplication by $M$, we deduce that the infinity type of $\Omega$ is either $2\varphi^{-1}+\overline{\varphi}^{-1}$ or $\varphi^{-1}+2\overline{\varphi}^{-1}$.

\subsection{Action of inertia at $\ell=p$}
\label{subsection:inertia_at_ell}
Let $\lambda$ be an unramified prime of $M$ above the rational prime $\ell$, $k_\lambda$ the residue field of $M_\lambda$ and $M_\lambda^{ur}$ be the maximal unramified extension of $M_\lambda$.
For any $d$ with $\ell \nmid d$, we denote by $\mu_d$ the $d$-th roots of unity in $M_\lambda^{ur}$, which we identify with their images in $\Bar{k}_\lambda$.

Let $\pi$ be a uniformiser of $M_\lambda^{ur}$.
Consider the map 
\[\theta_{\{d\}, \lambda} \colon \Gal(M_\lambda^{ur}(\pi^{1/d})/M_\lambda^{ur}) \rightarrow \mu_d\]
defined by sending the element $\sigma \in \Gal(M_\lambda^{ur}(\pi^{1/d})/M_\lambda^{ur})$ to the $d$-th root of unity $\theta_{\{d\}, \lambda}(\sigma)$ satisfying
\[\sigma(\pi^{1/d}) = \theta_{\{d\}, \lambda}(\sigma)\pi^{1/d}.\]
One may check that $\theta_{\{d\}, \lambda}$ does not depend on the choice of uniformiser $\pi$ nor its $d$-th root and that it is in fact a group isomorphism.

Clearly the $\theta_{\{d\}, \lambda} \colon I^t_\lambda \rightarrow \Bar{k}_\lambda^*$ give characters of the tame inertia group $ I^t_\lambda$.
In fact, they give rise to all characters.
Indeed, denoting the localisation of $\ZZ$ by $\ZZ \setminus (\ell)$ by $\ZZ_{(\ell)}$, we have:
\begin{proposition}{\cite[Prop. 5]{Serre72} }
\label{prop:classification_of_tame_characters}
    The map $\ZZ_{(\ell)}/\ZZ \rightarrow \Hom(I^t_\lambda , \Bar{k}_\lambda^* )$ defined by $\frac{b}{d} \mapsto \theta_{\{d\},\lambda}^b$ is an isomorphism of groups.
\end{proposition}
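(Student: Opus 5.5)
The plan is to use the standard structure of tame inertia. Write $I_\lambda$ for the inertia group of $M_\lambda$ and $I^t_\lambda = I_\lambda/P_\lambda$ for its tame quotient, where $P_\lambda$ is the wild inertia group. Two facts from local class field theory and Kummer theory drive the argument: $I^t_\lambda \cong \varprojlim_{\ell \nmid d} \mu_d$, and this isomorphism is realised by the maps $\theta_{\{d\},\lambda}$ defined just above.

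First, note that any continuous character $\chi \colon I^t_\lambda \to \Bar{k}_\lambda^*$ has finite image. The image is a finite subgroup of $\Bar{k}_\lambda^*$, since a compact group mapping to a discrete group has finite image, and $\Bar k_\lambda^*$ carries the discrete topology. So the image is cyclic of some order $d$, and $\ell \nmid d$ because $\Bar{k}_\lambda^*$ has no elements of order $\ell$.

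Next, observe that the fixed field of $\ker\chi$ is then a tamely ramified abelian extension of $M_\lambda^{ur}$ of degree $d$. Every such extension is $M_\lambda^{ur}(\pi^{1/d})$: the maximal tame extension of $M_\lambda^{ur}$ is $\bigcup_{\ell\nmid d} M_\lambda^{ur}(\pi^{1/d})$, whose Galois group is procyclic, so its quotient of order $d$ is unique. Hence $\chi$ factors through $\Gal(M_\lambda^{ur}(\pi^{1/d})/M_\lambda^{ur})$, which $\theta_{\{d\},\lambda}$ identifies with the cyclic group $\mu_d$. Every character of a cyclic group of order $d$ into $\mu_d$ is a power of the identifying isomorphism, so $\chi = \theta_{\{d\},\lambda}^b$ for some $b$. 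This gives surjectivity of $b/d \mapsto \theta_{\{d\},\lambda}^b$.

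It remains to check that the map is well defined and is a homomorphism with the stated kernel. For compatibility, check that $\theta_{\{de\},\lambda}^e = \theta_{\{d\},\lambda}$ on $I^t_\lambda$, choosing $\pi^{1/d} = (\pi^{1/de})^e$. This shows that $b/d \mapsto \theta_{\{d\},\lambda}^b$ depends only on the fraction. Since all $\theta_{\{d\},\lambda}$ for $d \mid N$ are powers of $\theta_{\{N\},\lambda}$, sums of fractions go to products of characters. Finally, $\theta_{\{d\},\lambda}^b$ is trivial exactly when $d \mid b$, because $\theta_{\{d\},\lambda}$ is surjective onto $\mu_d$; so the kernel is $\ZZ$.

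The main obstacle is the step identifying every tame abelian extension with a Kummer extension by a root of the uniformiser. This requires the structure theorem for tamely ramified extensions of a strictly henselian field. I would cite it rather than reprove it, exactly as Serre does in \cite[Prop. 5]{Serre72}.
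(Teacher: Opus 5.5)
Your proof is correct, and it is essentially the argument behind the paper's citation. The paper gives no proof of its own beyond referring to Serre, who likewise rests everything on the identification $I^t_\lambda \cong \varprojlim_{\ell\nmid d}\mu_d$ furnished by the $\theta_{\{d\},\lambda}$, i.e.\ the structure theorem for tame extensions of $M_\lambda^{ur}$ that you cite; your remaining steps (finite image of a continuous character, uniqueness of the index-$d$ quotient, and $\theta_{\{de\},\lambda}^e=\theta_{\{d\},\lambda}$) just unpack $\Hom_{\mathrm{cont}}(\varprojlim_d \mu_d, \Bar{k}_\lambda^*) \cong \varinjlim_d \ZZ/d\ZZ \cong \ZZ_{(\ell)}/\ZZ$.
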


Note that characters $\theta_{\{\ell^n-1\}, \lambda} \colon I^t_\lambda \rightarrow \FF_{\ell^n}^*$ are surjective, as are their powers $\theta_{\{\ell^n-1\}, \lambda}^{\ell^s}$, for $s \in \ZZ/n\ZZ$.
We call such a character a fundamental character of level $n$ and write $\theta_{n, \lambda} \coloneqq\theta_{\{\ell^n-1\}, \lambda}$.
In fact, when what is meant is clear from context we shall write $\theta_{n}$ in place of $\theta_{n, \lambda}$.

Note also that $\chi_\ell|_{I_\lambda}$, that is, the restriction of the mod-$\ell$ cyclotomic character $\chi_\ell \colon \GM \rightarrow \FF_\ell^*$, post composed with $\FF_\ell^* \hookrightarrow \Bar{k}_\lambda^*$ gives rise to an element of  $\Hom(I^t_\lambda , \Bar{k}_\lambda^* )$ and hence equals $\theta_{1, \lambda}$ by \cref{prop:classification_of_tame_characters}.
We shall reserve the notation $\chi_\ell$ for the cyclotomic character when viewed as a global representation, i.e., of $\GM$ and write $\theta_{1, \lambda}$ (or $\theta_1$) when viewed as a character of $I_\lambda$.

\begin{proposition}
\label{prop:det_char_at_ell}
Let $\lambda$ be a prime of $M$ above an admissible prime $\ell$.
If $\ell$ splits in $M$, then either $\det \circ \rho_\lambda|_{I_\lambda} = \theta_1$ and $\det \circ \rho_{\lambda}|_{I_{\overline{\lambda}}} = \theta_1^{2}$, or $\det \circ \rho_{\lambda}|_{I_\lambda} = \theta_1^2$ and $\det \circ \rho_{\lambda}|_{I_{\overline{\lambda}}} = \theta_1$.

If $\ell$ is inert in $M$, then either $\det \circ \rho_{\lambda}|_{I_\lambda} = \theta_2^{2+\ell}$ or $\theta_2^{1+2\ell}$.
\end{proposition}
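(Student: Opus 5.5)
We prove this through the Hecke character $\Omega$ of \cref{subsection:lambda-adic_reps}: the characters $\det \circ \rho_{\lambda^\infty}$ form the compatible system $(\Omega_{\lambda^\infty})_\lambda$, and the infinity type of $\Omega$ is $2\varphi^{-1}+\overline{\varphi}^{-1}$ or $\varphi^{-1}+2\overline{\varphi}^{-1}$. Since $\ell$ is admissible, $\det\circ\rho_\lambda$ is the reduction modulo $\lambda$ of $\Omega_{\lambda^\infty}$ (via $T_\lambda(A)/\lambda T_\lambda(A)\cong A[\lambda]$). It therefore suffices to compute $\Omega_{\lambda^\infty}$ restricted to the inertia groups at the primes above $\ell$, and then reduce.

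Recall the standard description of the $\lambda$-adic character of an algebraic Hecke character with infinity type $\sum n_\tau \tau$ (in the normalisation of \cite[Chapter 0, \S 5]{Periods_of_Hecke_characters}). Via local class field theory, on the image of $\calO_{M_v}^*$ for a place $v\mid \ell$ of $M$, the character $\Omega_{\lambda^\infty}$ agrees, up to a finite-order character, with $x\mapsto \prod_{\tau} \tau(x)^{-n_\tau}$. Here the product runs over the embeddings $\tau\colon M_v\hookrightarrow M_\lambda$ induced by those embeddings of $M$ that are $\lambda$-adically close to $v$. The finite-order part is unramified at $v$ when $\ell$ does not divide the conductor, and in general its contribution to the tame character is trivial modulo $\lambda$. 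One should be careful here: if $\ell$ divides the conductor we need that this part restricted to inertia has $\ell$-power order. This should follow because $A$ acquires good or semistable reduction over an extension that is tame at $v$ only if $\ell$ is large, so we may need semistability. This is the delicate step, and I would first try to handle it using the Shimura–Taniyama–Fité formula \cite[Prop. 14]{Fite_Ordinary_primes} directly, in its local form. Composing with the local Artin map, the reciprocity law $x \mapsto$ inertia, reduced modulo $\lambda$, identifies $x\mapsto \bar x^{-1}$ on $\calO_{M_v}^*$ with the fundamental character of level $[k_v:\FF_\ell]$, up to the sign convention.

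In the split case $\ell=\lambda\lambdabar$, we have $M_\lambda=M_{\lambdabar}=\QQ_\ell$, and exactly one embedding of $M$ is $\lambda$-adically close to each of $\lambda$ and $\lambdabar$. Suppose the infinity type is $2\varphi^{-1}+\overline{\varphi}^{-1}$. Then the inertia at one of the two primes sees the exponent $2$ and the other sees the exponent $1$. This gives $\theta_1^2$ and $\theta_1$ respectively, in some order, which yields the two alternatives; the other infinity type merely swaps them. In the inert case, $M_\lambda=\QQ_{\ell^2}$ and both embeddings occur at the unique place, differing by Frobenius. The reduction is then $\bar x^{a}\cdot(\bar x^{\ell})^{b}$ with $\{a,b\}=\{1,2\}$, which corresponds to $\theta_2^{2+\ell}$ or $\theta_2^{1+2\ell}$.

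As a consistency check, $\det\rho_\lambda\cdot\det\rho_{\lambdabar}$ must equal $\det\rho_\ell=\chi_\ell^3$ on inertia. This matches $\theta_1\cdot\theta_1^2$ in the split case and $\theta_2^{(2+\ell)(1+\ell)}\equiv\theta_1^{3}$ (up to Frobenius twist) in the inert case. The check also pins down the sign conventions. The main obstacle is matching the normalisations, namely the sign of the infinity type against the Artin map convention, together with the possible ramification of the finite-order part. As an alternative that avoids these, I would argue via Raynaud's theorem on the tame inertia weights of $A[\ell]$ (the weights are $0$ and $1$ for semistable reduction), combined with the fact that the signature $(2,1)$ decomposes $\mathrm{Lie}$ and hence the weights among the $\lambda$-parts.
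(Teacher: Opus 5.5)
Your route is the paper's: identify $\det\circ\rho_{\lambda^\infty}$ with the $\lambda$-adic avatar of the Hecke character $\Omega$, whose infinity type comes from Fité's formula. Then reduce modulo $\lambda$ and read off the restriction to inertia through local class field theory. The paper simply quotes Serre and Goodman for the local computation that you spell out.

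The gap is the finite-order part of $\Omega$ at a place $v\mid\ell$. Your assertion that in general its contribution to the tame character is trivial modulo $\lambda$ is false. The justification you sketch also points the wrong way. Suppose $A$ becomes semistable over an extension that is tame at $v$. Then the ramified part of $\Omega$ on $\calO_{M_v}^*$ factors through $k_v^*$ and has order prime to $\ell$. It therefore reduces to a power of $\theta_f$ that is in general non-trivial. Only a wild, $\ell$-power-order contribution dies modulo $\lambda$.

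No argument can close this step, because the statement fails for merely admissible $\ell$. Let $A$ have good reduction above an admissible prime $\ell\geq 5$ that splits in $M$. Let $\chi\colon G_M\to\{\pm1\}$ cut out $M(\sqrt{\ell})$, which is ramified at $\lambda$. Twisting by the central automorphism $-1$ preserves the endomorphisms, the signature and the polarisation degree, so $\ell$ is still admissible for $A^\chi$. However, $\det\circ\rho^\chi_\lambda=\chi^3\cdot\det\circ\rho_\lambda=\chi\cdot\det\circ\rho_\lambda$ and $\chi|_{I_\lambda}=\theta_1^{(\ell-1)/2}$. Hence $\det\circ\rho^\chi_\lambda|_{I_\lambda}=\theta_1^{a+(\ell-1)/2}$ with $a\in\{1,2\}$, which is neither $\theta_1$ nor $\theta_1^2$.

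You therefore need the extra hypothesis that $\Omega$ is unramified at the primes above $\ell$. This is exactly what the paper's proof uses tacitly, since it only treats inertia at primes coprime to the conductor of $\Omega$. The hypothesis holds whenever $A$ is semistable at the primes above $\ell$, which is the situation of \cref{prop:inertia_at_l_description}. Indeed, for $\lambda'\nmid\ell$, Grothendieck's criterion makes $I_v$ act unipotently on $V_{\lambda'}(A)$. So $\Omega_{\lambda'^\infty}|_{I_v}=\det\circ\rho_{\lambda'^\infty}|_{I_v}$ is trivial, i.e.\ the finite part of $\Omega$ is trivial on $\calO_{M_v}^*$. Your Raynaud alternative also presupposes semistability, so it does not avoid the issue. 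Once the hypothesis is added, the rest of your bookkeeping is fine: one embedding per place in the split case, two Frobenius-conjugate embeddings in the inert case, and the $\chi_\ell^3$ consistency check.
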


\begin{proof}
As summarised in \cref{subsection:lambda-adic_reps}, there exists an algebraic Hecke character $\Omega$ of $M$ with values in $M$, conductor dividing that of $A$ and infinity type either $2\varphi^{-1}+\overline{\varphi}^{-1}$ or $\varphi^{-1}+2\overline{\varphi}^{-1}$ such that its $\lambda$-adic avatars $\Omega_{\lambda^\infty}$ coincide with $\det \circ \rho_{\lambda^\infty}$.
It is explained in \cite[Section 3.4]{Serre72} (see also \cite[Section 6.1]{Goodman_superelliptic} for a summary) that we may reduce $\Omega_{\lambda^\infty}$ modulo $\lambda$ to obtain a representation $\Omega_{\lambda}$, which  coincides with $\det \circ \rho_{\lambda}$.
Furthermore, if $I$ is the inertia group at a prime above $\ell$ coprime to the conductor of $\Omega$, the restriction of $\Omega_{\lambda}$ to $I$ is shown to be the product of powers of fundamental characters of level $[\FF_\lambda \colon \FF_\ell]$, where the powers are given by the coefficients in the infinity type of $\Omega$ (see \cite[Eq (6.1)]{Goodman_superelliptic}). 
\end{proof}

By definition, a semisimple representation is determined by its simple constituents.
We shall make use of this in the following proposition to keep notation light.
More precisely, we shall write $(\rho_\lambda|_I)^{ss}$ equal to the sum of three characters for some group $I$ to mean that the simple factors of the semisimplification $(A[\lambda]|_{I} \otimes_{\FF_\lambda}\overline{\FF}_\lambda)^{ss}$ are given by these characters.

\begin{proposition}
\label{prop:inertia_at_l_description}
Let $\ell$ be a semistable admissible prime for $A$.
Let $I$ be an inertia subgroup corresponding to a prime above $\ell$ in $\GM$. 

 Suppose $\ell$ splits in $M$.
 Then either $\det \circ \rho_\lambda|_I = \theta_1$ or $\theta_1^2$.
 In the former case $(\rho_\lambda|_{I})^{ss}$ is given by one of $2 \mathbbm{1} + \theta_1, \mathbbm{1}+ \theta_2 + \theta_2^\ell$ and $ \theta_3+ \theta_3^\ell + \theta_3^{\ell^2}$.
In the latter case  $(\rho_\lambda|_{I})^{ss}$ is given by one of $ \mathbbm{1} + 2\theta_1, \theta_1+ \theta_2 + \theta_2^\ell$ and $ \theta_3^{1+\ell}+ \theta_3^{\ell+\ell^2} + \theta_3^{1+\ell^2}$.

 Suppose $\ell$ is inert in $M$. 
Then either  $\det \circ \rho_\lambda|_I  = \theta_2^{2+\ell}$ or $\theta_2^{1+2\ell}$.
In the former case $(\rho_\lambda|_{I})^{ss}$ is given by one of $2 \theta_2 + \theta_2^\ell$, $\mathbbm{1}+ \theta_1+ \theta_2$, or $\theta_6^{e}+\theta_6^{e\ell^2}+ \theta_6^{e\ell^4}$ where $e = 1+\ell+\ell^2$.
In the latter case, $(\rho_\lambda|_{I})^{ss}$  is given by one of $ \theta_2 + 2\theta_2^\ell$, $\mathbbm{1}+ \theta_1+ \theta_2^\ell$, or $\theta_6^{e}+\theta_6^{e\ell^2}+ \theta_6^{e\ell^4}$ where $e = 1+\ell +\ell^5$.
\end{proposition}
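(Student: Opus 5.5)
The plan is to combine Raynaud's description of tame inertia on finite flat group schemes with the determinant constraint of \cref{prop:det_char_at_ell}. The determinant statements are just \cref{prop:det_char_at_ell} again, so only the list of possible semisimplifications needs proof.

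\emph{Step 1: Raynaud.} Since $\ell$ is semistable admissible and $\lambda$ is unramified over $\ell$ (as $\ell$ is unramified in $M$), Raynaud's theorem applies to $A[\ell]$ over $M_\lambda$, where the absolute ramification index is $e=1<\ell-1$; for $\ell=2$ I would use the semistable version, via the finite part of the filtration coming from Grothendieck's semistable reduction. It tells us that every Jordan--H\"older factor of $A[\ell]|_I$ is an $\FF_{\ell^n}$-line on which $I$ acts through $\theta_n^{\sum_{i\in S}\ell^i}$ for some $S\subseteq\{0,\dots,n-1\}$. In the multiplicative-type and toric part the characters are $\mathbbm 1$ or $\theta_1$, which is consistent with this. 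Extending scalars to $\overline{\FF}_\lambda$, each such line contributes the $n$ Galois conjugates $\theta_n^{\ell^j\sum_{i\in S}\ell^i}$.

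\emph{Step 2: passing to $A[\lambda]$.} The $\OO$-action commutes with $I\subseteq \GM$, so $A[\lambda]$ is a subquotient of $A[\ell]$ and its constituents are among those of $A[\ell]$. Over $\overline{\FF}_\lambda$, the module $A[\lambda]$ has dimension $3$. Each constituent is then a full Galois orbit of size $n$ when $\FF_\lambda=\FF_\ell$. When $\FF_\lambda=\FF_{\ell^2}$ (the inert case), it is an orbit under $x\mapsto x^{\ell^2}$ of size $n/\gcd(n,2)$. This yields the orbit structures $1+1+1$, $1+2$, and $3$ (split) or the analogues with levels $2,2,2$, $\{1\text{ or }2\}+4$, and $6$ (inert).

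\emph{Step 3: matching determinants.} For each orbit structure, I would enumerate Raynaud exponents with digits in $\{0,1\}$ and keep those whose product equals the determinant character from \cref{prop:det_char_at_ell}.

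In the split case with $\det=\theta_1$, the level-one options are $\mathbbm 1,\theta_1$, forcing $2\mathbbm 1+\theta_1$. In the $1+2$ case, an irreducible level-two character must be $\theta_2$ or $\theta_2^\ell$ (exponent not divisible by $\ell+1$), and the pair $\theta_2,\theta_2^\ell$ has product $\theta_1$, forcing $\mathbbm 1$ as the remaining factor. For level $3$, the orbit of $\theta_3^{\sum_S\ell^i}$ has product $\theta_1^{|S|}$, so $|S|=1$. The case $\det=\theta_1^2$ is symmetric: we get $\theta_1$ as the extra factor, or $|S|=2$.

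In the inert case, characters of $I$ with values in $\FF_{\ell^2}$ are written at level $2$ (with $\theta_1=\theta_2^{1+\ell}$). For three level-$\le 2$ factors with product $\theta_2^{2+\ell}$, the exponents lie in $\{0,1,\ell,1+\ell\}$ and sum to $2+\ell$ modulo $\ell^2-1$. The solutions are $\{1,1,\ell\}$ and $\{0,1+\ell,1\}$, which give $2\theta_2+\theta_2^\ell$ and $\mathbbm 1+\theta_1+\theta_2$.

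For a level-$4$ factor, its two $\ell^2$-conjugates would force the restriction $\rho_\lambda|_I$ to a $2$-dimensional piece to have a determinant violating the Weil-pairing and Hermitian symmetry from \cref{thm:modellimagesarecontainedin}. I expect to rule this out via the duality $A[\lambda]^\vee\cong A[\overline\lambda](1)$ together with a digit count, which is the delicate point. For level $6$, the exponent $\sum_S\ell^i$ must have product over the $\ell^2$-orbit equal to $\theta_2^{2+\ell}$, which forces $|S|=3$ with a specific parity distribution. This gives $e=1+\ell+\ell^2$ up to the $\ell^2$-shift; the other determinant gives $1+\ell+\ell^5$.

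The main obstacle is the inert case bookkeeping. It requires excluding level-$4$ constituents and mixed configurations, which I would handle using the polarisation duality swapping $\lambda$ and $\overline\lambda$, together with a careful count of base-$\ell$ digits, checking the borderline cases $\ell=2,3$ separately.
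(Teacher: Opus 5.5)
There is a genuine gap in your level-$6$ step. Write $e=\sum_{i\in S}\ell^i$. The determinant condition $\theta_6^{e(1+\ell^2+\ell^4)}=\theta_2^{e}=\theta_2^{2+\ell}$ only says (for $\ell\ge 3$) that $S$ contains two of the positions $\{0,2,4\}$ and one of $\{1,3,5\}$. That leaves nine digit sets, forming three orbits under $S\mapsto S+2$ (multiplication by $\ell^2$), represented by $e=1+\ell+\ell^2$, $1+\ell^2+\ell^3$ and $1+\ell^2+\ell^5$; the case $\theta_2^{1+2\ell}$ is analogous. So the determinant does not single out $1+\ell+\ell^2$, and you need an extra input to discard the other two orbits.

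The paper supplies this input, exactly as it does at level $4$, from $\rho_\lambda(G_M)\subseteq\CU_3(\ell)$. At level $6$, the images of $\theta_6^{1+\ell^2+\ell^3}$ and $\theta_6^{1+\ell^2+\ell^5}$ have order divisible by $(\ell^6-1)/9$, which is impossible in $\CU_3(\ell)$ for $\ell\ge 7$. At level $4$ there are four candidates that do satisfy the determinant condition: $\{\theta_4^{1+\ell+\ell^2},\theta_4^{1+\ell^2+\ell^3},\mathbbm 1\}$, $\{\theta_4,\theta_4^{\ell^2},\theta_1\}$, $\{\theta_4^{1+\ell},\theta_4^{\ell^2+\ell^3},\theta_2\}$ and $\{\theta_4^{1+\ell^3},\theta_4^{\ell+\ell^2},\theta_2\}$. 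Each yields elements of order divisible by $1+\ell^2$, which does not divide $|\CU_3(\ell)|$.

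Your level-$4$ step is only announced, but the duality you point to does work, and it also repairs level $6$, provided you use the right invariant. It is not a determinant of a $2$-dimensional piece. It is the isomorphism of $\FF_{\ell^2}[G_M]$-modules $A[\lambda]\cong\overline{A[\lambda]}^{\vee}(\chi_\ell)$, where the bar means the $\FF_{\ell^2}$-structure twisted by Frobenius; it comes from the Hermitian form of \cref{lem:unique_Hermitian_form}, which has similitude factor $\chi_\ell$. This forces the multiset of inertial characters to be stable under $\psi\mapsto\theta_1\psi^{-\ell}$. On level-$n$ digit sets the map is $S\mapsto\{0,\dots,n-1\}\setminus(S+1)$.
\begin{itemize}
\item For $S=\{0,1,2\}$ it gives $\{0,4,5\}=S+4$, so that orbit is preserved.
\item $\{0,2,3\}\mapsto\{0,2,5\}$ and $\{0,2,5\}\mapsto\{2,4,5\}$ both leave their orbits, so these are excluded.
\item Each of the four level-$4$ candidates above also fails, e.g.\ $\mathbbm 1\mapsto\theta_1$, $\theta_4\mapsto\theta_4^{1+\ell^2+\ell^3}$, $\theta_4^{1+\ell}\mapsto\theta_4^{1+\ell^3}$, $\theta_4^{1+\ell^3}\mapsto\theta_4^{\ell^2+\ell^3}$, none of which lies in the respective set.
\end{itemize}
Carried out, this is a genuinely different route from the paper's order counts, and at level $6$ it avoids the restriction $\ell\ge 7$. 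As written, though, your argument derives a false consequence of the determinant condition and leaves level $4$ open.

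A smaller point: for semistable rather than good reduction you need Grothendieck's filtration for every $\ell$, not only for $\ell=2$. The paper quotes Larson--Vaintrob for the resulting Raynaud-type description of the inertial characters.
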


\begin{proof}
    Let $\psi_1$ be an irreducible $\overline{\FF}_\lambda[I]$-submodule of  $A[\lambda] \otimes_{\FF_\lambda}\overline{\FF}_\lambda$.
    As $\ell$-groups act trivially on irreducible modules in characteristic $\ell$, the wild inertia subgroup of $I$ acts trivially on $\psi_1$.
    Moreover, as the tame quotient of $I$ is pro-cyclic, the action of $I$ on $\psi_1$ is via a cyclic group.
    Hence we deduce that $\psi_1$ is one dimensional.
    Repeating the argument, we may write $(\rho_\lambda|_{I})^{ss} = \psi_1+\psi_2+\psi_3$.
    Our main goal is to determine the set $\Psi \coloneqq \{\psi_1,\psi_2,\psi_3\}$.

    We shall use the following restrictions on $\Psi$.
    First, we clearly have $\psi_1\psi_2\psi_3 = \det \circ \rho_\lambda|_I $.
    Second, as explained in the proof of \cite[Lemma 4.9]{Larson_Vaintrob}, it follows from \cite[Thm. 3.4.3]{Raynaud_type_p} and \cite[Exposé IX]{SGA7} that we may write each $\psi_i = \theta_n^e$ for some $n$ and $e = \sum_{j=0}^{n-1}e_j \ell^j$ with $e_j \in \{0,1\}$. 
    
    Suppose $\ell$ splits in $M$.
    As $A[\lambda]$ is a 3-dimensional vector space over $\FF_\ell$, each $\psi_i$ is a power of a fundamental character of level at most 3.
    By Proposition \ref{prop:det_char_at_ell}, we have either $\det \circ \rho_\lambda|_I=\theta_1$ or $\theta_1^2$.
    Let us suppose we are in the former case, i.e., $\det \circ \rho_\lambda|_I =\theta_1$.
    If one of $\psi_1,\psi_2,\psi_3$ equals $\theta_1$, then $\Psi \coloneqq \{\theta_1,\mathbbm{1},\mathbbm{1} \}$.
    Thus let us suppose in the following that $\theta_1 \not \in \Psi$.
    If $\theta_2 \in \Psi$, then so is its conjugate $\theta_2^\ell$, in which case $\Psi = \{\theta_2,\theta_2^\ell,\mathbbm{1}\}$.
    Thus we may suppose $\theta_3^{e} \in \Psi$ for some $e = e_0+e_1\ell+e_0\ell^2 \neq 1+\ell+\ell^2$, which implies $\Psi =\{\theta_3^e,\theta_3^{e\ell},\theta_3^{e\ell^2}\}$.
    As $\theta_1=\theta_3^e\cdot\theta_3^{e\ell}\cdot\theta_3^{e\ell^2}= \theta_3^{e(1+\ell+\ell^2)}= \theta_1^e$, we find $e \equiv 1 \pmod{\ell-1}$.
    Thus $\{e_0,e_1,e_2\}=\{1,0,0\}$, so $\Psi = \{\theta_3,\theta_3^{\ell},\theta_3^{\ell^2}\}$.
    An analogous argument when $\det \circ \rho_\lambda|_I =\theta_1^2$ determines the possibilities for $(\rho_\lambda|_{I})^{ss}$.

    Suppose $\ell$ is inert in $M$.
    As $A[\lambda]$ is a 3-dimensional vector space over $\FF_{\ell^2}$, the set $\Psi$ must be stable under the action of $G_{\FF_{\ell^2}}$, which implies each $\psi_i$ is a power of a fundamental character of level 1,2,3,4, or 6. 
    Moreover, by Proposition \ref{prop:det_char_at_ell}, we have either $\det \circ \rho_\lambda|_I =\theta_2^{2+\ell}$ or $\theta_2^{1+2\ell}$ thus $\Psi$ must contain a character which is a power of a fundamental character of even level that does not factor through a fundamental character of odd level.
    In particular, no $\psi_i$ can be a power of a fundamental character of level 3, unless it is $\theta_1$.
    Let us first consider the case $\det \circ \rho_\lambda|_I =\theta_2^{2+\ell}$.

    Let us first suppose $\Psi$ only contains fundamental characters of level at most 2.
    If $\theta_1 \in \Psi$, then clearly $\Psi = \{\theta_1, \theta_2, \mathbbm{1}\}$.
    If $\theta_2 \in \Psi$, but $\theta_1 \not \in \Psi$, then  $\Psi = \{\theta_2,\theta_2,\theta_2^\ell\}$.

    Let us now suppose $\theta_4^e \in \Psi$ and $\theta_4^e$ is not $\mathbbm{1}, \theta_1, \theta_2$ or $\theta_2^\ell$.
    Then $\Psi=\{\theta_4^e,\theta_4^{e\ell^2},\theta_2^{e'}\}$ for some $e'$.
    Raynaud's Theorem implies that $e = \sum_{i=0}^3 e_i \ell^i$ and $e' = e'_0+e'_1\ell$ for some $e_i, e_i' \in \{0,1\}$.
    We deduce that $\Psi$ may be equal to one of the following sets  $\{\theta_4^{1+\ell+\ell^2},\theta_4^{1+\ell^2+\ell^3}, \mathbbm{1}\}, \{\theta_4,\theta_4^{\ell^2}, \theta_1\}$,  $\{\theta_4^{1+\ell},\theta_4^{\ell^2+\ell^3}, \theta_2\}$,   or $\{\theta_4^{1+\ell^3},\theta_4^{\ell+\ell^2}, \theta_2\}$.
    It is easy to check in each of these cases that the order inertia at $\ell$ will contain a element of order $1+\ell^2$.
    We shall show this is incompatible with \cref{thm:containment_for_rho_lambda}.
    The order of $\CU_3(\ell)$ equals $\ell^3(\ell+1)(\ell^2-1)^2(\ell^3+1)$.
    Note that 4 does not divide $1+\ell^2$, since otherwise $\ell \pmod{4}$ would be a square root of $3 \pmod{4}$.
    The greatest common divisor of $1+\ell^2$ with any of $\ell+1,\ell^2-1,\ell^3+1$ is 2 if $\ell$ is odd and $1$ otherwise.
    Thus $1+\ell^2$ does not divide the order of $\CU_3(\ell)$.
    Since the image of $\rho_\lambda$ is contained in $\CU_3(\ell)$ by \cref{thm:containment_for_rho_lambda}, we deduce that $\theta_4^e \in \Psi$ if and only if $\theta_4^e$ is one of $\mathbbm{1}, \theta_1, \theta_2$ or $\theta_2^\ell$.

    We are left to consider the case where $\theta_6^e \in \Psi$ and $\theta_6^e$ is not $\mathbbm{1}, \theta_1, \theta_2$ or $\theta_2^\ell$, then $\Psi=\{\theta_6^e,\theta_6^{e\ell^2},\theta_6^{e\ell^4}\}$.
    By Raynaud's Theorem, we may write $e = \sum_{i=0}^5 e_i \ell^i$ for some $e_i \in \{0,1\}$.
    Since $\theta_2^{2+\ell}=\theta_6^e\cdot\theta_6^{e\ell^2}\cdot\theta_6^{e\ell^4}= \theta_6^{e(1+\ell^2+\ell^4)}= \theta_2^e$, we see $e \equiv (e_0+e_2+e_4) + (e_1+e_3+e_5)\ell \equiv 2+\ell \pmod{\ell^2-1}$.
    That is, $\{e_0,e_2,e_4\}=\{1,1,0\}$ and $\{e_1,e_3,e_5\}=\{1,0,0\}$.
    It follows that there are at $\binom{3}{2} \cdot \binom{3}{1} /3 =3$ possibilities for $\Psi$ in this situation.

    By considering the order of $\CU_3(\ell)$, we will, as above, discount 2 of these possibilities, leaving $\Psi= \{\theta^e_6, \theta^{e\ell^2}_6, \theta^{e \ell^4}_6\}$ with $e = 1+\ell+\ell^2$ as the unique possibility.
    To do so, it suffices to show the orders of $\theta_6^{1+\ell^2+\ell^3}(I)$ and $\theta_6^{1+\ell^2+\ell^5}(I)$ do not divide the order of $\CU_3(\ell)$.

    We begin by proving their orders are a multiple of $\frac{1}{9}(\ell^6-1)$.
    We provide the proof for $\theta_6^{1+\ell^2+\ell^3}(I)$, the other case being similar.
    Let $e = 1+\ell^2+\ell^3$.
    Again using modular arithmetic, we find $\gcd(e, \ell+1)=1$,$\gcd(e, \ell^2-\ell+1)=1$ and $\gcd(e,\ell-1)$ divides 3.
    Likewise, we obtain $e \equiv \ell(\ell+1)(\ell-1) \equiv 1-\ell \pmod{\ell^2+\ell+1}$.
    Thus $\gcd(e, 1+\ell+\ell^2)$ divides 3.
    If follows that $\gcd(e, \ell^6-1)$ divides 9 and thus the order of $\theta_6^e$ is a multiple of $\frac{1}{9}(\ell^6-1)$.

    The order of $\CU_3(\ell)$ equals $\ell^3(\ell+1)(\ell^2-1)^2(\ell^3+1)$.
    The gcd of $\# \CU_3(\ell)$ and $1+\ell+\ell^2$ equals 1 if $\ell \not \equiv 1 \pmod{3}$ and it is equal to 3 if $\ell \equiv 1 \pmod{3}$.
    For $\ell \geq 7$, we have that $1+\ell+\ell^2 \geq 57>3^3$, thus $\CU_3(\ell)$ cannot contain an element of order $\frac{1}{9}(\ell^6-1)$.
    Hence if $\Psi = \{\theta^e_6, \theta_6^{e\ell^2}\, \theta_6^{e\ell^4}\}$ we must have either $e = 1+\ell+\ell^2$.

    An analogous argument when $\det \circ \rho_\lambda|_I =\theta_2^{1+2\ell}$ determines the possibilities for $(\rho_\lambda|_{I})^{ss}$.
    We note that the factorisation $\ell^5+\ell+1=(\ell^2+\ell+1)(\ell^3-\ell^2+1)$ is used for the case when $\Psi$ contains a character which is a power of a fundamental character of level 6, not factoring through one of lower level.
\end{proof}

\begin{proposition}
\label{prop:det_surjects_and_GM_surjects_mod_lambda_iff_GMZetaell_surjects}
The homomorphism  $\Omega_\lambda =\det \circ \rho_{\lambda} \colon G_{M(\zeta_\ell)} \rightarrow \FF_\lambda^*$ is surjective if $\ell$ splits in $M$ and it surjects onto $\left(\FF_\lambda^*\right)^{\ell-1}$ if $\ell$ is inert in $M$.

 Moreover, for $\ell$ split in $M$, we have $\rho_\lambda(G_M)= \GL_3(\ell)$ if and only if $\rho_\lambda(G_{M(\zeta_\ell)})= \GL_3(\ell)$.
\end{proposition}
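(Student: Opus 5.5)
The plan is to use the description of $\det\circ\rho_\lambda$ on inertia at $\ell$ from \cref{prop:det_char_at_ell}, together with the fact that $M(\zeta_\ell)/M$ is ramified only above $\ell$.

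\textbf{Split case.} Let $\ell=\lambda\lambdabar$. Since $\ell$ is unramified in $M$, $\Omega_\lambda$ takes values in $\FF_\lambda^*=\FF_\ell^*$. By \cref{prop:det_char_at_ell}, its restriction to one of $I_\lambda, I_{\lambdabar}$ equals $\theta_1$, and to the other equals $\theta_1^2$. Choose the prime $\mathfrak p\in\{\lambda,\lambdabar\}$ where the restriction is $\theta_1$.

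Now pass to $G_{M(\zeta_\ell)}$. The field $M(\zeta_\ell)$ is totally ramified at $\mathfrak p$ of degree $\ell-1$, because $M_{\mathfrak p}=\QQ_\ell$. Hence an inertia group $I'\subseteq G_{M(\zeta_\ell)}$ at a prime above $\mathfrak p$ has image in the tame quotient of $I_{\mathfrak p}$ of index $\ell-1$. The character $\theta_1$ then cuts out $M(\zeta_\ell)$ on inertia, so $\theta_1(I')=1$ and this direct route gives nothing.

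To fix this, I would instead use the second character. On the other prime $\mathfrak p'$ the restriction is $\theta_1^2$, which also vanishes on $I'$. So the argument must be global. Write $\Omega_\lambda=\chi_\ell^a\cdot\psi$, where $\psi$ is unramified at $\lambda$ and $\lambdabar$. Here $a$ would be read off from $I_\lambda$, but the exponents on $I_\lambda$ and $I_{\lambdabar}$ differ, so $\Omega_\lambda\chi_\ell^{-1}$ is nontrivial on $I_{\lambdabar}$, where it is $\theta_1$ or $\theta_1^{-1}$.

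The cleaner route is the following. Inertia at $\lambdabar$ in $M(\zeta_\ell)$ is again index $\ell-1$ in $I_{\lambdabar}$, so this also dies. The correct observation is that $M(\zeta_\ell)/M$ is ramified at both $\lambda$ and $\lambdabar$, while $\Omega_\lambda\chi_\ell^{-1}$ is ramified at $\lambdabar$ but not at $\lambda$. Therefore $\Omega_\lambda|_{G_{M(\zeta_\ell)}}$ cuts out an abelian extension of $M(\zeta_\ell)$. I would compare it with $\chi_\ell$ using the field $F$ cut out by $\Omega_\lambda\times\chi_\ell\colon G_M\to\FF_\ell^*\times\FF_\ell^*$. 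Restricting to $I_\lambda$ gives the image $\{(t,t)\}$, and restricting to $I_{\lambdabar}$ gives the image $\{(t^2,t)\}$, or the roles are swapped. These two cyclic subgroups generate all of $\FF_\ell^*\times\FF_\ell^*$, since $(t,t)$ and $(t^2,t)$ together give $(t,1)$. Hence $\Omega_\lambda\times\chi_\ell$ is surjective. Consequently the kernel of $\chi_\ell$, namely $G_{M(\zeta_\ell)}$, surjects onto $\FF_\ell^*\times\{1\}$ under $\Omega_\lambda$. This is the key step.

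\textbf{Inert case.} Here $\FF_\lambda=\FF_{\ell^2}$ and $I_\lambda$ maps onto $\FF_{\ell^2}^*$ via $\theta_2$. The pair $(\Omega_\lambda,\chi_\ell)$ on $I_\lambda$ is $(\theta_2^{2+\ell},\theta_2^{1+\ell})$ or $(\theta_2^{1+2\ell},\theta_2^{1+\ell})$. Restricting to the kernel of $\theta_2^{1+\ell}$, which is the group of $(\ell-1)$-th powers in $\FF_{\ell^2}^*$, a generator $u=x^{\ell-1}$ maps to $u^{2+\ell}=u\cdot u^{1+\ell}=u$, using $u^{\ell+1}=1$. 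The other case gives $u^{1+2\ell}=u^{\ell}$. So inertia within $G_{M(\zeta_\ell)}$ already surjects onto $(\FF_{\ell^2}^*)^{\ell-1}$. Equality holds because $\Omega_\lambda$ is Galois-equivariant with $\chi_\ell=\Nm\circ\Omega_\lambda$ up to the Weil-pairing relation coming from the $\CU$-similitude in \cref{thm:containment_for_rho_lambda}. This forces the image of $G_{M(\zeta_\ell)}$ to lie in the norm-one subgroup, which is exactly $(\FF_{\ell^2}^*)^{\ell-1}$.

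\textbf{The ``moreover'' statement.} One direction is trivial: if $\rho_\lambda(G_{M(\zeta_\ell)})=\GL_3(\ell)$, then $\rho_\lambda(G_M)=\GL_3(\ell)$. For the converse, $H=\rho_\lambda(G_{M(\zeta_\ell)})$ is normal in $\GL_3(\ell)$ with abelian quotient, since $\Gal(M(\zeta_\ell)/M)$ is abelian. Therefore $H\supseteq\SL_3(\ell)$, the commutator subgroup (valid for $\ell$ with $\SL_3(\ell)$ perfect, which holds for all $\ell$). Combining this with $\det(H)=\FF_\ell^*$ from the first part gives $H=\GL_3(\ell)$. The main obstacle is the split-case surjectivity, which is handled by the joint-image argument with $\chi_\ell$ above.
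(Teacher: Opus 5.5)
Your proposal is correct and is essentially the paper's argument. The joint-image computation for $(\Omega_\lambda,\chi_\ell)$ on $I_\lambda$ and $I_{\lambdabar}$ is the same as the paper's choice of $\sigma'\in I_\lambda$ and $\sigma''\in I_{\lambdabar}$ with $\theta_1(\sigma'')=\theta_1(\sigma')^{-1}$, which gives $\sigma'\sigma''\in G_{M(\zeta_\ell)}$ and $\Omega_\lambda(\sigma'\sigma'')=\theta_1(\sigma')^{\pm1}$; your inert-case inertia element and your commutator-subgroup argument for the ``moreover'' part also match the paper.

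Two small cleanups are needed. First, delete the abandoned false starts in the split case. Second, in the inert case the relation you allude to is $\Nm\circ\Omega_\lambda=\chi_\ell^3$, not $\chi_\ell$; the upper bound on the image follows most directly from $\rho_\lambda(G_{M(\zeta_\ell)})\subseteq\GU(A[\lambda])$ in \cref{thm:containment_for_rho_lambda}.
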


\begin{proof}
    Write $\Nm_{M/\QQ}(\lambda)=\ell^f$ where $f \in \{1,2\}$ is the inertia degree of $\lambda$.
    As $M$ is unramified at $\ell$, we have that $\theta_f \colon I_{\lambda'} \rightarrow \FF_\lambda^*$ is surjective, for $\lambda' \in \{\lambda,\overline{\lambda}\}$.
    In particular, there exists $\sigma' \in I_\lambda $ such that $\theta_f(\sigma')$ has order $\ell+(-1)^{f}$ and $\sigma'' \in I_{\overline \lambda}$ such that $\theta_f(\sigma'')=\theta_f(\sigma')^{-1}$.
    If $\ell$ is split in $M$, let $\sigma \coloneqq \sigma' \sigma''$.
    If $\ell$ is inert in $M$, let $\sigma \coloneqq \sigma'$.
    Using that $\theta_2^{\ell+1}=\theta_1$ and $\chi_\ell|_{I_\lambda}=\theta_1$, we have $\chi_\ell(\sigma)=1$.
    Thus $\sigma \in G_{M(\zeta_\ell)}$.

    We shall now apply \cref{prop:det_char_at_ell} to show $\Omega_\lambda(\sigma)$ has order $\ell+(-1)^f$ in $\FF_\lambda^*$.
    For $\ell$ split in $M$, consider $\Omega_\lambda(\sigma)=\Omega_\lambda(\sigma')\Omega_\lambda(\sigma'')=\chi_\ell(\sigma')^{\pm 1}$.
    For $\ell$ inert in $M$, we have $\Omega_\lambda(\sigma)=\theta_2(\sigma)^{\pm 1}$.
    This completes the proof of the first statement.

    Let us now suppose $\ell$ splits in $M$ and prove the final statement.
    If $\rho_\lambda(G_{M(\zeta_\ell)})= \GL_3(\ell)$ then it is clear by Theorem \ref{thm:containment_for_rho_lambda} that $\rho_\lambda(G_{M})= \GL_3(\ell)$.
    Thus let us suppose $\rho_\lambda(G_{M})= \GL_3(\ell)$.
    As $\Gal(M(\zeta_\ell)/M)$ is abelian, we see that $\rho_\lambda(G_{M(\zeta_\ell)})$ contains the commutator subgroup of $\rho_\lambda(G_{M})= \GL_3(\ell)$, that is, $\rho_\lambda(G_{M(\zeta_\ell)}) \supseteq \SL_3(\ell)$ since $\SL_3(\ell)$ is perfect \cite[Thm. 1.7]{Grove}.
    As $\det \circ \rho_{\lambda} \colon G_{M(\zeta_\ell)} \rightarrow \FF_\lambda^*$ is surjective, we deduce $\rho_\lambda(G_{M(\zeta_\ell)}) = \GL_3(\ell)$.
\end{proof}

For an admissible prime $\ell$ which splits in $M$ as $\lambda \lambdabar$, we will 
now show that surjectivity of the $\lambda$-torsion representation 
$\rho_\lambda$ implies surjectivity of the $\ell$-torsion representation 
over $M$. Note that this is a tautological statement for inert primes.

\begin{proposition}
    \label{prop:rholambdasurjectiveimpliesrhoellsurjective}
    Let $\ell$ be an admissible prime for an abelian threefold $A/M$ with 
    imaginary multiplication by $M$ of signature $(2,1)$, such that $\ell$ splits in $M$.
    Let $\lambda$ be a prime above $\ell$ in $M$.
    If $\rho_\lambda(\GM) = \GL_3(\ell)$, 
    then  
    $\rho_\ell(\GM) = \FF_\ell^* \times \GL_3(\ell)$.
\end{proposition}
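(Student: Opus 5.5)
The plan is to exploit the semi-direct/product structure from \cref{thm:modellimagesarecontainedin}. Write $\ell = \lambda\lambdabar$ and $G \coloneqq \rho_\ell(\GM)$. Since $\ell$ splits in $M$ it is odd, and \cref{thm:modellimagesarecontainedin} applies. Through the isomorphism $\varphi$ of \cref{prop:centralisers_grp_thy_split_case} it gives $G \subseteq \FF_\ell^* \times \GL_3(\ell)$, where $(\mu,T)$ acts on one eigenspace (say $A[\lambda]$) as $\mu T$ and on the other as $T^{-t}$. The first coordinate $\mu$ is the similitude factor of the Weil pairing. That factor is the mod-$\ell$ cyclotomic character, so the composite $\GM \to G \to \FF_\ell^*$ equals $\chi_\ell$. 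The relevant sign conventions (which eigenspace carries $\mu T$) do not matter for the argument below.

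The key step is to show $\{1\}\times\GL_3(\ell) \subseteq G$. By \cref{prop:det_surjects_and_GM_surjects_mod_lambda_iff_GMZetaell_surjects}, the hypothesis $\rho_\lambda(\GM)=\GL_3(\ell)$ implies $\rho_\lambda(G_{M(\zeta_\ell)})=\GL_3(\ell)$. For $\sigma \in G_{M(\zeta_\ell)}$ we have $\mu = \chi_\ell(\sigma)=1$, so $\rho_\ell(\sigma) = (1,T)$ with $\rho_\lambda(\sigma)$ equal to $T$ or to $T^{-t}$, depending on which eigenspace $A[\lambda]$ is. In either case, as $\sigma$ ranges over $G_{M(\zeta_\ell)}$, the element $T$ ranges over all of $\GL_3(\ell)$, since $T\mapsto T^{-t}$ is a bijection. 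Hence $\rho_\ell(G_{M(\zeta_\ell)}) = \{1\}\times \GL_3(\ell)$.

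It then remains to see that the projection $G \to \FF_\ell^*$ is surjective, i.e.\ that $\chi_\ell \colon \GM \to \FF_\ell^*$ is surjective, equivalently $[M(\zeta_\ell):M]=\ell-1$. This holds because $M\cap\QQ(\zeta_\ell)=\QQ$. Indeed, the unique quadratic subfield of $\QQ(\zeta_\ell)$ is ramified at $\ell$, whereas $M$ is unramified at $\ell$ because $\ell$ splits in $M$. Given that surjectivity, $G$ contains the kernel $\{1\}\times\GL_3(\ell)$ of the projection and surjects onto the quotient $\FF_\ell^*$, so $G=\FF_\ell^*\times\GL_3(\ell)$.

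The main point requiring care is the identification of the $\FF_\ell^*$-coordinate in \cref{prop:centralisers_grp_thy_split_case} with the similitude factor, and hence with $\chi_\ell$. This comes from the construction in that proof: the $\Sp(V)$-part corresponds to $\{1\}\times\GL_g(\ell)$, and $\mu$ is the multiplier. The rest is formal.
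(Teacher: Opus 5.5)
Your proof is correct and is essentially the paper's. Both get $\rho_\lambda(G_{M(\zeta_\ell)})=\GL_3(\ell)$ from \cref{prop:det_surjects_and_GM_surjects_mod_lambda_iff_GMZetaell_surjects}, use that the similitude factor is $\chi_\ell$ with $\chi_\ell(\GM)=\FF_\ell^*$, and conclude from the containment $\rho_\ell(\GM)\subseteq\FF_\ell^*\times\GL_3(\ell)$. The paper finishes with a cardinality count, whereas you identify $\rho_\ell(G_{M(\zeta_\ell)})=\{1\}\times\GL_3(\ell)$ explicitly via $(\mu,T)\mapsto(\mu T,T^{-t})$.

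One small slip: a split prime need not be odd ($2$ splits in $\QQ(\sqrt{-7})$), so for $\ell=2$ your appeal to \cref{thm:modellimagesarecontainedin} is not literally covered. The paper's proof passes over this case too. There the containment still holds, because $A[\lambda]$ and $A[\lambdabar]$ are complementary totally isotropic subspaces and $\FF_2^*$ is trivial.
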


\begin{proof}
    By \cref{prop:det_surjects_and_GM_surjects_mod_lambda_iff_GMZetaell_surjects}, we have $\rho_\lambda(G_{M(\zeta_\ell)})= \GL_3(\ell)$.
    Restricting the action of $G_{M(\zeta_\ell)}$ from $A[\ell]$ to $A[\lambda]$, we obtain a surjective homomorphism $\rho_\ell(G_{M(\zeta_\ell)}) \rightarrow \rho_\lambda(G_{M(\zeta_\ell)})=\GL_3(\ell)$.
    Moreover, owing to the mod-$\ell$ cyclotomic character $\chi_\ell$ being the similitude character of the Weil pairing on $A[\ell]$, we have a surjective homomorphism $\rho_\ell(G_M) \rightarrow \chi_\ell(G_M)$ with kernel $\rho_\ell(G_{M(\zeta_\ell)})$.
    As $M$ is unramified at $\ell$, $\chi_\ell(G_M)= \FF_\ell^*$.
    By Theorem \ref{thm:containment_for_rho_lambda}, $\rho_\ell(G_M) \subseteq \FF_\ell^* \times \GL_3(\ell)$.
    Hence for cardinality reasons, we deduce $\rho_\ell(G_M) = \FF_\ell^* \times \GL_3(\ell)$.
\end{proof}

\begin{theorem}
    \label{thm:big_lambda_image_gives_big_mod_ell_image_over_Q}
    Let $A/\QQ$ be an abelian threefold such that $\End^0(A_{\Qbar}) \cong M$.
    Let $\ell$ be an admissible prime for $A$ which is either odd, or inert in $M$.
    Suppose that $\rho_\lambda(G_M) = \CLe(A[\lambda]) \cong \CL^\varepsilon_3(\ell)$.
\begin{itemize}
    \item     If $\ell$ splits in $M$, then $\rho_\ell(G_\QQ) \cong \left( \FF_{\ell}^* \times \GL_3(\ell) \right) \rtimes \langle \gamma \rangle$ where $\gamma$ acts by $(\mu, T) \mapsto (\mu, \mu^{-1}T^{-t})$.
    \item  If $\ell$ is inert in $M$, then $\rho_\ell(G_\QQ) \cong \GammaU_3(\ell)$.
\end{itemize}
\end{theorem}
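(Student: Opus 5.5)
The plan is to bound $\rho_\ell(G_\QQ)$ from above by \cref{thm:modellimagesarecontainedin}, and from below by the hypothesis on $\rho_\lambda(G_M)$ together with an element of $G_\QQ\setminus G_M$. Comparing cardinalities then gives equality. Since $\End^0(A_{\Qbar})\cong M$, \cref{prop:endofield} shows all endomorphisms are defined over $M$. Hence $M\cong\End^0(A_M)$, and \cref{thm:modellimagesarecontainedin} (with $K=\QQ$, $KM=M$) gives
\[
\rho_\ell(G_\QQ)\subseteq \left(\FF_\ell^*\times\GL_3(\ell)\right)\rtimes\langle\gamma\rangle \quad\text{or}\quad \rho_\ell(G_\QQ)\subseteq\GammaU_3(\ell)
\]
in the split and inert cases respectively. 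The endomorphisms are not all defined over $\QQ$, again by \cref{prop:endofield}. So the map $\varrho\colon G_\QQ\to\Gal(M/\QQ)$ from the proof of \cref{thm:modellimagesarecontainedin} is surjective with kernel $G_M$. It follows that $\rho_\ell(G_M)$ has index at most $2$ in $\rho_\ell(G_\QQ)$, and it suffices to show (a) $\rho_\ell(G_M)$ is the full index-two subgroup, and (b) $\rho_\ell(G_\QQ)\neq\rho_\ell(G_M)$.

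For (a) in the split case, \cref{prop:rholambdasurjectiveimpliesrhoellsurjective} applies directly and gives $\rho_\ell(G_M)=\FF_\ell^*\times\GL_3(\ell)$. In the inert case, $A[\lambda]=A[\ell]$, so $\rho_\ell(G_M)=\rho_\lambda(G_M)=\CU_3(\ell)$ by hypothesis. Here $\CU_3(\ell)$ is the subgroup of $\FF_{\ell^2}$-linear elements of $\GammaU_3(\ell)$, which has index $2$ because the field automorphism group of $\FF_{\ell^2}$ has order $2$.

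For (b), take $\sigma\in G_\QQ\setminus G_M$. By the proof of \cref{thm:modellimagesarecontainedin}, $\rho_\ell(\sigma)\iota(\alpha)=\iota(\bar\alpha)\rho_\ell(\sigma)$. In the split case this means $\rho_\ell(\sigma)$ swaps the eigenspaces $A[\lambda]$ and $A[\bar\lambda]$, so it cannot lie in $\FF_\ell^*\times\GL_3(\ell)$, which preserves both. In the inert case it means $\rho_\ell(\sigma)$ is $\FF_{\ell^2}$-semilinear with respect to the nontrivial automorphism, hence not $\FF_{\ell^2}$-linear, and so it lies outside $\CU_3(\ell)$. In both cases $\rho_\ell(G_\QQ)$ properly contains $\rho_\ell(G_M)$, which has index $2$ in the ambient group, so the two are equal.

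I expect the main obstacle to be the bookkeeping in the split case. One must check that the extension of $\varphi$ in \cref{prop:centralisers_grp_thy_split_case} identifies $\langle\rho_\ell(\sigma),\rho_\ell(G_M)\rangle$ with the stated semidirect product. This holds because any $\gamma'$ exchanging $V_\phi$ and $V_{\phi'}$ generates the full semidirect product together with the centraliser, so it is immediate from that proposition. A smaller point is that the hypotheses of \cref{thm:modellimagesarecontainedin} (odd $\ell$, or $2$ inert) are exactly the ones assumed here.
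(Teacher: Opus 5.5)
Your proposal is correct. For the upper bound (\cref{thm:modellimagesarecontainedin}), for identifying $\rho_\ell(G_M)$ as the index-two subgroup (\cref{prop:rholambdasurjectiveimpliesrhoellsurjective} when $\ell$ splits, $A[\lambda]=A[\ell]$ when $\ell$ is inert), and for the final cardinality count, you follow the paper.

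The two arguments differ in step (b), where you show $\rho_\ell(G_\QQ)\neq\rho_\ell(G_M)$. This is equivalent to $M\subseteq\QQ(A[\ell])$.

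\textbf{The paper's route.} It obtains this containment by citing Silverberg's theorem, which says endomorphisms are defined over $K(A[n])$ for $n\ge 3$. It adds a separate result of the second author to cover $\ell=2$.

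\textbf{Your route.} You derive it from the twisted commutation relation $\rho_\ell(\sigma)\iota(\alpha)=\iota(\bar\alpha)\rho_\ell(\sigma)$, which is already established in the proof of \cref{thm:modellimagesarecontainedin}. Because $\ell$ is admissible, $\iota(\alpha)$ and $\iota(\bar\alpha)$ stay distinct on $A[\ell]$:
\begin{itemize}
\item when $\ell$ splits, they have distinct eigenvalues on the two eigenspaces;
\item when $\ell$ is inert, they act as Frobenius-conjugate generators of $\FF_{\ell^2}$.
\end{itemize}
Equivalently, $\iota(\alpha-\bar\alpha)$ has norm prime to $\ell$, so it is invertible on $A[\ell]$. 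Hence $\rho_\ell(\sigma)$ cannot commute with $\iota(\alpha)$. It therefore lies outside the centraliser that contains $\rho_\ell(G_M)$.

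\textbf{Comparison.} Your argument is self-contained. It handles $\ell=2$ inert in the same way as odd $\ell$, and it uses only the admissibility hypothesis, which is already assumed. The paper's citations are general statements about fields of definition of endomorphisms that do not depend on the imaginary-multiplication structure, but they cost two external references where a one-line internal argument suffices.
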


\begin{proof}
    By \cref{prop:rholambdasurjectiveimpliesrhoellsurjective}, for $\ell$ split in $M$, we have $\rho_\ell (G_M)\cong \FF_\ell^* \times \GL_3(\ell)$.
    For $\ell$ inert in $M$ we have $\rho_\ell(G_M)\cong \CU_3(\ell)$.
    By \cite[Thm. 2.4]{Silverberg} and \cite[Thm. 1.2]{Goodman:cyclic}, $M$ is contained in $\QQ(A[\ell])$.
    Thus for cardinality reasons, we have, by Theorem \ref{thm:modellimagesarecontainedin}, that $\rho_\ell(\GQ)$ is as claimed.
\end{proof}

\section{Criteria for large images}
\label{section:criteriaforfullimage}

Let $A$ be an abelian threefold defined over $M$.
\cref{thm:containment_for_rho_lambda} shows that for an admissible prime $\ell$, the image of $G_M$ under $\rho_\lambda$ lands in $\CLe_3(\ell)$ where $\varepsilon = (\frac{\Delta_M}{\ell})$.
In this section we will develop a practical method for proving $\rho_\lambda(G_M)\cong \CLe_3(\ell)$. 

To prove $\rho_\lambda(G_M)\cong\CLe_3(\ell)$, we need to rule out the possibility that $\rho_\lambda(G_M)$ is contained in any maximal subgroup of $\CLe(A[\lambda])$.
Thus in \cref{section_maximal_sbgps} we list the maximal subgroups of $\CLe_3(\ell)$.
In \cref{subsec:ContainmentTests}, we determine consequences of $\rho_\lambda(G_M)$ being contained in each maximal subgroup, and provide algorithms to check if these consequences are fulfilled. If $\End^0(A_{\overline{M}}) \cong M$, we show under \cref{assumption:surjective} that these algorithms always work. This assumption appears to be known to experts, but not written down anywhere.

\begin{assumption}
\label{assumption:surjective}
    Let $A/M$ be an abelian threefold with $\End^0(A_{\overline{M}}) \cong M$.
    Then there exists a rational prime $\ell$ split in $M$ and $\lambda$ a prime of $M$ lying over $\ell$ such that there is an isomorphism $\rho_{\lambda}(\GM) \cong \GL_3(\ell)$.
    
    Moreover, $\ell$ can be chosen to be as large as needed, and satisfying the congruence condition 
    $\ell \equiv 1 \pmod e$ for any integer $e$.
\end{assumption}

\subsection{Maximal subgroups of  $\CLe_3(\ell)$}
\label{section_maximal_sbgps}
Recall that we write $\CL^+=\GL$ and $\CL^- = \CU$, which are the similitude groups of the zero pairing and a non-degenerate Hermitian pairing respectively.

The classification of the maximal subgroups of $\CL^{\pm}_3(\ell)$ may be deduced from the tables found on pages 378 and 379 of \cite{Bray_Holt_Roney-Dougal}.
Let us briefly describe how one does so.
On page 378 (resp. 379) the maximal subgroups of $\SL^+_3(\ell) \coloneqq  \SL_3(\ell)$  (resp. $\SL^-_3(\ell) \coloneqq \SU_3(\ell)$) are listed.
The maximal subgroups of $\SL^\varepsilon_3(\ell)$ which extend to $\GL^\varepsilon_3(\ell)$ are identified by the presence of the diagonal automorphism $\delta$ (viewed as an element of the outer automorphism group of $\PSL^{\varepsilon}_3(\ell)$, see \cite[pages 33,34]{Bray_Holt_Roney-Dougal}) in the `stab' column.
The table also notes the order of the outer automorphism $\delta$ to be $d = \gcd(3, \ell - \varepsilon)$. If $d = 1$, then $\delta$ is trivial and all rows satisfy the criterion.

To deduce the maximal subgroups of $\CU_3(\ell)$ from those of $\GU_3(\ell)$ note that any maximal subgroup of either group not containing the centre must contain $\SU_3(\ell)$ (see for example \cite[Lemma 3.15]{Goodman_superelliptic}).
As is a general fact in group theory, the subgroups of $\CU_3(\ell)$ (resp. $\GU_3(\ell)$) containing the centre correspond bijectively to those of their projectivisations via the quotient and inverse image maps.
Thus the maximal subgroups of $\CU_3(\ell)$ either contain $\SU_3(\ell)$ or arise from the maximal subgroups of $\GU_3(\ell)$ via adjoining the group of scalar matrices over $\FF_{\ell^2}^*$.

One finds that the maximal subgroups of $\GL^\varepsilon_3(\ell)$ not containing $\SL^\varepsilon_3(\ell)$ fall into one of six families.
Namely, $\C_1$ reducible subgroups, $\C_2$ imprimitive subgroups, $\C_3$ field extension subgroups, $\C_6$ symplectic type subgroups, $\C_8/\C_5$-type orthogonal subgroups, or $\mathcal{S}$-type subgroups.
The last two families occur only when the order $d=\gcd(3,\ell-\varepsilon)$ of the automorphism $\delta$ is $1$, i.e., $\ell \not\equiv \varepsilon \pmod 3$.
So it is worth pointing out that if $M=\Q(\zeta_3)$, then $\varepsilon = \left( \frac{\Delta_M}{\ell} \right) = \left( \frac{-3}{\ell} \right)$, and no maximal subgroups of $\GLe_3(\ell)$ come from the last two families.

We shall now proceed to describe these maximal subgroups.
Whilst doing so, we will make use of a few basic facts from the theory of the classical groups, all of which may be found in \cite{KleidmanLiebeck}.

\subsubsection{$\C_1$ reducible subgroups}
\label{section:reducible_sbgps}
The groups in this case are stabilisers of minimal chains of submodules.
That is, they stabilise a chain \[0 \subseteq U \subseteq V\]
where $U$ has dimension one or two.

For $\GL_3(\ell)$ this leads to two conjugacy classes of maximal subgroups in $\GL_3(\ell)$ isomorphic to $\Fl^2 \rtimes \left( \GL_1(\ell) \times \GL_2(\ell) \right)$.
We note that these conjugacy classes cannot be distinguished by the characteristic polynomials of their elements.
However, the inverse transpose map exchanges these two classes.
The description given in \S \ref{subsec:restrictions_on_residual_images} thus shows that if $\GM$ stabilises a 2-dimensional subspace of $A[\lambda]$, then it must stabilise a 1-dimensional subspace of $A[\overline{\lambda}]$.
In this way, we only need to rule out the existence of 1-dimensional subrepresentations to prove the irreducibility of the action of $\GM$ on $A[\lambda]$.

Suppose $H \leq \GU_3(\ell)$ stabilises an irreducible submodule $U$ of $V$.
Then either $U \cap U^\perp =0$ or $U$, i.e., either $U$ is non-degenerate or totally isotropic.
Suppose the former holds.
Then $U^\perp$ is also non-degenerate and $V = U \oplus U^\perp$ as $H$-modules.
It follows that $H$ is contained in $\GU_1(\ell) \times \GU_2(\ell)$.

Otherwise $U$ is totally isotropic, and as $V$ has dimension three, maximal totally isotropic subspaces have dimension one and thus $U$ is one dimensional.
It follows $U^\perp$ is two dimensional, and $H$ stabilises a chain $0 \subseteq U \subseteq U^{\perp} \subseteq V$.
An application of Witt's Lemma shows $U^\perp/U$ may be equipped with a non-degenerate Hermitian form.
In fact, using further properties of the pairing, one can show $H$ is contained in a group isomorphic to $\ell^{1+2} \rtimes \left(\GU_1(\ell) \times \GL_1(\ell^2) \right)$ where $\ell^{1+2}$ denotes a non-abelian group of order $\ell^3$.
The eigenvalues of an element in this maximal subgroup are of the form $\alpha, \alpha^{-\ell},\beta$ where $\alpha, \beta \in \FF_{\ell^2}^*$ and $\beta^{1+\ell}=1$.
In particular, up to exchanging $\alpha$ with $\alpha^{-\ell}$, one may assume the eigenvector corresponding to $\alpha$ gives the fixed subspace.

We note that in either situation the semisimplification $V^{ss}$ contains a one dimensional, non-degenerate constituent. There is only one conjugacy class for each of the above types of maximal subgroup in $\GU_3(\ell)$.
For $\ell = 2$, the group $\GU_1(2) \times \GU_2(2)$ is not maximal in $\GU_3(2)$.

\subsubsection{$\C_2$ imprimitive subgroups}
Here we have stabilisers of orthogonal decompositions $V= \oplus_{i=1}^3 V_i$, that is the groups falling into this case permute the elements of the set $\{V_1,V_2,V_3\}$.

This leads to subgroups of the form $\GL^\varepsilon_1(\ell)^3 \rtimes S_3$ which are maximal in $\GLe_3(\ell)$ unless $\varepsilon = +1$ and $\ell = 2$.
They constitute a unique conjugacy class.

\subsubsection{$\C_3$ field extension subgroups}
\label{section:fieldextnsbgps}
Viewing $V$ as a one dimensional vector space over $\FF_{\ell^{3}}$ for $\varepsilon=+$ (resp. over $\FF_{\ell^{6}}$ for $\varepsilon=-$) gives us an embedding of $\GLe_1(\ell^3)$ into $\GLe_3(\ell)$.
Its normaliser, isomorphic to $\GL_1(\ell^3) \rtimes \Gal(\FF_{\ell^3}/\Fl)$ for $\varepsilon=+1$, and $\GU_1(\ell^3) \rtimes \Gal(\FF_{\ell^6}/\FF_{\ell^2})$ for $\varepsilon=-1$, is a maximal subgroup of $\GLe_3(\ell)$, except for $\GU_3(2)$ and $\GU_3(3)$.
Groups arising in this manor give a single conjugacy class of maximal subgroups.

\subsubsection{$\C_6$ symplectic type subgroups}
This case only occurs for odd $\ell \equiv 4\varepsilon, 7\varepsilon \pmod{9}$.
Here we have a single conjugacy class of maximal subgroups.
The image of such a group in $\PGLe_3(\ell)$ is isomorphic to $C_3^2 \rtimes \SL_2(3) \cong \ASL_2(3)$ which has order $6^3$.

\subsubsection{$\C_8/\C_5$-type orthogonal subgroups}
The maximal subgroups in this class stabilise a non-degenerate symmetric bilinear pairing up to scalars. They are isomorphic to the direct product of the center of $\GLe_3(\ell)$ and $\SO_3(\ell)$.
In the notation of \cite{Bray_Holt_Roney-Dougal}, these subgroups are of type $\C_8$ if $\varepsilon = +1$, and of type $\C_5$ if $\varepsilon = -1$.
For $\ell > 2$, there is a single conjugacy class of such maximal subgroups when $\gcd(3,\ell-\varepsilon)=1$, i.e., when $\ell \not\equiv \varepsilon \pmod 3$.

\subsubsection{$\mathcal{S}$-type subgroups}
\label{subsec:ClassS_maximalgroup}
There is a family of $\mathcal{S}$-type maximal subgroups, whose image in $\PGLe_3(\ell)$ is isomorphic to $\PSL_2(7)$, which has order $168$.
For $\ell > 2$, they form a single conjugacy class exactly when $\ell \not\equiv \varepsilon \pmod{3}$ and $\left(\frac{-7}{\ell}\right) = \varepsilon$. For $\varepsilon = +1$, this is equivalent to $\ell \equiv 2, 8, 11 \pmod{21}$. For $\varepsilon = -1$, this is equivalent to $\ell \equiv 10, 13, 19 \pmod{21}$ or $\ell=3$.

\subsection{Containment in maximal subgroups}
\label{subsec:ContainmentTests}

From now on,
let $A/\QQ$ be a polarised abelian threefold with potential imaginary multiplication by $M$ of signature $(2,1)$,
and $\ell$ be an admissible prime for $A$.
While all algorithms and results in this section apply for IM abelian threefolds of signature $(2,1)$ defined over $M$, we have chosen to write them specifically for $A/\Q$, because it simplifies the discussion of the algorithms and we have carried out these computations only for such abelian threefolds.

Recall that for a prime $\lambda$ of $M$ above $\ell$, the $\lambda$-torsion $A[\lambda]$ is a three dimensional $\FF_\lambda$-vector space \cite[Prop 2.2.1]{ribet_RM}, and we know by \cref{thm:containment_for_rho_lambda}, that the image $\rho_\lambda(\GM)$ of the mod-$\lambda$ representation $\rho_\lambda \colon \GM \rightarrow \Aut_{\FF_\lambda}(A[\lambda])$ is contained in $\CLe_3(\ell)$ where $\varepsilon = (\frac{\Delta_M}{\ell})$.
We introduce the following terminology to simplify the discussion ahead.

\begin{definition}
\label{def:C1prime}
    We say an admissible prime $\ell$ is a \emph{$\C_1$-prime} or a \emph{reducible prime} for $A$, if $A[\lambda]$ is not 
    absolutely irreducible as a $\GM$-module for some prime $\lambda$ of $M$ above $\ell$, i.e., 
    $A[\lambda] \otimes_{\FF_\lambda} \overline{\FF}_\lambda$ has a proper $\overline{\FF}_\lambda[\GM]$-submodule.
    
    In particular, $\ell$ is a \emph{$\C_1$-prime} if $\rho_\lambda(G_M)$ is contained in a $\C_1$-type maximal subgroup of $\CLe_3(\ell)$.
    The converse is not true though, as $\rho_\lambda(G_M)$ not being contained in a $\C_1$-type maximal subgroup only implies $A[\lambda]$ is irreducible over $\FF_\lambda$, not absolutely irreducible.
\end{definition}

\begin{definition}
\label{def:C2C3primes}
    We say an admissible prime $\ell$ is a \emph{$\C_2$-prime} (respectively $\C_3$-prime or $\C_6$-prime) for $A$, if it is not a $\C_1$-prime 
    and $\rho_\lambda(G_M)$ is contained in a $\C_2$-type (respectively $\C_3$-type or $\C_6$-type) maximal subgroup of $\CLe_3(\ell)$
    for some prime $\lambda$ of $M$ above $\ell$.
\end{definition}

\begin{definition}
\label{def:C8C5primes}
    We say an admissible prime $\ell$ is a \emph{$\C_8/\C_5$-prime} (respectively $\mathcal{S}$-prime) for $A$, if $\rho_\lambda(G_M)$ is contained in a $\C_8/\C_5$-type (respectively $\mathcal{S}$-type) maximal subgroup of $\CLe_3(\ell)$ for some prime $\lambda$ of $M$ above $\ell$.
\end{definition}

The main algorithms in this section are \cref{Alg:C1,Alg:C2,Alg:C3,Alg:CO}.
If $\End^0(A_{\overline{M}}) \cong M$, these algorithms return finite sets of rational primes which are guaranteed to contain every $\C_1$, $\C_2, \C_3$ and $\C_8/\C_5$-prime respectively. 
The returned sets could contain extraneous primes, and the goal of \cref{Alg:C2WeedOut,Alg:C3WeedOut} is to remove any such extraneous 
$\C_2$ or $\C_3$-prime.
We also show that the only possible $\C_6$-primes are $5$ and $7$, and
the only possible $\mathcal{S}$-prime is $3$.

We start by setting some notation and recalling simple facts about $L$-polynomials and characteristic 
polynomials of Frobenius elements in our setting of abelian threefolds over $\Q$ with potential imaginary multiplication by $M$ of signature $(2,1)$. 

\subsubsection{Shape of $L$-polynomial factorisations}
\label{subsec:LPolyShape}

For a polynomial $h$, and a positive integer $n$, we denote by $h^{(n)}$ the polynomial 
whose roots are $n$-th powers of the roots of $h$, i.e., $h^{(n)}(x) = \Res_t(h(t),x-t^n)$.
Consider a rational prime $p$ of good reduction for the abelian threefold $A$. Let
\begin{align}
    \label{eq:LPoly}
    L_{p,A}(x) = 1 + a_p x + b_p x^2 + c_p x^3 + p b_p x^4 + p^2 a_p x^5 + p^3 x^6
\end{align}
denote the $L$-polynomial of $A$ at $p$. Suppose $p$ is unramified in $M$ and let $\ell \ne p$ be any prime.
We define 
$f_p$ to be the characteristic polynomial of $\Frob_p$ (or $\Frob_p^2$) 
acting on $T_{\ell}(A)$ if $p$ splits (or is inert) in $M$.
So in either case, $f_p$ is the characteristic polynomial under $\rho_{\ell^\infty}$ of an element of $\GM$.
Then we know that 
$f_p$ is independent of $\ell$, and furthermore
\begin{align}
    \label{eq:charpolofFrobaction}
    f_p(x) = \begin{cases}
        x^6L_{p,A}(1/x) \quad \text{if } p \text{ splits in } M\\
        x^6L_{p,A}^{(2)}(1/x) \quad \text{if } p \text{ is inert in } M.
    \end{cases}
\end{align}
By taking the resultant of $L_{p,A}(t)$ and $x-t^2$ with respect to $t$, we note that
\begin{align*}
    L_{p,C}^{(2)}(x) = 1 - (a_p^2 - 2b_p)x + \cdots - (a_p^2 - 2b_p)p^4x^5 + p^6x^6.
\end{align*}
Since the Galois action of $\GM$ commutes with the action of 
$\iota(M) \otimes \QQ_\ell$
on $T_\ell(A)$, and we may take $\ell$ to split in $M$, i.e, $\iota(M) \otimes \QQ_\ell \simeq \QQ_\ell \times \QQ_\ell$, 
we deduce that $f_p$ factorises over $\OM$ as a product $f_{p,1}f_{p,2}$ 
of cubic polynomials $f_{p,i} = x^3 + a_{p,i} x^2 + b_{p,i} x + c_{p,i}$
that are Galois conjugate, i.e., $f_{p,2} = \overline{f_{p,1}}$.

\begin{remark}
    \label{rem:inertLPolyshape}
    For a prime $p$ that is inert in $M$, $L_{p,A}(x)$ is a polynomial in $x^2$, and 
    hence $f_p(x) = x^6L_{p,A}^{(2)}(1/x)$ is the square of a cubic polynomial 
    $f_{p,1}=f_{p,2} \in \Z[x]$ such that $x^6L_{p,A}(1/x) = f_{p,1}(x^2)$. 
    Furthermore $f_{p,1}(x)$ is reducible and has a linear factor $x+p$ 
    (see \cite[Lemma 3.8]{AFP22} for the case of Jacobians of Picard curves, and \cite[Lemma 4.3]{FG25}
    for the general case).
\end{remark}

\subsubsection{$\C_1$ reducible subgroups}
\label{subsec:C1test}

Our approach to ruling out $\C_1$-primes is inspired by Billerey's method for proving the reducibility of the $\ell$-torsion representations of
elliptic curves over number fields \cite{Billerey_irreductible}.
Forthcoming work \cite{Goo26_1dimsubreps}
of the second-named author further generalises this to rule out $1$-dimensional factors from residual representations 
of a large class of strictly compatible systems of Galois representations.

Suppose that $\ell$ is a $\C_1$-prime. Let $\lambda$ be a prime of $M$ above $\ell$ with inertia degree $f$. 
Let $\FF_\lambda$ denote the corresponding residue field, so that $f = [\FF_\lambda : \FF_\ell]$. 
As in \cref{subsection:inertia_at_ell}, let $\theta_n$ denote a fixed fundamental character of level $n$. 
As $A[\lambda] \otimes_{\FF_\lambda} \overline{\FF}_\lambda$ is a reducible $\overline{\FF}_\lambda[\GM]$-module of dimension $3$, 
the dimensions of its Jordan--H\"older factors are either $(1,1,1)$ or $(1,2)$, so one of the 
Jordan--H\"older factors is one dimensional. Let us call the corresponding character $\tau \colon \GM \rightarrow \overline{\FF}_\lambda^*$.

\begin{lemma}
\label{lem:C1local}
    Suppose $\tau \colon \GM \rightarrow \overline{\FF}_\lambda^*$ is a $1$-dimensional Jordan--H\"older factor of $A[\lambda]$ for a prime
    $\lambda$ of $M$ above $\ell$.
    Then there exist $a,b \in \{0,1\}$ such that 
    \begin{enumerate}
        \item $\tau|_{I_\lambda} = \theta_1^a$ and $\tau|_{I_{\lambdabar}} = \theta_1^b$ if $\ell$ is split in $M$,
        \item $\tau|_{I_\lambda} = \theta_2^{a+b\ell}$ if $\ell$ is inert in $M$.
    \end{enumerate}
    In case (1), the local characters $\calO_{M_\lambda}^*,\calO_{M_{\lambdabar}} ^* \rightarrow \overline{\FF}_\lambda^*$
    associated to $\tau|_{I_\lambda}$ and $\tau|_{I_{\lambdabar}}$ by local class field theory are
    $x \mapsto x^{-a} \pmod \lambda$ and $x \mapsto x^{-b} \pmod \lambdabar = \bar x^{-b} \pmod{\lambda}$ respectively.
    In case (2), the associated local character is $x \mapsto x^{-a}\bar x^{-b} \pmod{\lambda}$.
\end{lemma}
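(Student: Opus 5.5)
The plan is to read off the restriction of $\tau$ to inertia from the Raynaud-type description of tame inertia characters used in the proof of \cref{prop:inertia_at_l_description}. Then I will translate the result into the local class field theory side.

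First I would note that $\tau|_{I_\lambda}$ is a Jordan--H\"older constituent of $(\rho_\lambda|_{I_\lambda})^{ss}$. Wild inertia acts trivially on it, so it factors through tame inertia. By the results invoked in that proof (\cite[Thm. 3.4.3]{Raynaud_type_p} together with \cite[Exposé IX]{SGA7}, as in \cite[Lemma 4.9]{Larson_Vaintrob}), we can write $\tau|_{I_\lambda} = \theta_n^{e}$ with $e=\sum_{j<n} e_j\ell^j$ and $e_j\in\{0,1\}$. This is where semistability at $\ell$ is used: either $\ell$ is assumed semistable admissible, or, if not, one first passes to a finite extension where $A$ is semistable and controls the effect, though I expect the lemma is intended under the semistable hypothesis. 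The key extra input is that $\tau$ is a character of the global group $\GM$ with values in $\overline{\FF}_\lambda^*$. Since $\tau$ is a one-dimensional constituent of a $\GM$-module defined over $\FF_\lambda$, its $\Gal(\overline{\FF}_\lambda/\FF_\lambda)$-conjugates are also constituents. More to the point, the image of $I_\lambda$ under $\tau$ is normalised by a Frobenius at $\lambda$, which acts on $\theta_n$ by raising to the power $\Nm(\lambda)=\ell^f$. Because $\tau$ is abelian on the decomposition group, $\tau|_{I_\lambda}$ must be invariant under $\theta\mapsto\theta^{\ell^f}$. Hence it factors through a fundamental character of level $f$, and by \cref{prop:classification_of_tame_characters} we can rewrite $e$ with $n=f$.

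With $n=f$ and digits in $\{0,1\}$ this gives the two cases. For split $\ell$ we have $f=1$, so $\tau|_{I_\lambda}=\theta_1^a$ with $a\in\{0,1\}$. Applying the same argument at $\lambdabar$ (inertia at $\lambdabar$ also acts on $A[\lambda]$, and $\ell$ is unramified there) gives $\tau|_{I_{\lambdabar}}=\theta_1^b$. The one subtlety is that at $\lambdabar$ the Raynaud description is for the finite flat group scheme over $\calO_{M_{\lambdabar}}$, and $A[\lambda]$ is still a finite flat subquotient there. For inert $\ell$ we have $f=2$, so $\tau|_{I_\lambda}=\theta_2^{a+b\ell}$.

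Finally, for the class field theory statement I would use the standard fact (Serre, \cite[Prop. 3]{Serre72}) that via the local Artin map, $\theta_f$ corresponds on $\calO_{M_\lambda}^*$ to $x\mapsto x^{-1}\bmod\lambda$ in $\FF_{\ell^f}^*$, with the sign fixed by the normalisation in which uniformisers go to geometric Frobenius. Then $\theta_f^{\ell}$ corresponds to $x\mapsto x^{-\ell}$, which is $\bar x^{-1}$ modulo $\lambda$ when $\ell$ is inert. Taking products gives the formulas, and for $\lambdabar$ one uses the identification $\calO/\lambdabar\cong\calO/\lambda$ via conjugation. The main obstacle is the level-reduction step: one has to justify cleanly that a global character restricted to inertia is Frobenius-stable, i.e. that $\tau(\Frob\,\sigma\,\Frob^{-1})=\tau(\sigma)$ forces $e\ell^f\equiv e\pmod{\ell^n-1}$. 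One also has to keep the sign conventions of the Artin map consistent.
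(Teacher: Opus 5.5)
Your argument is correct in outline and close to the paper's, though the route to the exponents differs slightly.

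\textbf{Comparison with the paper.} The paper uses local class field theory to see that $\tau|_{I_\lambda}$ kills the pro-$\ell$ group $1+\lambda\calO_{M_\lambda}$. Hence it factors through $\calO_{M_\lambda}^*/(1+\lambda\calO_{M_\lambda})\cong\mathbb{F}_\lambda^*$ and is a power of $\theta_f$. The paper then reads off the exponents from the explicit list of constituents in \cref{prop:inertia_at_l_description}.

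Your invariance under conjugation by a Frobenius lift is the same level-reduction fact, phrased group-theoretically. You then use only the Raynaud digit bound rather than the full list, which is slightly more self-contained. However, the step ``rewrite $e$ with $n=f$'' needs an argument you did not give:
\begin{itemize}
\item For $\ell\ge 3$, the numbers $\sum_{j<n}e_j\ell^j$ with $e_j\in\{0,1\}$ lie in $[0,(\ell^n-1)/(\ell-1)]$, so they are pairwise incongruent modulo $\ell^n-1$.
\item Multiplication by $\ell^f$ cyclically shifts the digits.
\item Hence $e\ell^f\equiv e$ forces the digit string to be periodic with period $\gcd(n,f)$.
\end{itemize}
Only this periodicity guarantees that the level-$f$ exponent again has digits in $\{0,1\}$. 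As in the paper, you need $\ell$ to be semistable admissible. This is implicit in the lemma and explicit where it is applied, in \cref{prop:C1test}.

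\textbf{The sign convention is wrong.} The identity $\theta_f(\Art_\lambda(x))=x^{-1}\bmod\lambda$ on $\calO_{M_\lambda}^*$ holds when $\Art_\lambda$ sends uniformisers to \emph{arithmetic} Frobenius. That is the convention of \cite{Neu99} used in the paper. The $x^{-1}$ in \cite[Prop.~3]{Serre72} also forces that convention. For example, with this convention $\Art_{\mathbb{Q}_\ell}(u)$ acts on $\mu_{\ell^\infty}$ by $\zeta\mapsto\zeta^{u^{-1}}$.

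With the geometric normalisation you state, $\Art_\lambda$ is replaced by its inverse, and $\theta_f\circ\Art_\lambda$ becomes $x\mapsto x$. You would then obtain $x^{a}$ and $x^{a}\bar x^{b}$, not the lemma's formulas.

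This matters downstream. Suppose your pairing (geometric normalisation together with the $x^{-1}$ formula) is used in the product-formula argument of \cref{prop:C1test}. Then $\Art_{\mathfrak{p}}(\gamma)=\mathrm{Frob}_{\mathfrak{p}}^{-h}$ there. The argument would give $\tau^{n_Mh}(\mathrm{Frob}_{\mathfrak{p}})\equiv\alpha^{-n_M}$ instead of $\alpha^{n_M}$, which is not what \cref{Alg:C1} tests.

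Switch to the arithmetic normalisation, and make it consistent with $\Art_{\mathfrak{p}}(\gamma)=\mathrm{Frob}_{\mathfrak{p}}^{h}$. With that change, the rest of your proof goes through.
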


\begin{proof}
    Under local class field theory, the decomposition group at $\lambda$ corresponds to the profinite completion of 
    $M_\lambda^*$, and the inertia subgroup at $\lambda$ corresponds to $\OO_{M_\lambda}^*$.
    Since $\overline{\FF}_\lambda^*$ has no element of order $\ell$, the character $\tau$ is tamely ramified at $\lambda$, 
    and hence $\tau|_{I_\lambda}$ factors through $\OO_{M_\lambda}^*/1+\lambda \OO_{M_\lambda} \simeq \FF_\lambda^*$.
    As the residual degree of $\ell$ is at most 2, $\tau|_{I_\lambda}$ factors through $\theta_2$, that is, $\tau|_{I_\lambda}= \theta_2^{m}$ for some $m$.
    Applying \cref{prop:inertia_at_l_description}, we deduce the first statement for $\tau|_{I_\lambda}$. Replacing $\lambda$ by $\lambdabar$ proves the first statement for $\tau|_{I_{\lambdabar}}$.

    The last assertion follows from \cite[Chap 5, Prop 3.1 and 3.4]{Neu99} which say that
    $\theta_f(\Art_\lambda(x)) = x^{-1}$ for $x \in \OO_{M_\lambda}^*$, where $x$ is identified with its
    image in $\FF_\lambda$, under the normalisation that the local Artin map sends uniformisers to
    arithmetic Frobenius.
\end{proof}

Let $T$ denote the finite set of non-admissible primes for $A$. For a prime $q \geq 3, q \not\in T$,
let $e_q$ denote the exponent of the group $\CLe_3(q)$, where as usual
$\varepsilon = (\frac{\Delta_M}{q})$.
Set
\begin{align}
    \label{eq:semistablereductionover}
    n_M = \lcm\limits_{p \text{ prime}} \ \gcd\limits_{\substack{q \geq 3 \text{ prime} \\q \neq p, q \not\in T}} e_q.
\end{align}

\begin{remark}
    Note that even though this definition depends on $A$ through 
    the finite set of excluded primes $T$, the quantity $n_M$ only depends on the 
    field $M$. Excluding any finite set of primes from the $\gcd$ term does not 
    affect its value, and a simpler alternative definition is
    $n_M = \gcd\limits_{q \geq 3,~q \nmid \disc(M)} e_q$.
    The main cases of our computational interest here are $M = \QQ(i), \QQ(\sqrt{-2})$
    and $\QQ(\sqrt{-3})$, for which, the values of $n_M$ are $24, 24$ and $72$ respectively.
\end{remark}

\begin{lemma}
\label{lem:C1ram}
    Let $\tau \colon \GM \rightarrow \overline{\FF}_\lambda^*$ be a $1$-dimensional Jordan--H\"older factor of $A[\lambda]$ for a prime
    $\lambda$ of $M$ above $\ell$. Then $\tau^{n_M}$ is unramified at all primes $\p \nmid \ell$ of $M$.
\end{lemma}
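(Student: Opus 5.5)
Fix a prime $\p \nmid \ell$ of $M$ above a rational prime $p$. We must show that $\tau^{n_M}$ is trivial on the inertia group $I_\p$. The plan is to use the $\ell$-independence of the inertia action to compare with an auxiliary prime $q$. We then use the exponent $e_q$ of $\CLe_3(q)$ to bound the order of the finite part of the inertia action.

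First, recall Grothendieck's semistable reduction theorem and the theory of the Néron model (\cite[Exposé IX]{SGA7}). There is a finite quotient $\Phi_\p$ of $I_\p$ such that for every prime $q \neq p$, the action of $I_\p$ on $V_q(A)$ becomes unipotent when restricted to the kernel $J_\p$ of $I_\p \to \Phi_\p$. One may take $J_\p$ to be the inertia of the minimal extension over which $A$ acquires semistable reduction at $\p$. Crucially, this finite quotient is independent of $q$ (Serre--Tate / Grothendieck). Thus on $A[\lambda]^{ss}$, the group $J_\p$ acts trivially: a unipotent action has all Jordan--Hölder factors trivial, and reduction preserves this. So $\tau|_{I_\p}$ factors through $\Phi_\p$, and it suffices to show that the exponent of $\Phi_\p$ divides $n_M$, at least after passing to the relevant local factor.

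Second, we bound the image of $I_\p$. For any prime $q \geq 3$ with $q \neq p$ and $q \notin T$, the group $I_\p$ acts on $A[q]$. By Raynaud's criterion, the kernel of $\rho_q|_{I_\p}$ acts unipotently for $q\ge 3$, i.e. $A$ acquires semistable reduction over $M(A[q])$. So the minimal semistable extension is contained in $M_\p(A[q])$, and $\Phi_\p$ is a quotient of $\rho_q(I_\p)$ modulo a normal subgroup. Let $\mu$ be the prime above $q$. By \cref{thm:containment_for_rho_lambda} and \cref{thm:modellimagesarecontainedin}, $\rho_q(I_\p)$ embeds into $\FF_q^*\times\GL_3(q)$ or $\CU_3(q)$. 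Since $I_\p\subset G_{M(\zeta_q)}$ fails in general, we instead use the full similitude groups, and we must check that the exponent of the relevant group divides $e_q$. In the split case, $\FF_q^*\times \GL_3(q)$ has the same exponent as $\GL_3(q)$, since $q-1$ divides the exponent of $\GL_3(q)$. Hence the exponent of $\Phi_\p$ divides $e_q$ for every such $q\neq p$, so it divides $\gcd_{q\ne p,\, q\notin T} e_q$, which divides $n_M$ by \eqref{eq:semistablereductionover}. Therefore $\tau^{n_M}(I_\p)=1$.

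The main obstacle is the first step. Cleanly justifying that $\tau$ is trivial on $J_\p$ requires care: we pass from unipotence on $V_q$ for $q\neq\ell$ to the $\ell$-adic (or $\lambda$-adic) lattice. This relies on the statement that semistability over a field $L$ is a property of $A_L$, and hence detected at any prime $\ne p$. In particular $J_\p$ acts unipotently on $T_\lambda(A)$, so trivially on the semisimplification of $A[\lambda]$. Note that $\ell$ itself may equal one of the $q$; this causes no harm. A secondary subtlety is the case $p=\ell$'s complement: $p$ is excluded from the gcd precisely so that every $q$ used is different from $p$, which is why the lcm over $p$ appears in the definition of $n_M$.
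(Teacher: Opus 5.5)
Your proof is correct and follows essentially the same route as the paper: Raynaud's criterion gives semistable reduction over $M(A[q])$ for $q \neq p$, and Grothendieck's inertial criterion (using $\ell \neq p$) makes the corresponding inertia act unipotently on $T_\ell(A)$, hence trivially on $\tau$. The exponent bound $e_q$ coming from \cref{thm:modellimagesarecontainedin} then gives $\tau^{e_q}|_{I_\p}=1$ for every such $q$. The only cosmetic difference is that you route this through a $q$-independent finite quotient $\Phi_\p = I_\p/J_\p$, whereas the paper works directly with $I_w = I_\p \cap G_{M(A[q])}$ for each $q$ separately, which avoids needing the existence of a unique minimal semistability extension.
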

\begin{proof}
    Let $\p \nmid \ell$ be a prime of $M$ with residue characteristic $p$.
    Let $q \geq 3, q \neq p, q \not\in T$ be a prime.
    Raynaud's theorem \cite[Prop. 4.7]{SGA7} says that $A$ attains semistable reduction over $M(A[q])$ at any prime not above $q$.
    By \cref{thm:modellimagesarecontainedin}, we know that the Galois group
    $\Gal(M(A[q])/M)$ is a subgroup of $\FF_q^* \times \CLe_3(q)$ or $\CLe_3(q)$, and hence its exponent 
    divides $e_q$.
    Let us fix an embedding $\Qbar \hookrightarrow \Qbar_\p$. Let $w$ 
    denote the prime of $M(A[q])$ above $\p$ induced by the chosen embedding.
    Then the inertia group $I_w$ at $w$ is $I_\p \cap G_{M(A[q])}$.
    So for any $\sigma \in I_\p$, we have $\sigma^{e_q} \in I_\p \cap G_{M(A[q])} = I_w$.
    Since $A$ has semistable reduction at $w$ and $w \nmid \ell$, 
    Grothendieck's inertial criterion \cite[Prop. 3.5]{SGA7} implies that 
    $I_w$ acts unipotently on the Tate module $T_\ell(A)$, and hence on 
    $A[\ell]$. Since $\tau$ is a $1$-dimensional subquotient, we deduce that 
    $\tau|_{I_w} = 1$ and hence $\tau^{e_q}|_{I_\p} = 1$. Therefore the order of $\tau|_{I_\p}$ divides the $\gcd$ term in 
    \cref{eq:semistablereductionover} and so it divides $n_M$.
\end{proof}

\begin{proposition}
    \label{prop:C1test}
    Let $\ell$ be a semistable admissible prime and $\lambda$ be a prime of $\OO_M$ above $\ell$.
    Suppose that the action of $G_M$ on $A[\lambda]$ is not absolutely irreducible, and let
    $\tau \colon \GM \rightarrow \overline{\FF}_\lambda^*$ be a $1$-dimensional Jordan--H\"older factor of $A[\lambda]$.
    Let $\p$ be a prime of $M$ of residue characteristic $p \neq \ell$, such that $A$ has good reduction
    at $\p$. Suppose $\p^h = (\gamma)$ for some $\gamma \in \OM$ and $h \geq 1$. Then there exists 
    $\alpha \in \{1, \gamma, \bar \gamma, \gamma \bar \gamma\}$
    such that
    \begin{align}
        \tau^{n_Mh}(\Frob_\p) \equiv \alpha^{n_M} \pmod \lambda
    \end{align}
\end{proposition}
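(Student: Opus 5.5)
Following Billerey's method, the plan is to view $\tau^{n_M}$ as a character of the idèle class group of $M$ via global class field theory, and to evaluate it on a global element built from $\gamma$.

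By \cref{lem:C1ram}, $\tau^{n_M}$ is unramified at every prime $\p' \nmid \ell$. At the primes above $\ell$, \cref{lem:C1local} describes $\tau|_{I_\lambda}$ and $\tau|_{I_{\lambdabar}}$ (or $\tau|_{I_\lambda}$ in the inert case) as explicit local characters of $\OO_{M_\lambda}^*$: namely $x\mapsto x^{-a}$, $x \mapsto \bar x^{-b}$ in the split case, and $x \mapsto x^{-a}\bar x^{-b}$ in the inert case. Since $\lambda$ is semistable admissible, these descriptions apply.

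Let $\psi$ be the idèle class character attached to $\tau^{n_M}$ by Artin reciprocity, normalised so that uniformisers map to arithmetic Frobenius. Then $\psi$ is trivial on the principal idèle $\gamma$ embedded diagonally. Write $\gamma$ as an idèle and decompose it by places.
\begin{enumerate}
\item At $\p$, $\gamma$ is a uniformiser raised to the $h$-th power times a unit. Because $\psi$ is unramified at $\p$, this contributes $\tau^{n_M}(\Frob_\p)^h$.
\item At finite places away from $\p$ and $\ell$, $\gamma$ is a unit and $\psi$ is unramified, so the contribution is trivial.
\item At the archimedean place, which is complex, the contribution is trivial, since a character into a finite group is trivial on the connected group $\CC^*$.
\item At the places above $\ell$, $\gamma$ is a unit (as $p \neq \ell$), and the contribution is given by the local characters from \cref{lem:C1local} raised to the power $n_M$.
\end{enumerate}

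In the split case, the $\ell$-adic contribution is $\gamma^{-a n_M}\bar\gamma^{-b n_M} \pmod{\lambda}$, using the identification of $\lambdabar$-adic residues via conjugation as in the lemma. In the inert case it is $\gamma^{-an_M}\bar\gamma^{-bn_M} \pmod \lambda$. Triviality of $\psi$ on global elements then gives
\[
\tau^{n_M h}(\Frob_\p)\,\gamma^{-an_M}\bar\gamma^{-bn_M} \equiv 1 \pmod{\lambda},
\]
so $\tau^{n_Mh}(\Frob_\p) \equiv (\gamma^a\bar\gamma^b)^{n_M}$. This is the claim, with $\alpha = \gamma^a\bar\gamma^b \in \{1,\gamma,\bar\gamma,\gamma\bar\gamma\}$.

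The main obstacle is the sign and normalisation bookkeeping. I would check that the Artin map conventions in \cref{lem:C1local} (arithmetic Frobenius, $\theta_f(\Art(x))=x^{-1}$) match the global reciprocity used here, so that the exponent comes out as $+1$ on $\alpha$ and not $-1$. I would also check that the mixed reductions modulo $\lambda$ and modulo $\lambdabar$ are consistently identified through conjugation. A sign error would only replace $\alpha$ by $\alpha^{-1}$, so I would verify it carefully against the lemma's stated normalisation.
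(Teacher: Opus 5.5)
Your proposal is correct and follows the paper's proof essentially step for step. Like the paper, you evaluate $\psi=\tau^{n_M}$ on the principal idele $\gamma$ and observe that the only nontrivial local contributions come from $\p$, giving $\tau^{n_Mh}(\Frob_\p)$, and from the places above $\ell$, giving $(\gamma^{-a}\bar\gamma^{-b})^{n_M}$ by \cref{lem:C1local}; the normalisation check you flag is already built into that lemma, since it uses the same arithmetic-Frobenius convention as $\Art_\p$, so the exponent on $\alpha$ comes out positive as claimed.
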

\begin{proof}
    Let $\psi = \tau^{n_M}$.
    Since the global Artin reciprocity map is trivial on principal ideles, compatibility of the global and local 
    Artin maps gives $\prod_v \psi(\Art_v(\gamma)) = \psi(\Art(\gamma)) = 1$, where the product ranges over all 
    places $v$ of $M$ and $\Art_v$ is the local Artin reciprocity map at $v$. 
    Using the facts that the Artin map at the complex infinite place is trivial, $\ord_v(\gamma) = 0$ for all 
    $v \ne \p$, and $\psi$ is unramified outside $\ell$ by \cref{lem:C1ram}, 
    this equality simplifies to:
    \begin{align}
        \label{eq:artinreciprocitylocalglobal}
        \psi(\Art_\p(\gamma)) \prod\limits_{v|\ell} \psi(\Art_v(\gamma)) = 1
    \end{align}
    The first term is $\psi(\Art_\p(\gamma)) = \psi(\Frob_\p^h) = \tau^{n_Mh}(\Frob_\p)$. By \cref{lem:C1local},
    we get that $\prod\limits_{v|\ell} \tau(\Art_v(\gamma)) \equiv \gamma^{-a} \bar \gamma^{-b} \pmod \lambda$ for some $a,b \in \{0,1\}$. Raising the last congruence to the $n_M$-th power, and putting them back into 
    \cref{eq:artinreciprocitylocalglobal}, gives $\tau^{n_Mh}(\Frob_\p) \equiv (\gamma^a \bar \gamma^b)^{n_M} \pmod \lambda$, 
    which is the desired result.
\end{proof}

\cref{prop:C1test} readily yields the following algorithm for detecting all possibly $\C_1$-primes for an abelian threefold $A$, given sufficiently many $L$-polynomials for $A$.

\begin{algorithm}[Possibly $\C_1$-primes]
    \label{Alg:C1}
    \label{Alg:red}
    \hfill\\
    {\em Input:}
    \begin{itemize}
        \item an integer $N$ that is a multiple of all primes of bad reduction for a 
        polarised abelian threefold $A/\Q$.
        \item an imaginary quadratic field $M$ such that $A$ has potential imaginary multiplication by $M$ of signature $(2,1)$.
        \item $L$-polynomials of $A/\Q$ at each prime in the set $S$ of all primes less than $B > 0$, that split in $M$ and do not divide $N$.
    \end{itemize}
    {\em Output:} a finite superset of the semistable $\C_1$-primes, or $0$.
    \begin{enumerate}[leftmargin=!]
        \item \label{step:AlgC1ComputeMultipleOfn_M} Compute a positive integer $e$ divisible by $n_M$ 
        using \cref{eq:semistablereductionover}.
        \item \label{step:AlgC1Polys} For each $p \in S$, choose a prime $\p$ of $\OM$ above $p$. 
        Let $h_p$ be the order of $\p$ in the class group of $M$, and $\gamma_p$ a generator 
        of $\p^{h_p}$. Calculate $f_p(x)=x^6L_{p,A}(1/x)$ as in \cref{eq:charpolofFrobaction} from the given $L$-polynomial,
        and use resultants to compute $f_p^{(eh_p)}(x)$.
        Evaluate
        at $x=1,p^{eh_p},\gamma_p^e$ 
        to compute
        \begin{align*}
            R_p = f_p^{(eh_p)}(1) \cdot f_p^{(eh_p)}(p^{eh_p}) \cdot \Nm_{M/\QQ}\left(f_p^{(eh_p)}(\gamma_p^{e})\right).
        \end{align*}
        \item \label{step:AlgC1Return} Calculate $R = \gcd_{p \in S}\left(p R_p\right)$.
        If $R=0$, return $0$. Otherwise return the set of prime numbers dividing $R$.
    \end{enumerate}
\end{algorithm}

\begin{theorem}
    \label{thm:AlgC1works}
    Let $A/\Q$ be a polarised abelian threefold, and
    $M$ be an imaginary quadratic field 
    such that $M \cong \End^0(A_{\Qbar})$.
    Suppose \cref{assumption:surjective} holds.
    Then, for $B$ large enough, \Cref{Alg:C1} returns a finite superset of the semistable $\C_1$-primes for $A$.
\end{theorem}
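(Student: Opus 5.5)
Two things must be established. First, every semistable $\C_1$-prime $\ell$ divides $R$, whenever $R \neq 0$. Second, for $B$ large enough we have $R \neq 0$, so the output is a finite set rather than $0$.

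\emph{Every semistable $\C_1$-prime divides $R$.} Let $\ell$ be a semistable $\C_1$-prime, $\lambda \mid \ell$, and $\tau$ a one-dimensional Jordan--H\"older factor of $A[\lambda]$. Take $p \in S$. If $p = \ell$, then $\ell$ divides $pR_p$ trivially, so assume $p \neq \ell$. Then $A$ has good reduction at $\p$ because $p \nmid N$, and \cref{prop:C1test} applies with $h = h_p$ and $\gamma = \gamma_p$. It gives $\alpha \in \{1,\gamma_p,\bar\gamma_p,\gamma_p\bar\gamma_p\}$ with $\tau^{n_Mh_p}(\Frob_\p) \equiv \alpha^{n_M} \pmod \lambda$. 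Since $n_M \mid e$, raising to the power $e/n_M$ gives $\tau(\Frob_\p)^{eh_p} \equiv \alpha^e$.

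Next, $\tau(\Frob_\p)$ is a root of $f_p$ modulo (a prime above) $\lambda$. Indeed, $\tau$ is a constituent of $A[\lambda]$, and $f_p$ is the characteristic polynomial of $\Frob_\p$ on $A[\ell] \supseteq A[\lambda]$. Hence $\alpha^e$ is a root of $f_p^{(eh_p)}$ modulo $\lambda$. The four cases give $\alpha^e \in \{1, p^{eh_p}, \gamma_p^e, \bar\gamma_p^e\}$, using $\gamma_p\bar\gamma_p = p^{h_p}$. These correspond to the factors $f_p^{(eh_p)}(1)$, $f_p^{(eh_p)}(p^{eh_p})$, and the norm $f_p^{(eh_p)}(\gamma_p^e)\cdot f_p^{(eh_p)}(\bar\gamma_p^e)$, using that $f_p$ has rational coefficients. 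So $\lambda$ divides one factor of $R_p$, hence $\ell \mid R_p$, and therefore $\ell \mid R$.

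\emph{$R \neq 0$ for $B$ large.} This is the main obstacle. By \cref{assumption:surjective}, choose a split prime $\ell_0 = \lambda_0\bar\lambda_0$ with $\rho_{\lambda_0}(\GM) = \GL_3(\ell_0)$, taking $\ell_0 \nmid N$. By \cref{prop:rholambdasurjectiveimpliesrhoellsurjective} we then have $\rho_{\ell_0}(\GM) = \FF_{\ell_0}^* \times \GL_3(\ell_0)$. In $\GL_3(\ell_0)$ there is an element $g$ whose characteristic polynomial is irreducible of degree $3$ with roots of order $\ell_0^3-1$; for large $\ell_0$ no $eh$-th power of such a root lies in $\FF_{\ell_0}$.

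By Chebotarev's density theorem, choose a split prime $p \nmid N\ell_0$ whose Frobenius at $\p$ acts on $A[\lambda_0]$ as $g$, and on $A[\bar\lambda_0]$ via the corresponding component. The factor $f_{p,1}$ modulo $\lambda_0$ then has no root $x$ with $x^{eh_p} \in \FF_{\ell_0}$. The factor $f_{p,2}$ is the characteristic polynomial on $A[\bar\lambda_0]$, which is determined up to the similitude twist by the inverse-transpose of $g$. Its roots are multiples of $\mu\beta^{-1}$ for roots $\beta$ of $g$, and these also avoid $\FF_{\ell_0}$ after taking $eh_p$-th powers. Since $1, p^{eh_p}, \gamma_p^e, \bar\gamma_p^e$ all reduce into $\FF_{\ell_0}$, no factor of $R_p$ vanishes modulo $\lambda_0$. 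Thus $R_p \not\equiv 0$ modulo $\ell_0$, and in particular $R_p \neq 0$.

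Hence once $B > p$ the gcd is nonzero, and the returned set is finite. The delicate points are controlling the orders of the roots, which is where $\ell_0$ large, or the congruence $\ell_0 \equiv 1 \pmod{e h}$ from the assumption, is used, and bounding $h_p$ by the class number $h_M$, which we may assume divides the chosen $e$.
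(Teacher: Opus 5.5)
Your proof is correct and has the same overall structure as the paper's. The divisibility of $R$ by every semistable $\C_1$-prime is exactly the paper's argument via \cref{prop:C1test}. For non-vanishing you also use \cref{assumption:surjective} and Chebotarev to pick a split $p$ with $\rho_{\lambda_0}(\mathrm{Frob}_{\mathfrak{p}})$ conjugate to a generator $\beta$ of $\FF_{\ell_0^3}^*$, whose eigenvalues on $A[\lambdabar_0]$ are $p\beta^{-\ell_0^i}$.

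Where you differ is in how you conclude $R_p\neq 0$.
\begin{itemize}
\item The paper takes only $\ell_0^2+\ell_0+1>h_M$, so that no $h_p$-th power of a root of $f_p$ reduces into $\FF_{\ell_0}$. It then uses the Weil bound $|\alpha|=\sqrt{p}$ to dispose of the evaluations at $1$ and $p^{eh_p}$. Finally it uses the congruence $\ell_0\equiv 1 \pmod e$ to force the root of unity $\zeta$ in $\alpha^{h_p}=\zeta\gamma_p$ into $\FF_{\ell_0}$.
\item You take $\ell_0^2+\ell_0+1>eh_M$, so that no $eh_p$-th power of any root of $f_p$ reduces into $\FF_{\ell_0}$. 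Meanwhile $1$, $p^{eh_p}$, $\gamma_p^e$ and $\bar\gamma_p^e$ all do, because $\lambda_0$ has residue degree one.
\end{itemize}
Your version gives the stronger conclusion $\ell_0\nmid R_p$, uniformly for all four evaluation points. It needs no appeal to the Weil conjectures, and it uses only the ``$\ell$ arbitrarily large'' part of \cref{assumption:surjective}, not the congruence condition. The cost is a larger lower bound on $\ell_0$, which is harmless.

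Some of your closing remarks are therefore superfluous. You need neither $\ell_0\equiv 1\pmod{eh}$ nor $h_M\mid e$. The latter could not be assumed anyway, since $e$ is fixed in the first step of \cref{Alg:C1}. The appeal to \cref{prop:rholambdasurjectiveimpliesrhoellsurjective} can also be dropped: surjectivity of $\rho_{\lambda_0}$ together with \cref{prop:centralisers_grp_thy_split_case} already determines the eigenvalues on $A[\lambdabar_0]$.
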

\begin{proof}
    We work with the notation in \cref{Alg:C1}.
    To show that the algorithm does not return $0$ for $B$ large enough, we need to 
    produce a single prime $p$ with $R_p \neq 0$.
    Let $h_M$ denote the class number of $M$.
    By \cref{assumption:surjective}, 
    choose a rational prime $\ell_0 \equiv 1 \pmod e$ splitting in $M$, and large enough
    so that $\ell_0^2+\ell_0+1 > h_M$, and $\rho_{\lambda_0} \colon \GM \rightarrow \GL_3(\FF_{\ell_0})$ 
    is surjective for $\lambda_0$ above $\ell_0$.
    Consider a generator $g$ of $\FF_{\ell_0^3}^*$, and let $g$ also denote its image under 
    a fixed natural embedding $\FF_{\ell_0^3}^* \hookrightarrow \GL_3(\FF_{\ell_0})$.
    By applying the Chebotarev density theorem to $M(A[\lambda_0])/M$, we know that there 
    exists a prime $\p$ of $M$, lying over a rational prime $p \neq \ell_0$, which splits in $M$ and does not divide $N$, such that $\rho_{\lambda_0}(\Frob_\p)$ is conjugate to $g$.
    
    Fix a place of $\Qbar$ above $\lambda_0$.
    We claim that no root of $f_p$ has a reduction whose $h_p$-th power is in $\FF_{\ell_0}$.
    Since $f_p \pmod{\lambda_0}$ is the product of the characteristic polynomials of $\rho_{\lambda_0}(\Frob_\p)$ and of $\rho_{\Bar\lambda_0}(\Frob_\p)$, its roots 
    are $\{ g^{\ell_0^i}, p g^{-\ell_0^i} | 0 \leq i \leq 2\}$. Since the smallest power of $g$ to lie in $\FF_{\ell_0}$ is the $(\ell_0^2+\ell_0+1)$-th power, the same is true for all of these elements. Since $h_p \leq h_M < \ell_0^2+\ell_0+1$, the claim follows.

    Assume on the contrary that $R_p = 0$. Since every root of $f_p$ has complex absolute value $\sqrt{p}$ because of the Weil conjectures, $R_p$ can be zero only if 
    $f_p^{(eh_p)}(\gamma_p^{e}) = 0$. That is, there is a root $\alpha$ of $f_p$ such that
    $\alpha^{eh_p} = \gamma_p^e$. Thus $\alpha^{h_p} = \zeta \gamma_p$ for some root of unity $\zeta$ 
    with $\zeta^e=1$. Consider reduction at the chosen place above $\lambda_0$. Since 
    $\lambda_0$ has residue degree $1$, and $e | \ell_0-1$, we know that the reductions of
    both $\gamma_p$ and $\zeta$ lie in $\FF_{\ell_0}$. This implies that the reduction of 
    $\alpha^{h_p}$ also lies in $\FF_{\ell_0}$, contradicting the claim just proven. Therefore 
    $R_p \neq 0$.
    So, for large enough $B$, the algorithm returns a finite set of primes.

    Let $\ell$ be a semistable $\C_1$-prime for $A$. So $A[\lambda]$ is not absolutely irreducible for 
    some prime $\lambda$ above $\ell$, and let $\tau$ be a $1$-dimensional Jordan--H\"older factor of $A[\lambda]$.
    Let $p \in S$. If $p = \ell$, then $\ell$ clearly divides $pR_p$. If $p \neq \ell$,
    then $\tau(\Frob_\p)$ is an eigenvalue of $\rho_\lambda(\Frob_\p)$, thus 
    $f_p(\tau(\Frob_\p)) \equiv 0 \pmod \lambda$. \cref{prop:C1test} says that  
    $\tau^{eh_p}(\Frob_\p) \equiv \alpha^{e} \pmod \lambda$ for some 
    $\alpha \in \{1,\gamma_p,\Bar\gamma_p,\gamma_p\Bar\gamma_p = \Nm(\p)^{h_p} = p^{h_p}\}$.
    Hence $f_p^{(eh_p)}(\alpha^{e}) \equiv 0 \pmod \lambda$, which shows that $\ell | R_p$.
    So $\ell$ divides $R$.
\end{proof}

\subsubsection{$\C_2$ imprimitive subgroups}
\label{subsec:C2test}

We start with the following lemma about characteristic polynomials of certain elements in $\C_2$-type maximal subgroups.  
\begin{lemma}
    \label{lem:imprimitivity_min_polys}
    Let $V$ be a three dimensional vector space over a finite field $\FF$ of characteristic $\ell \geq 5$.
    Suppose $\sigma \in \GL(V)$ preserves a decomposition $V = \langle v_1 \rangle \oplus \langle v_2 \rangle \oplus \langle v_3 \rangle$.
    Write $D(V)= \{\langle v_1 \rangle , \langle v_2 \rangle , \langle v_3 \rangle\}$.
    The following hold:
    \begin{enumerate}
        \item \label{item:order3elementcharpol} if the action of $\sigma$ on $D(V)$ has order 3, then its characteristic polynomial equals $x^3+a$ for some $a \in \FF^*$;
        \item \label{item:order2elementcharpol} if the action of $\sigma$ on $D(V)$ has order 2, then its characteristic polynomial equals $(x+a)(x^2+b)$ for some $a,b \in \FF^*$.
    \end{enumerate}
    Moreover if $\sigma$ acts non-trivially on $D(V)$ and its minimal polynomial  equals 
    \begin{enumerate}[(a)]
        \item either $(x-1)(x-d)$ for some $  d \in \FF^*$;
        \item or $(x-1)(x-d)(x-\gamma)$ for some $d \in \FF_\ell^*$ and $\gamma  \in \FF \setminus  \FF_\ell$;
    \end{enumerate}
     then $\sigma$ acts as a 2-cycle on $D(V)$ and $d=-1$.
\end{lemma}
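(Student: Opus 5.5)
We will work directly with the monomial matrix of $\sigma$ in the basis $v_1,v_2,v_3$. Since $\sigma$ permutes the lines $\langle v_i\rangle$, we may write $\sigma v_i = c_i v_{\pi(i)}$ for a permutation $\pi \in S_3$ (the image of $\sigma$ in the action on $D(V)$) and scalars $c_i \in \FF^*$.

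For (1), $\pi$ is a $3$-cycle. Then $\sigma^3$ acts on each $v_i$ as the scalar $c_1c_2c_3$. We compute the characteristic polynomial as the determinant of $xI$ minus a monomial matrix supported on a $3$-cycle. Expanding along the permutation pattern, only the identity and the $3$-cycle terms survive, and this gives $x^3 - c_1c_2c_3$. Hence $a = -c_1c_2c_3 \neq 0$.

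For (2), $\pi$ is a transposition, say swapping $1,2$ and fixing $3$. The matrix is then block diagonal, with a $2\times 2$ antidiagonal block with entries $c_1,c_2$ and a $1\times 1$ block $c_3$. Its characteristic polynomial is $(x-c_3)(x^2-c_1c_2)$, which has the required form.

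For the ``moreover'' part, assume $\pi \neq 1$. First we rule out $\pi$ of order $3$. In that case the characteristic polynomial is $x^3+a$, whose roots are $\beta,\beta\omega,\beta\omega^2$ with $\omega$ a primitive cube root of unity in $\overline{\FF}$. These are distinct because $\ell \neq 3$.
\begin{itemize}
\item In case (a) the minimal polynomial has degree $2$, but $\sigma$ has three distinct eigenvalues, so its minimal polynomial equals its characteristic polynomial of degree $3$. This is a contradiction.
\item In case (b), $1$ is a root, so $\beta^3=1$ and the roots are exactly $1,\omega,\omega^2$. Then $\{d,\gamma\}=\{\omega,\omega^2\}$. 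But $\omega$ and $\omega^2$ are conjugate over $\FF_\ell$, so they lie in $\FF_\ell$ together or not at all. This contradicts $d\in\FF_\ell$ and $\gamma\notin\FF_\ell$.
\end{itemize}
Hence $\pi$ is a transposition, and the characteristic polynomial is $(x+a)(x^2+b)$. The roots of $x^2+b$ are $\pm\delta$, which are distinct since $\ell\neq 2$.
\begin{itemize}
\item In case (b) the eigenvalues are $1,d,\gamma$. Since $\pm\delta$ are negatives of each other, and $-\gamma \notin \{1,d,\gamma\}$ (because $\gamma\notin\FF_\ell$ and $\gamma \neq 0$), the pair $\pm\delta$ must be $\{1,d\}$. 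So $d=-1$.
\item In case (a) the eigenvalues lie in $\{1,d\}$, and $\pm\delta$ are two distinct eigenvalues. So $\{\delta,-\delta\}=\{1,d\}$, and again $d=-1$.
\end{itemize}

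The only delicate step is excluding the $3$-cycle in case (b). This needs $\ell\neq 3$, so that the cube roots of unity are distinct, together with the Galois-conjugacy argument over $\FF_\ell$. The assumption $\ell\ge5$ supplies exactly this, as well as $\ell\neq2$ for the transposition case.
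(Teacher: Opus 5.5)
Your proof is correct and follows essentially the same route as the paper: write $\sigma$ as a monomial matrix, read off the characteristic polynomials, rule out the $3$-cycle, and in the transposition case match the pair of eigenvalues $\pm\delta$ with $\{1,d\}$. The differences are local. To exclude the $3$-cycle you use that the roots of $x^3+a$ are distinct, so the minimal polynomial has degree $3$, and that $\omega,\omega^2$ lie in $\FF_\ell$ together. The paper instead invokes Maschke's theorem (to get $d\neq 1$) and compares $\det\sigma=1$ with $d$, $d^2$ or $d\gamma$.
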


\begin{proof}
    The group of elements which preserve $D(V)$ is isomorphic to $\GL_1(\FF)^3 \rtimes S_3$. As $\ell \geq 5$, the elements in $\GL_1(\FF)^3 \rtimes S_3$ act semisimply by Maschke's Theorem, so their minimal polynomials are squarefree.
    In particular, $d \neq 1$.
    
    Let us suppose $\sigma \in \GL(V)$ acts on $D(V)$ as a 3-cycle.
    Then there exists $a_1, a_2,a_3 \in \FF^*$ such that $\sigma v_{i} = a_{i}v_{i+1} $  where the indices are taken modulo 3.
    In particular, $\sigma$ has trace zero and the minimal polynomial of $\sigma^3$ is  $t-a_1a_2a_3$, that is, $\sigma^3$ is a scalar matrix.
    Moreover, as $(1,2,3)$ is an even permutation, $\det(\sigma)=a_1a_2a_3$.
    
    Let us now show that the minimal polynomial of $\sigma$ cannot be as in $(a)$ or $(b)$.
    We shall suppose for a contradiction that it is.
    Then $\sigma$ has an eigenvalue equal to 1.
    As $\sigma^3$ is a scalar matrix, we find that $a_1a_2a_3 = 1$.
    Thus $\det(\sigma) = 1$ and $\sigma$ has order 3.
    However, this is incompatible with the minimal polynomials in $(a)$ and $(b)$.
    Indeed, if the minimal polynomial of $\sigma$ was as in $(a)$, the determinant of $\sigma$ would either equal $d$ or $d^2$, both of which are not equal to 1 since $d$ is forced to have order $3$ in $\FF^*$.
    The minimal polynomial cannot be as in $(b)$ either, since in this case the determinant is $d \gamma$, which does not belong to $\FF_\ell$.

    Let us now suppose $\sigma \in \GL(V)$ acts on $D(V)$ as a 2-cycle.
    Without loss of generality, we may suppose $\sigma$ fixes $\langle v_3\rangle$.
    Then there exists $a_1, a_2,a_3 \in \FF^*$  such that $\sigma v_{1} = a_{1}v_{2},\sigma v_{2} = a_{2}v_{1},\sigma v_{3} = a_{3}v_{3}$.
    We immediately verify that the characteristic polynomial of $\sigma$ is $x^3-a_{3}x^2-a_1a_2x +a_1a_2a_3$, which factors as $(x-a_{3})(x^2-a_1a_2)$.

    Let us now show that if the minimal polynomial of $\sigma$ is as given in $(a)$ or $(b)$, then $d=-1$.
    As the trace of $\sigma$ equals $a_3$ and $a_3$ is an eigenvalue of $\sigma$, we see that the eigenvalues of $\sigma$ are of the form $c,-c, a_3$ for some $c$.
    Since $-\gamma$ is not an eigenvalue of $\sigma$, we deduce $\{1,d\}=\{\pm c\}$.
\end{proof}

We now describe consequences of $\rho_\lambda(G_M)$ being contained in a $\C_2$-type maximal subgroup, using \cref{lem:imprimitivity_min_polys}, and the characterisation in \cref{prop:inertia_at_l_description} of the action of inertia at $\ell$ on $A[\lambda]$.

\begin{proposition}
    \label{prop:C2test}
    Let $\lambda$ be a prime of $\OO_M$ lying above an admissible prime $\ell$ for $A$.
    Suppose that $\rho_\lambda(G_M)$ acts absolutely irreducibly on $A[\lambda]$ and is contained in a $\C_2$-type maximal subgroup of
    $\CL^\varepsilon_3(A[\lambda])$.\\
    Then there exists a non-trivial abelian extension $L/M$ of degree $\leq 3$ unramified outside $\ell$ and the primes of bad reduction for $A$, such that
    \begin{itemize}
        \item if $[L \colon M]=2$ and the image of $\sigma \in \GM$ in $\Gal(L/M)$ is non-trivial then the minimal polynomial of $\rho_\lambda(\sigma)$ equals $x^3+ax^2+bx+ab$ for some $a,b \in \FF_\lambda^*$;
        \item if $[L \colon M]=3$ and the image of $\sigma \in \GM$ in $\Gal(L/M)$ is non-trivial then $\Tr \rho_\lambda(\sigma) \equiv 0 \pmod{\lambda}$.
    \end{itemize}
    If moreover, $A$ has semistable reduction at every prime lying above $\ell$ and $\ell \geq 5$, then $L/M$ is unramified at $\ell$.
\end{proposition}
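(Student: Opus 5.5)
The plan is to obtain $L$ from the permutation action on the lines of the imprimitive decomposition. Suppose $\rho_\lambda(G_M)\subseteq N$, where $N\cong \GLe_1(\ell)^3\rtimes S_3$ (times scalars in the $\CU$ case) is the stabiliser of a decomposition $A[\lambda]=\langle v_1\rangle\oplus\langle v_2\rangle\oplus\langle v_3\rangle$. Composing $\rho_\lambda$ with $N\to S_3$ gives a homomorphism $\pi\colon G_M\to S_3$ whose image $P$ is transitive on $\{1,2,3\}$. Indeed, if $P$ fixed some $\langle v_i\rangle$, that line would be a $G_M$-stable subspace, contradicting absolute irreducibility. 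So $P\in\{A_3,S_3\}$. If $P=A_3$, let $L$ be the cubic cyclic extension cut out by $\pi$. If $P=S_3$, let $L$ be the quadratic extension cut out by the sign character $\operatorname{sgn}\circ\pi$. In both cases $L/M$ is non-trivial and abelian of degree at most $3$. Since $\pi$ factors through $\Gal(M(A[\lambda])/M)$, $L$ is unramified outside $\ell$ and the bad primes, by the Néron--Ogg--Shafarevich criterion.

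The Frobenius conditions follow from \cref{lem:imprimitivity_min_polys}.
\begin{itemize}
\item In the cubic case, $\sigma$ acts non-trivially on $\Gal(L/M)$ exactly when $\pi(\sigma)$ is a $3$-cycle. Part \eqref{item:order3elementcharpol} of the lemma then gives characteristic polynomial $x^3+a$, so the trace is $0$.
\item In the quadratic case, non-triviality means $\pi(\sigma)$ is a transposition. Part \eqref{item:order2elementcharpol} gives characteristic polynomial $(x+a)(x^2+b)=x^3+ax^2+bx+ab$.
\end{itemize}
The lemma is stated for $\ell\ge5$ and over $\GL(V)$, but the characteristic-polynomial computation in its proof is direct and works in any characteristic, and our elements do lie in $\GL(V)$. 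The statement speaks of the minimal polynomial. For this I intend to use semisimplicity (Maschke, for $\ell\ge5$) to reduce to the squarefree case. Otherwise I would read the bullet as the characteristic polynomial, which is all the later test needs. This small discrepancy is worth flagging but is not the main issue.

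The main obstacle is the final assertion: when $A$ is semistable above $\ell$ and $\ell\ge5$, $L/M$ is unramified at $\ell$. The plan is to use \cref{prop:inertia_at_l_description}. Let $I$ be an inertia group at a prime above $\ell$. Its wild part is pro-$\ell$, and $\ell\ge5$ does not divide $|S_3|=6$, so $\pi(I)$ is a cyclic quotient of tame inertia. Suppose $\pi(I)\neq1$. Pick $\sigma\in I$ whose image generates $\pi(I)$, with $\sigma$ also chosen so that $\rho_\lambda(\sigma)$ is semisimple (prime-to-$\ell$ part). The characteristic polynomial of $\rho_\lambda(\sigma)$ is then $x^3+a$ or $(x+a)(x^2+b)$, so its eigenvalues are of the form $\{c,\zeta_3c,\zeta_3^2c\}$ or $\{-a,\pm\sqrt{-b}\}$. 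These must match the values $\psi_i(\sigma)$ for one of the explicit lists of tame characters in \cref{prop:inertia_at_l_description}. For each case ($\ell$ split or inert, and each determinant type), I would show that requiring the three characters to have pairwise ratios of order $3$ (resp. two values negatives of each other) along the cyclic image forces a contradiction with the exponents of fundamental characters, given $\ell\ge5$. For example, for $2\mathbbm1+\theta_1$ a repeated eigenvalue $1$ kills both patterns unless $\pi(\sigma)=1$. For $\theta_3+\theta_3^\ell+\theta_3^{\ell^2}$, one checks that $\theta_3^{\ell-1}$ restricted to the relevant cyclic subgroup cannot have order dividing $3$ while $\sigma$ moves all three lines.

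The cleanest route is probably to replace $\sigma$ by a generator $t$ of tame inertia. Then $\pi(t)$ generates $\pi(I)$, and conjugation by $t$ permutes the $\psi_i$-eigenlines compatibly with the permutation of the $\langle v_i\rangle$. A nontrivial permutation forces $I$ to permute the isotypic components, and those components are $I$-stable. This gives a contradiction unless the permuted characters coincide. That happens only for the repeated characters $2\mathbbm1$, $2\theta_1$, $2\theta_2$, $2\theta_2^\ell$, and these cases are then handled by the eigenvalue patterns above. This case check is where I expect most of the work to lie.
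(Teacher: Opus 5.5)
\textbf{Verdict.} The first two parts of your proposal match the paper, but the unramifiedness at $\ell$ has a genuine gap.

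\textbf{What agrees with the paper.} Your construction of $L$ (transitivity of $\pi(G_M)$ from irreducibility, then $L$ cut out by $\pi$ or by $\mathrm{sgn}\circ\pi$) is exactly the paper's argument. So is your derivation of the two Frobenius conditions from parts (1) and (2) of \cref{lem:imprimitivity_min_polys}. On your side remark: Maschke cannot rescue the literal ``minimal polynomial'' reading. A semisimple element swapping two of the lines, with eigenvalues $\{1,1,-1\}$, has minimal polynomial $x^2-1$. Only the characteristic-polynomial statement is true, and that is what the paper's proof establishes and what \cref{Alg:C2} uses.

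\textbf{The gap.} You leave the final assertion as a case check, and both mechanisms you propose for it fail.
\begin{enumerate}
\item A single $\sigma\in I$ whose image merely generates $\pi(I)$ need not give a contradiction. Suppose $(\rho_\lambda|_I)^{ss}=2\mathbbm{1}+\theta_1$ and $\theta_1(\sigma)=-1$. Then the characteristic polynomial is $(x-1)(x^2-1)$, which is exactly the transposition shape $(x+a)(x^2+b)$ with $a=b=-1$. So your claim that a repeated eigenvalue $1$ kills both patterns is false. Such a $\sigma$ can also generate $\pi(I)$: take $\sigma=t^{(\ell-1)/2}$ with $(\ell-1)/2$ odd and $\pi(t)$ a transposition.
\item The isotypic-component argument is invalid. $\rho_\lambda(I)$ is cyclic and of course stabilises its own eigenlines, but nothing forces these to be among the lines $\langle v_i\rangle$. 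A monomial matrix inducing a $3$-cycle has three distinct eigenlines, none of which is a $\langle v_i\rangle$. So there is no tension between $t$ permuting the $\langle v_i\rangle$ and $I$ fixing its isotypic components.
\end{enumerate}

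\textbf{The missing idea.} You must use the size of the whole inertia image: $\theta_1|_I$ (resp.\ $\theta_2|_I$) surjects onto a group of order $\ell-1\geq 4$ (resp.\ $\ell^2-1\geq 24$). In your framework this goes as follows.
\begin{itemize}
\item For $\ell\geq 5$ the $\C_2$-subgroup has order prime to $\ell$. Hence $\rho_\lambda(I)=\langle \rho_\lambda(t)\rangle$ for a lift $t$ of a topological generator of tame inertia, and $\theta_n(t)$ has exact order $\ell^n-1$.
\item Take $\sigma=t$ itself, not an arbitrary generator of $\pi(I)$. Then the check is short:
\begin{itemize}
\item $\{1,1,g\}$ or $\{1,g,g\}$ with $g$ of order $\ell-1\geq 4$ fits neither $\{c,\zeta_3c,\zeta_3^2c\}$ nor $\{-a,\pm c\}$.
\item $2\theta_2+\theta_2^\ell$ would need $\theta_2(t)^{\ell-1}=-1$, which is impossible since this element has order $\ell+1$.
\item The remaining inertia types give $\rho_\lambda(t)$ of order exceeding the maximal element order $3(\ell-1)$, resp.\ $3(\ell^2-1)$, of the $\C_2$-subgroup. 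The one exception is $\mathbbm{1}+\theta_1+\theta_2$; there the patterns would need $\theta_1(t)$, $\theta_2(t)$ or $\theta_2(t)^{-\ell}$ to be $-1$ or a cube root of unity, which is impossible by their orders.
\end{itemize}
\end{itemize}

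\textbf{How the paper argues.} It applies the ``moreover'' part of \cref{lem:imprimitivity_min_polys} to every $\sigma\in I$ acting non-trivially on $D(\lambda)$. This gives $\theta_1(\sigma)=-1$, or in the inert case $\theta_2(\sigma)^{\ell-1}=-1$ after dividing by the scalar $\theta_2(\sigma)$. This holds on the whole complement of $\ker(\pi|_I)$, so it forces $\theta_1(I)\subseteq\{\pm1\}$ (resp.\ $\theta_2^{\ell-1}(I)\subseteq\{\pm1\}$), contradicting surjectivity.
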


\begin{proof}
    Being contained in a $\C_2$-type subgroup means $\rho_\lambda(G_M)$ stabilises a decomposition $A[\lambda] = \bigoplus^{3}_{i=1} \langle v_i \rangle$.
    Moreover, as $\rho_\lambda(G_M)$ acts irreducibly on $A[\lambda]$, its action on $D(\lambda) \coloneqq \{\langle v_1 \rangle , \langle v_2 \rangle , \langle v_3 \rangle\}$ contains a 3-cycle.
    The extension $L'$ of $M$ cut out by the kernel of the action of $\rho_\lambda(G_M)$ on $D(\lambda)$ has Galois group either $C_3$ or $S_3$.
    If $\Gal(L'/M) \cong C_3$ we write $L=L'$.
    If $\Gal(L'/M) \cong S_3$,  we let $L$ denote the unique quadratic extension of $M$ contained in $L'$.
    Applying the first part of Lemma \ref{lem:imprimitivity_min_polys}, we see that it only remains to prove the final statement.

    As $\rho_\lambda(G_M)$ is contained in a $\C_2$-type subgroup, its order is coprime to $\ell$ and thus acts semisimply by Maschke's Theorem.
    In particular, the restriction of $\rho_\lambda$ to an inertia group $I$ of a prime above $\ell$ is fully determined by Proposition \ref{prop:inertia_at_l_description}.
    Let us suppose for a contradiction that some $\sigma \in I$ acts non-trivially on $D(\lambda)$.
    We shall first consider the case that $\ell$ splits in $M$ and then afterwards that it is inert in $M$.

    If $\ell$ splits in $M$ then  $\rho_\lambda|_{I} = 2 \mathbbm{1} + \theta_1$ or $ \mathbbm{1} + 2\theta_1$.
    Indeed, the other possibilities described in \cref{prop:inertia_at_l_description} lead to $\rho_\lambda(I)$ containing a cyclic subgroup of order $\geq \ell^2-1$, which is incompatible with the fact that the largest cyclic subgroup of the $\C_2$-type maximal subgroup of $\GL_3(\ell)$ has order $3(\ell-1)$.
    Thus lemma \ref{lem:imprimitivity_min_polys} implies $\theta_1(\sigma)=-1$.
    But $\theta_1$ surjects onto $\FF_\ell^*$ which has order $\ell -1 \geq 4$.
    Thus $L/M$ is unramified at $\ell$ if $\ell$ splits in $M$.

    If $\ell$ is inert in $M$ then 
    $\rho_\lambda|_I = 2 \theta_2 + \theta_2^\ell$ or $\mathbbm{1} + \theta_1 + \theta_2$ for some fundamental character $\theta_2$ of level $2$.
    Indeed, the other possibilities described in \cref{prop:inertia_at_l_description} lead to $\rho_\lambda(I)$ containing a cyclic subgroup of order $\geq (\ell-1)(\ell^3+1)$, which is incompatible with the fact that the largest cyclic subgroup of the $\C_2$-type maximal subgroup of $\CU_3(\ell)$ has order $3(\ell^2-1)$.
    If $\rho_\lambda|_I = \mathbbm{1} + \theta_1 + \theta_2$, \cref{lem:imprimitivity_min_polys} implies that the image of $\theta_1$ is contained in $\{\pm 1\}$, contradicting $\ell\geq 5$.
    Thus suppose $\rho_\lambda|_I = 2 \theta_2 + \theta_2^\ell$.
    We may write $\rho_\lambda(\sigma) = \theta_2(\sigma) \sigma'$ for some $\sigma' \in \GL(A[\lambda])$.
    The action of $\sigma'$ on $D(\lambda)$ coincides with that of $\sigma$.
    As the minimal polynomial of $\sigma'$ is $(x-1)(x-\theta_2^{\ell-1}(\sigma))$, applying Lemma \ref{lem:imprimitivity_min_polys} we deduce $\theta_2^{\ell-1}(\sigma)=-1$.
    However, this contradicts the fact that $\theta_2$ surjects onto $\FF_{\ell^2}^*$ which has order $\ell^2-1 \geq 24$.
\end{proof}

\begin{remark}
\label{rem:ConductorExponentBound}
    Let us now bound the possible conductors of such quadratic or cyclic cubic extensions $L/M$. Since $L \subseteq M(A[\lambda])$, we know that this conductor divides the conductor of $A$. But we can say more. 
    Let $\p$ be a prime ramifying in the extension $L/M$ with residual characteristic $p$.
    If $p$ does not divide $[L:M]$, then $\p$ is tamely ramified and hence the exponent of $\p$ in the conductor of $L/M$ is $1$.
    If $p$ divides $[L:M]$, then $p = [L:M] \in \{2,3\}$ and the exponent of $\p$ in the conductor of $L/M$ is at most $1+\frac{p}{p-1}\ord_{\p}(p) \leq 5$, by \cite[Corollaire pg 215]{Serre_modularity_conjecture}.
\end{remark}

We now adapt \cref{prop:C2test} to turn it into an algorithm for detecting all possibly $\C_2$-primes for an abelian threefold $A$. The first step is to enumerate, using class field theory and \cref{rem:ConductorExponentBound}, the finite set of quadratic or cyclic cubic extensions $L'/M$, that are unramified outside the set of primes of semistable reduction for $A$. The field extension $L/M$ appearing in \cref{prop:C2test} must be one of these $L'$. So, for each of these extensions $L'$, and for sufficiently many primes $p$ inert in $L'$, if the $L$-polynomial of $A$ at $p$ is known, the congruences of \cref{prop:C2test} will lead to non-trivial divisibility conditions on the $\C_2$-primes. In \cref{Alg:C2,Alg:C3}, we try to optimize 
the number of primes to be considered for ensuring the existence of one that is inert in any possible extension $L'/M$.
We do this by working with a basis of homomorphisms from $G_M$ to $\FF_2$ or $\FF_3$, and making use of linear algebra.

\begin{algorithm}[Possibly $\C_2$-primes]
    \label{Alg:C2}
    \hfill\\
    {\em Input:} 
    \begin{itemize}
        \item an integer $N$ that is a multiple of all primes of bad reduction for a polarised abelian threefold $A/\Q$.
        \item an imaginary quadratic field $M$ such that $A$ has potential imaginary multiplication by $M$ of signature $(2,1)$.
        \item $L$-polynomials of $A/\Q$ at each prime $p$, not dividing $N$, less than some bound $B$.
    \end{itemize}
    {\em Output:} a finite superset of the $\C_2$-primes, or $0$.
    \begin{enumerate}[leftmargin=!]
        \item \label{step:AlgC2HomsToF2} Use the conductor exponent bounds from \cref{rem:ConductorExponentBound} to compute a basis $\{\chi_j : 1 \leq j \leq d\}$ for the $\FF_2$-vector space $H$ of Hecke characters of $M$ unramified outside $N$, and having order dividing $2$. We treat these characters as homomorphisms $G_M \rightarrow \FF_2$.
        \item \label{step:AlgC2ComputeLPolys} For each prime $p < B, p \nmid N$, use \cref{eq:charpolofFrobaction} to construct the polynomial $f_p(x) \in \Z[x]$ from the $L$-polynomial.
        Write $f_p(x) = f_{p,1}f_{p,2}$ for certain cubic polynomials $f_{p,i}(x) = x^3 + a_{p,i} x^2 + b_{p,i} x + c_{p,i} \in \OM[x]$, with $\overline{f_{p,1}} = f_{p,2}$, as follows:
        \begin{itemize}
            \item If $p$ splits in $M$, factorise $f_p(x)$ over $\OM$. If it has exactly two irreducible factors of degree $3$, call them $f_{p,1}(x)$ and $f_{p,2}(x)$. If not, move to the next prime.
            \item If $p$ is inert in $M$, let $f_{p,1}$ be the cubic polynomial in $\Z[x]$ such that $f_p(x) = f_{p,1}(x)^2$ and let $f_{p,2} = f_{p,1}$.
        \end{itemize}
        \item \label{step:AlgC2ConstructMatrix} 
        Let $S$ denote the set of primes $p$ such that $v_p \coloneqq a_{p,1} b_{p,1} - c_{p,1} \ne 0$.
        Let $S = \{p_i : 1 \leq i \leq m\}$. For each prime $p_i \in S$, choose a prime ideal $\frakp_i$ above $p_i$. 
        Construct the $m \times d$ matrix $A \in M_{m,d}(\FF_2)$ 
        whose $(i,j)^{th}$ entry is $\chi_j(\frakp_i) = \chi_j(\Frob_{\frakp_i}) \in \FF_2$.
        \item \label{step:AlgC2NotFullRankRepeat} If $\rank(A) < d$,
        return $0$.
        \item \label{step:AlgC2Return} Return the union of the set of all rational primes outputted by Algorithm \ref{Alg:C3}, and the set of rational primes dividing
        \begin{align*}
            6N\prod\limits_{p \in S} \Nm_{M/\Q}(v_p).
        \end{align*}
    \end{enumerate}
\end{algorithm}

\begin{theorem}
    \label{thm:AlgC2works}
    Let $A/\Q$ be a polarised abelian threefold, and
    $M$ be an imaginary quadratic field 
    such that $M \cong \End^0(A_{\Qbar})$.
    Suppose \cref{assumption:surjective} holds.
    Then, for $B$ large enough, \Cref{Alg:C2} returns a finite superset of the $\C_2$-primes for $A$.
\end{theorem}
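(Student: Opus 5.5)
Two things have to be shown, in this order: that for $B$ large enough the matrix built in Step~\ref{step:AlgC2ConstructMatrix} has rank $d$ (so the algorithm does not return $0$), and that every $\C_2$-prime divides the integer in Step~\ref{step:AlgC2Return} or lies in the output of \cref{Alg:C3}.

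\textbf{Correctness of the output.} Let $\ell$ be a $\C_2$-prime and $\lambda\mid\ell$ the corresponding prime. I will assume $\ell\nmid 6N$, since otherwise $\ell$ is in the output trivially. Then $\ell\ge 5$, $\ell$ is admissible, and $A$ has good, hence semistable, reduction above $\ell$. \cref{prop:C2test} then gives a nontrivial extension $L/M$ of degree $2$ or $3$, unramified outside $N$ and unramified at $\ell$.
\begin{itemize}
\item If $[L:M]=3$, the prime $\ell$ is handled by the cubic branch, i.e.\ by \cref{Alg:C3}, which is included in the output.
\item If $[L:M]=2$, then $L$ corresponds to a nonzero character $\chi\in H$ with $\chi=\sum_j c_j\chi_j$. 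Since the matrix has rank $d$, the vector $(\chi(\frakp_i))_i = A c$ is nonzero, so some $p_i\in S$ satisfies $\chi(\frakp_i)=1$.
\end{itemize}
In the quadratic case, if $p_i=\ell$ then $\ell$ divides $\Nm(v_{p_i})\cdot p_i$; I would include this factor, since $\ell$ divides $N$ only if bad. Otherwise $\rho_\lambda(\Frob_{\frakp_i})$ (or of $\Frob^2$ when $p$ is inert, where I must check that the element of $G_M$ used is $\Frob_{\frakp_i}$ itself, $\frakp_i=(p)$) has characteristic polynomial $f_{p_i,1}$ or $f_{p_i,2}$ mod $\lambda$. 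By \cref{lem:imprimitivity_min_polys}(2) this polynomial has the shape $(x+a)(x^2+b)=x^3+ax^2+bx+ab$, which forces $a_{p,1}b_{p,1}-c_{p,1}\equiv0$, or its conjugate, modulo $\lambda$. Hence $\ell\mid\Nm(v_p)$.

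\textbf{Rank $d$ for large $B$.} Here I use $\End^0\cong M$ and \cref{assumption:surjective}. Suppose the rank stays below $d$ for all $B$. Then some nonzero $\chi\in H$ vanishes on $\Frob_{\frakp}$ for every prime $p\in S$; the failure to lie in $S$ is what stops us from concluding. So I need that for each nonzero $\chi$ there are primes with $\chi(\Frob_\frakp)=1$ and $v_p\neq0$, the required $p$ splitting or being inert appropriately. To get them, take $\ell_0$ from the assumption with $\rho_{\lambda_0}(G_M)=\GL_3(\ell_0)$ and $\ell_0\nmid N$. The field $L_\chi$ is disjoint from $M(A[\lambda_0])$ modulo the determinant/abelian parts, and I would handle this by working in the compositum. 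By Chebotarev, choose $\frakp$ of degree one, nontrivial on $L_\chi$, with $\rho_{\lambda_0}(\Frob_\frakp)$ a matrix whose characteristic polynomial is not of the form $x^3+ax^2+bx+ab$, for instance an irreducible cubic. Then $v_p\not\equiv0\pmod{\lambda_0}$, so $v_p\neq0$, and since $f_p$ mod $\lambda_0$ has irreducible cubic factors, $f_p$ factors over $\OM$ into two cubics as required.

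\textbf{Main obstacle.} The hard step is the compatibility: realising $\chi(\Frob)=1$ simultaneously with a prescribed conjugacy class in $\GL_3(\ell_0)$. $L_\chi$ might lie inside $M(A[\lambda_0])$, and its quadratic subfields there correspond to the sign of $\det$ on $\GL_3(\ell_0)$. I would pick the class in the appropriate coset: elements of $\FF_{\ell_0^3}^*$ with determinant a nonsquare or a square as needed. Irreducible cubic characteristic polynomials exist in both cosets, so the choice is always possible.
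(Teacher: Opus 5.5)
Your proposal is correct. The half showing that every $\C_2$-prime lands in the output matches the paper's argument. \cref{prop:C2test} produces $L/M$ unramified outside $N$. In the quadratic case, full rank gives a row with $\chi(\frakp_i)=1$, and the shape $(x+a)(x^2+b)=x^3+ax^2+bx+ab$ forces $v_p\equiv 0$ modulo $\lambda$ or $\lambdabar$. The cubic case is delegated to \cref{Alg:C3}.

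You are more careful than the paper about a row with $p_i=\ell$, where $f_\ell \bmod \lambda$ is not the characteristic polynomial of any $\rho_\lambda(\sigma)$. The paper applies \cref{prop:C2test} there without comment. You need not change the output by an extra factor $p_i$, though. Keep \cref{Alg:C2} as stated and take $B$ large enough that each nonzero $\chi\in H$ is nonzero on at least two rows; your Chebotarev argument supplies infinitely many detecting primes. One of the two rows then differs from $\ell$, uniformly in $\ell$.

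The rank half takes a different route from the paper.

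\textbf{The paper's route.} It shows that the compositum $M_2$ of all fields cut out by $H$ is linearly disjoint from $M(A[\lambda])$. The key point is that the unique quadratic subextension of $M(A[\lambda])/M$, cut out by $\det$ modulo squares, is ramified at $\lambda$ or $\lambdabar$. This uses $\det\rho_\lambda|_I\in\{\theta_1,\theta_1^2\}$, the identity $\det\rho_\lambda\det\rho_{\lambdabar}=\chi_\ell^3$, and ${}^\sigma\rho_\lambda=\rho_{\lambdabar}$ (which needs $A/\Q$ and $\End^0(A_{\Qbar})\cong M$), together with an implicit $\ell\nmid N$. A single Chebotarev application in $(\Z/2\Z)^d\times\GL_3(\ell)$ then yields primes with $\chi_j(\frakp_i)=\delta_{ij}$, i.e.\ an identity submatrix.

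\textbf{Your route.} You treat the finitely many nonzero $\chi\in H$ one at a time. This suffices, since rank $<d$ for every $B$ forces one fixed nonzero $\chi$ to vanish on all rows. You never need disjointness. If $L_\chi\subseteq M(A[\lambda_0])$, it is the fixed field of $\det^{-1}(\FF_{\ell_0}^{*2})$. Moreover $\FF_{\ell_0^3}^*\setminus\FF_{\ell_0}^*$ contains elements of both nonsquare and square norm, e.g.\ $g$ and $g^2$ for a generator $g$, all with irreducible characteristic polynomial.

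\textbf{Comparison.} Your argument is more elementary: it uses no information about inertia at $\ell_0$ and no Galois-conjugation structure. The paper's argument instead gives the stronger fact that $M_2$ and $M(A[\lambda])$ are disjoint, and a dual-basis set of primes in one step.
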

\begin{proof}
    By \cref{assumption:surjective}, choose a rational prime $\ell$ splitting in $M$, and $\lambda$ above $\ell$ so that $\rho_\lambda(G_M) = \GL_3(\ell)$.
    Let $M(A[\lambda])$ denote the fixed field of $\ker(\rho_\lambda)$.
    It contains a unique quadratic extension of $M$, which corresponds to the quotient $\GL_3(\ell) \xrightarrow{\det} \FF_\ell^* \twoheadrightarrow \FF_\ell^*/\FF_\ell^{* 2}$.
    
    Let $I$ be the inertia group at $\lambda$. By \cref{prop:inertia_at_l_description}, we know that $\det \rho_\lambda|_I = \theta_1$ or $\theta_1^2$. If it is the latter, we deduce from $\det \rho_\lambda \det \rho_{\lambdabar} = \det \rho_\ell = \chi_\ell^3$ that $\det \rho_{\lambdabar}|_I = \theta_1$. 
    By the last part of \cref{thm:modellimagesarecontainedin}, we know that $\rho_{\lambdabar} = ^\sigma \!\! \rho_\lambda$ where $\sigma \in G_\Q \setminus G_M$.
    Therefore $\theta_1 = \det ^\sigma \!\! \rho_{\lambda}|_I = \det \rho_{\lambda}|_{\sigma I \sigma^{-1}}$.
    Hence the restriction of $\det \rho_\lambda$ to the inertia group at $\lambdabar$ is $\theta_1$.
    Hence this quadratic extension is ramified at either $\lambda$ or $\lambdabar$.
    
    Let $M_2$ denote the compositum of the quadratic extensions cut out by the order $2$ Hecke characters $\chi_j$ from \cref{step:AlgC2HomsToF2} of \cref{Alg:C2}.
    Since $M_2$ is unramified outside $N$,
    it is disjoint from $M(A[\lambda])$.
    Therefore, Chebotarev density theorem applied to the Galois extension $M_2 \cdot M(A[\lambda])$ of $M$ with Galois group $(\Z/2\Z)^d \times \GL_3(\ell)$, guarantees the existence of primes $\frakp_i|p_i$ in $M$ such that the characteristic polynomial of $\rho_\lambda(\Frob_{\frakp_i})$ is irreducible over $\FF_\ell$ (so that $f_{p_i,1}$ is irreducible over $\Z_\ell$ and hence $v_{p_i} \ne 0$) and $\chi_j(\frakp_i) = \delta_{i,j}$.
    So for $B$ large enough, the matrix $A$ constructed in \cref{step:AlgC2ConstructMatrix} of \cref{Alg:C2} will have full rank. This shows that the algorithm returns a finite set of primes, and not $0$.

    Let $\ell \ge 5$ be a $\C_2$-prime for $A$. If $A$ does not have semistable reduction at $\ell$, then $\ell \mid N$ and we are done. So we may assume that $\ell$ is a semistable prime for $A$. Then $\rho_\lambda(G_M)$ acts irreducibly on $A[\lambda]$ and is contained in a $\C_2$-type maximal subgroup for a prime $\lambda|\ell$.
    Let $L/M$ be the non-trivial abelian extension from \cref{prop:C2test} unramified outside the bad primes for $A$.
    
    If $[L:M]=2$, there exists a non-trivial character $\chi \in H$ which cuts out $L$. Since the matrix $A$ has full rank, we deduce that there exists a prime $p \in S$ and $\frakp | p$ such that $\chi(\frakp) \ne 0$. This means that $\Frob_\frakp$ is non-trivial in $\Gal(L/M)$. Hence \cref{prop:C2test} implies that either $v_p \equiv 0 \pmod{\lambda}$ or $v_p \equiv 0 \pmod{\lambdabar}$, and therefore $\Nm_{M/\Q}(v_p) \equiv 0 \pmod{\ell}$.

    If $[L:M]=3$, \cref{prop:C2test} says that $\Tr \rho_\lambda(\sigma) \equiv 0 \pmod{\lambda}$ for every non-trivial $\sigma \in \Gal(L/M)$. \cref{thm:AlgC3works} then guarantees that $\ell$ is in the set of primes returned by \cref{Alg:C3}.
\end{proof}

While \cref{Alg:C2} turns the problem of determining $\C_2$-primes into a finite problem,
the output can include false positives, i.e., extraneous primes $\ell$
for which the image $\rho_\lambda(\GM)$ is not contained in a $\C_2$-type maximal subgroup.
So it is useful to have an algorithm, that can decide for a given prime $\ell$, whether $\ell$ can possibly be a $\C_2$-prime by inspecting the reductions modulo $\ell$ of characteristic polynomials of Frobenius elements. \cref{Alg:C2WeedOut} serves this purpose. In particular, it is useful to treat the primes of non-semistable reduction for $A$, for which \cref{Alg:C2} says nothing. 

\begin{algorithm}[Weeding out extraneous primes from possibly $\C_2$-primes]
    \label{Alg:C2WeedOut}
    \hfill\\
    {\em Input:}
    \begin{itemize}
        \item an imaginary quadratic field $M$ such that the polarised abelian threefold $A/\Q$ has potential imaginary multiplication by $M$ of signature $(2,1)$.
        \item $L$-polynomials of $A/\Q$ at each prime $p < B$ of good reduction for $A$ that splits in $M$, for some bound $B$.
        \item a rational prime $\ell$ that is unramified in $M$.
    \end{itemize}
    {\em Output:} true or false depending on whether $\ell$ is possibly a $\C_2$-prime
    or not.
    \begin{enumerate}[leftmargin=!]
        \item \label{step:WeedC2ComputeLPolys} For each prime $p$, using the given $L$-polynomial, follow \cref{step:AlgC2ComputeLPolys} of \cref{Alg:C2} to construct $f_{p,i}(x) = x^3 + a_{p,i} x^2 + b_{p,i} x + c_{p,i}$, with $\overline{f_{p,1}} = f_{p,2}$.
        \item \label{step:WeedC2ReturnFalseC2} Choose a prime $\lambda$ above $\ell$ in $M$, and consider the reduction of $f_{p,1}(x)$ modulo $\lambda$.
        If the reduction does not fully factor in $\FF_\lambda[x]$ into linear polynomials, and is not of the form $x^3-a$, and $v_p \pmod{\lambda} \ne 0$ and $v_p \pmod{\overline{\lambda}} \ne 0$ where $v_p = a_{p,1}b_{p,1}-c_{p,1}$ then return False.
        \item \label{step:WeedC2ReturnTrue} 
        If no return statement has been executed while running \crefrange{step:WeedC2ComputeLPolys}{step:WeedC2ReturnFalseC2},
        then return True.
    \end{enumerate}
\end{algorithm}

Now we want to show that every extraneous prime $\ell$
will be weeded out by \cref{Alg:C2WeedOut}, if sufficiently many $L$-polynomials are available. We prove this in \cref{prop:Alg:C2WeedOutworks}. The main ingredient for this is an understanding of the intersection of the $\C_2$ and $\C_3$-type maximal subgroups, which \cref{lem:PreserveDecomposition} presents in some generality.

\begin{lemma}
    \label{lem:PreserveDecomposition}
    Let $\FF$ be a finite field of size $q$ and $\FF'/\FF$ be an extension of odd prime degree $s$.
    Suppose $H$ is a subgroup of $\GL_1(\FF') \rtimes \Gal(\FF'/\FF)$ such that $H \cap \GL_1(\FF') \leq \{ x \in \GL_1(\FF') |  x^s \in \GL_1(\FF) \}$. Then there exists a decomposition $\FF' = \bigoplus\limits_{i=0}^{s-1} V_i$ of $\FF$-vector spaces such that $H$ preserves the set $D = \{ V_i ~|~ 0 \leq i \leq s-1\}$.
\end{lemma}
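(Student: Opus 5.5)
The plan is to build the decomposition $D$ explicitly from the structure of the group
\[
T \coloneqq \{ x \in \FF'^* \mid x^s \in \FF^* \}.
\]
Then I check that $T$, acting by multiplication, and $\Gal(\FF'/\FF)$ both permute the pieces of $D$. Since every element of $H$ is a product $x g$ with $x \in H \cap \GL_1(\FF') \subseteq T$ and $g \in \Gal(\FF'/\FF)$, this suffices.

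First I determine $T$. We have $\FF^* \subseteq T$, and $T/\FF^*$ is the $s$-torsion of the cyclic group $\FF'^*/\FF^*$, which has order $N = (q^s-1)/(q-1) = 1 + q + \cdots + q^{s-1}$. Since $s$ is prime, $T/\FF^*$ has order $\gcd(s,N) \in \{1,s\}$. If $s \mid q-1$, then $N \equiv s \equiv 0 \pmod{s}$. If $s \nmid q-1$, then either $s = \operatorname{char}\FF$, so $N \equiv 1 \pmod{s}$, or $q$ has order exactly $s$ modulo $s$, which is impossible by Fermat. So the cases split as follows:
\begin{enumerate}[(i)]
    \item if $s \nmid q-1$, then $T = \FF^*$;
    \item if $s \mid q-1$, then $T = \FF^* \langle t \rangle$ for some $t \notin \FF$ with $t^s = a \in \FF^*$.
\end{enumerate}
This case split is the only real point of care. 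I would verify the congruence claims quickly rather than expect difficulty.

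In case (i), every element of $H \cap \GL_1(\FF')$ is a scalar in $\FF^*$. I take a normal basis $\beta, \beta^q, \ldots, \beta^{q^{s-1}}$ of $\FF'/\FF$ and set $V_i = \FF \beta^{q^i}$. The Frobenius permutes the $V_i$ cyclically, and scalars in $\FF^*$ fix each $V_i$, so $H$ preserves $D$.

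In case (ii), since $s$ is prime and $t \notin \FF$, we get $\FF' = \FF(t)$ with basis $1, t, \ldots, t^{s-1}$. I set $V_i = \FF t^i$. Multiplication by $t$ sends $V_i$ to $V_{i+1}$ for $i < s-1$, and sends $V_{s-1}$ to $\FF a = V_0$. Scalars in $\FF^*$ fix each $V_i$, so all of $T$ permutes $D$. For the Frobenius $\phi$, note that $(t^{q-1})^s = a^{q-1} = 1$, so $t^{q-1}$ is an $s$-th root of unity. Since $s \mid q-1$, all $s$-th roots of unity lie in $\FF$, so $t^{q-1} = \zeta \in \FF$. Hence $\phi(t^i) = \zeta^i t^i \in V_i$, and $\Gal(\FF'/\FF) = \langle \phi \rangle$ fixes each $V_i$.

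Combining the two cases, every element $x g$ of $H$ preserves $D$, which proves the lemma.
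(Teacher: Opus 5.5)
There is a genuine gap in your reduction step. It is not true that every element of $H$ can be written as $xg$ with $x\in H\cap\GL_1(\FF')$ and $g\in\Gal(\FF'/\FF)$. An element of $H$ outside $\GL_1(\FF')$ has the form $\alpha\sigma^k$ with $\alpha\in\FF'^*$. The hypothesis only constrains $H\cap\GL_1(\FF')$, so nothing forces $\alpha$ to lie in $H$, or even in $T$.

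Concretely, let $\sigma$ be the $q$-power Frobenius and $H=\langle\alpha\sigma\rangle$ for any $\alpha\in\FF'^*$. Then $(\alpha\sigma)^s=\Nm(\alpha)$, so $H\cap\GL_1(\FF')=\langle\Nm(\alpha)\rangle\subseteq\FF^*$, and the hypothesis holds for every $\alpha$. But $\alpha\sigma$ preserves your $D$ only if $\alpha$ lies in one of at most $s$ cosets of $\FF^*$:
\begin{enumerate}[(i)]
\item In case (i), $\alpha\sigma(\FF\beta^{q^{s-1}})=\FF\alpha\beta$ must equal some $\FF\beta^{q^j}$.
\item In case (ii), $\alpha\sigma(\FF)=\FF\alpha$ must equal some $\FF t^j$, which forces $\alpha\in T$.
\end{enumerate}
Since $\FF'^*/\FF^*$ has order $N=1+q+\cdots+q^{s-1}>s$, most choices of $\alpha$ give an $H$ satisfying the hypothesis that does not preserve your $D$. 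This case is not artificial: in \cref{prop:Alg:C2WeedOutworks}, $H=\rho_\lambda(G_M)$ is an arbitrary subgroup of the $\C_3$-type group. A decomposition depending only on $T$ therefore cannot work; it must be adapted to a twisted Frobenius $\alpha\sigma\in H$.

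The paper's proof does exactly that. Since $s$ is prime, $H\le\langle h,\alpha\sigma\rangle$, where $h$ generates $H\cap\GL_1(\FF')$ and $\alpha\sigma\in H$.
\begin{enumerate}[(a)]
\item If $h\in\FF^*$, only $\alpha\sigma$ matters. As an $\FF$-linear operator it has minimal polynomial $x^s-\Nm(\alpha)$, so $\FF'$ is a cyclic $\FF[\alpha\sigma]$-module. The lines spanned by $v,\alpha\sigma(v),\ldots,(\alpha\sigma)^{s-1}(v)$ are then permuted.
\item If $h\notin\FF$, then $\Nm(\alpha)\in\langle h^s\rangle$. Multiplying $\alpha\sigma$ by a suitable power of $h$ gives $\sigma_1\in H$ with $\sigma_1^s$ an $s$-th power in $\FF^*$. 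So $\sigma_1$ is diagonalisable with eigenvalues $c\omega^i$, and $h$ permutes its eigenspaces because $\sigma_1h=\omega h\sigma_1$.
\end{enumerate}

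Alternatively, you can keep your $D$ and add one step. Conjugation by $\beta\in\FF'^*$ fixes $H\cap\GL_1(\FF')$ pointwise and turns $\alpha\sigma$ into $\alpha\beta^{1-q}\sigma$. By Hilbert 90, you may therefore replace $\alpha$ by any element of the same norm. Moreover $\Nm(T)=\FF^*$:
\begin{enumerate}[(i)]
\item in case (i), because $x\mapsto x^s$ is bijective on $\FF^*$;
\item in case (ii), because $\Nm(t)=a\notin(\FF^*)^s$ and $[\FF^*:(\FF^*)^s]=s$.
\end{enumerate}
Hence some $\beta$ conjugates $H$ into $T\rtimes\Gal(\FF'/\FF)$, which you showed preserves $D$. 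Then $H$ preserves $\{\beta^{-1}V_i\}$.
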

\begin{proof}
    Let $\sigma = \Frob_q$ be a generator of the cyclic group $\Gal(\FF'|\FF) \simeq \Z/s\Z$. Since $H/(H \cap \GL_1(\FF')) \leq \Gal(\FF'|\FF)$ and $s$ is prime, we know that $H \leq \langle H \cap \GL_1(\FF'), \alpha \sigma \rangle$ for some $\alpha \in \GL_1(\FF')$. Since $\GL_1(\FF')$ is cyclic, its intersection with $H$ is cyclic as well. Let $h$ denote a generator of $H \cap \GL_1(\FF')$. We are given that $h^s \in \GL_1(\FF)$.

    \textbf{Case 1: $h \in \GL_1(\FF)$.} Then the action of $H \cap \GL_1(\FF') = \langle h \rangle$ preserves any decomposition of $\FF'$ into $\FF$-linear subspaces. So, all we need to find is a decomposition that is preserved by $\alpha \sigma$. If $\alpha \in \FF$, a decomposition preserved by $\sigma$ is also preserved by $\alpha \sigma$, and the normal basis theorem yields a decomposition preserved by $\sigma$. Suppose $\alpha \not\in \FF$. Then the element $\alpha$ generates the field $\FF'$ over $F$ because $s$ is prime, and $(\alpha \sigma)^s = \Nm(\alpha)$. So the minimal polynomial of the $\FF$-linear operator $\alpha \sigma$ is equal to $T^s - \Nm(\alpha)$ and $\FF' \simeq \FF[T]/(T^s - \Nm(\alpha))$. This isomorphism yields a decomposition $\FF' = \bigoplus\limits_{i=0}^{s-1} \FF \cdot T^i$, which is preserved by the operator $\alpha \sigma$.
    
    \textbf{Case 2: $h \not\in \GL_1(\FF)$.} Let $a = h^s \in \GL_1(\FF)$. Since raising to the $s^{th}$ power is not an isomorphism on $\FF$, we deduce that $q \equiv 1 \pmod{s}$. The element $h$ generates the field $\FF'$ over $\FF$ because $s$ is prime, so that $\FF' = \FF[h] \cong \FF[T]/(T^s-a)$ and $\Nm(h) = a$. Let $\omega \in \FF$ be a primitive $s^{th}$ root of unity, such that $\sigma(h) = \omega h$.
    
    Since $(\alpha \sigma)^s = \Nm(\alpha) \in H \cap \GL_1(\FF) = \langle h^s \rangle = \langle a \rangle$, we deduce that $\Nm(\alpha) = a^n$ for some $n$. Let $n = k s + m$ for integers $k, m$. Then the element $\sigma_1 = h^{-m} \alpha \sigma \in H$ satisfies $\sigma_1^s = \Nm(h^{-m}\alpha) = a^{n-m} = a^{ks}$. So, the minimal polynomial of $\sigma_1$ over $\FF$, which is equal to $T^s - a^{ks}$, completely factors as $\prod\limits_{i=0}^{s-1} (T-a^k \omega^i)$. Let $\FF' = \bigoplus\limits_{i=0}^{s-1} V_i$ be the corresponding decomposition into eigenspaces. Since $\sigma_1 (hx) = h^{-m} \alpha \sigma(hx) = \sigma(h) \sigma_1(x) = \omega h\sigma_1(x)$, we deduce that the action of $h$ permutes the eigenspaces of $\sigma_1$. Therefore, we conclude that $H \leq \langle h, \alpha \sigma \rangle = \langle h, \sigma_1 \rangle$ preserves the set $D = \{ V_i ~|~ 0 \leq i \leq s-1\}$.
\end{proof}

\begin{proposition}
    \label{prop:Alg:C2WeedOutworks}
    Let $A/\Q$ be a polarised abelian threefold, and $M$ be an imaginary quadratic field such that $M \cong \End^0(A_{\Qbar})$.
    Suppose $\ell > 7$ is a rational prime that is not a $\C_1$-prime or a $\C_8/\C_5$-prime for $A$.
    
    Suppose $B$ is large enough, i.e., sufficiently many $L$-polynomials for $A$ are given as input to \Cref{Alg:C2WeedOut}.
    Then $\ell$ is not a $\C_2$-prime for $A$ if and only if \Cref{Alg:C2WeedOut} returns false.
\end{proposition}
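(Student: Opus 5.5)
The plan is to prove the two directions separately. Throughout, fix $\lambda \mid \ell$. For a good prime $p$ split in $M$ with $\frakp \mid p$, the reduction $f_{p,1} \bmod \lambda$ is the characteristic polynomial of $\rho_\lambda(\Frob_\frakp)$ or of $\rho_{\lambdabar}(\Frob_\frakp)$; which one occurs depends on the labelling of $\frakp$ versus $\overline{\frakp}$. Reducing modulo $\lambdabar$ instead swaps the roles. By \cref{thm:modellimagesarecontainedin}, $\rho_{\lambdabar} = {}^\sigma\rho_\lambda$, and the roots of the characteristic polynomial of $\rho_{\lambdabar}(\Frob_\frakp)$ are $p/\alpha_i$, where the $\alpha_i$ are those of $\rho_\lambda(\Frob_\frakp)$; this follows from $\det\rho_\lambda\det\rho_{\lambdabar}$-type duality via the Weil pairing, i.e.\ the root set $\{g^{\ell^i}, pg^{-\ell^i}\}$ used in \cref{thm:AlgC1works}. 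The three properties tested in \cref{step:WeedC2ReturnFalseC2} are the following:
\begin{itemize}
\item the polynomial splits into linear factors;
\item it has the form $x^3-a$;
\item it has two roots summing to zero (equivalently, $v \equiv 0$).
\end{itemize}
Each property is invariant under $\alpha_i \mapsto p/\alpha_i$, since $\alpha_i + \alpha_j = 0$ if and only if $p/\alpha_i + p/\alpha_j = 0$. So it suffices to understand characteristic polynomials of elements of $G \coloneqq \rho_\lambda(G_M)$.

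\emph{If $\ell$ is a $\C_2$-prime, the algorithm returns True.} In this case $G$ lies in a group $\GL^\varepsilon_1(\ell)^3 \rtimes S_3$ preserving $D=\{\langle v_i\rangle\}$. Let $g = \rho_\lambda(\Frob_\frakp)$. There are three cases according to how $g$ acts on $D$. If $g$ acts trivially, it is diagonal and its characteristic polynomial splits over $\FF_\lambda$. If $g$ acts as a $3$-cycle, \cref{lem:imprimitivity_min_polys}(\ref{item:order3elementcharpol}) gives characteristic polynomial $x^3+a$. If $g$ acts as a $2$-cycle, \cref{lem:imprimitivity_min_polys}(\ref{item:order2elementcharpol}) gives $(x+a)(x^2+b) = x^3+ax^2+bx+ab$, so $v_p \equiv ab-ab = 0$ modulo the relevant prime. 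Hence no $p$ triggers the False branch.

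\emph{If $\ell$ is not a $\C_2$-prime, some Frobenius triggers False.} By Chebotarev applied to $M(A[\lambda])/M$, and since degree-one primes have full density, every element of $G$ occurs as $\rho_\lambda(\Frob_\frakp)$ for some split good $p < B$ once $B$ is large. So it suffices to exhibit $g \in G$ whose characteristic polynomial $\chi_g$ over $\FF_\lambda$ satisfies three conditions: it does not split, it is not of the form $x^3-a$, and no two of its roots sum to $0$. By hypothesis $G$ is not contained in a $\C_1$, $\C_2$ or $\C_8/\C_5$ subgroup. I will use that for $\ell > 7$ the $\C_6$ and $\mathcal{S}$ cases cannot occur, citing the paper's statements that only $5,7$ and $3$ respectively are possible. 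If needed, I will argue this directly from \cref{prop:inertia_at_l_description}: inertia at $\ell$ contains a cyclic image of order at least $\ell-1$ modulo scalars, which is too large for $\ASL_2(3)$ or $\PSL_2(7)$ once $\ell > 7$. This leaves two cases.

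First, $G \supseteq \SL^\varepsilon_3(\ell)$. Then I take a regular semisimple element whose eigenvalues are $\alpha, \alpha^q, \alpha^{q^2}$ with $\alpha$ of large order in $\FF_{q^3}^*$, where $q = |\FF_\lambda|$; concretely, a suitable power of a Singer-type element of $\SL^\varepsilon_3$, adjusted by an element of $G$ of suitable determinant. Its characteristic polynomial is irreducible over $\FF_\lambda$. It is of the form $x^3-a$ only if $\alpha^{q-1}$ is a cube root of unity. Two roots sum to zero only if $\alpha^{q-1} = -1$ or a conjugate relation holds, which forces $\alpha^{2(q-1)} = 1$. Both are excluded once the order of $\alpha$ exceeds $3(q-1)$ and $2(q-1)$, which is available for $\ell > 7$.

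Second, $G$ lies in a $\C_3$ subgroup $\GL^\varepsilon_1(\ell^3) \rtimes C_3$ but in no $\C_2$ subgroup. Applying \cref{lem:PreserveDecomposition} with $s = 3$ in contrapositive form, there is $h \in G \cap \GL_1(\FF')$ with $h^3 \notin \FF_\lambda$. Such an $h$ lies outside $\FF_\lambda$, so its characteristic polynomial, with roots $h, h^q, h^{q^2}$, is irreducible. It is not $x^3-a$, since otherwise $h^{3(q-1)}$ would be trivial and hence $h^3 \in \FF_\lambda$. Two roots summing to zero means $h^{q^i(q^j-1)} = -1$, so $h^{q-1}$ or $h^{q^2-1}$ has order $2$. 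The main obstacle is excluding this from $h^3 \notin \FF_\lambda$ alone. If it fails for $h$, I plan to replace $h$ by $h^2$ or by $h\cdot z$ for a scalar $z \in G$ to kill the order-$2$ ratio while keeping the cube outside $\FF_\lambda$. This adjustment is the step where care, and possibly a small case analysis on $\gcd(q-1, 6)$, is needed.
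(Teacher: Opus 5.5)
Your argument follows the paper's proof. The ``True'' direction comes from \cref{lem:imprimitivity_min_polys}. For the converse you use $\ell>7$ to exclude the $\C_6$ and $\mathcal{S}$ cases, reduce to the full-image case or the $\C_3$ case, use \cref{lem:PreserveDecomposition} to obtain $h\in G\cap\FF'^*$ with $h^3\notin\FF_\lambda$, and conclude by Chebotarev. You also note that the tested properties are invariant under $\alpha_i\mapsto p/\alpha_i$. This makes explicit why the labelling of $f_{p,1}$, and the double test modulo $\lambda$ and $\lambdabar$, are harmless, which the paper leaves implicit.

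Your proposal is incomplete only at the step you call the main obstacle, and that step is immediate, so no modification of $h$ (by $h^2$ or by a scalar) is needed. An irreducible cubic over $\FF_\lambda$ never has two roots summing to $0$:
\begin{itemize}
\item If $h^{q^k}=-h$ with $k\in\{1,2\}$, then $h^{q^{2k}}=h$.
\item Combined with $h^{q^3}=h$ and $\gcd(2k,3)=1$, this forces $h\in\FF_\lambda$, a contradiction.
\end{itemize}
Equivalently, $v=ab-c$ vanishes exactly when $x^3+ax^2+bx+c=(x+a)(x^2+b)$, which is reducible. So ``irreducible and not of the form $x^3-a$'' already triggers the False branch, for both $\lambda$ and $\lambdabar$, and this is all the paper checks.

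The same remark makes your order estimates for the sum-to-zero condition in the full-image case unnecessary. There is also an imprecision in that case. In the unitary case, elements of $\SU_3(\ell)$ with irreducible characteristic polynomial have order dividing $\ell^2-\ell+1<3(\ell^2-1)$. So the criterion you should use to avoid $x^3-a$ is that the order of $\alpha$ does not divide $3(q-1)$, which holds for $\ell>7$. The condition that it exceeds $3(q-1)$ is not met by these elements.
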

\begin{proof}
    Let $\lambda|\ell$ be a prime of $\OM$, and $\FF = \FF_\lambda$. The if direction follows in a straightforward way from \cref{lem:imprimitivity_min_polys}, which implies 
    that the characteristic polynomials of elements of the $\C_2$-type maximal subgroup have one of the following shapes:
    $\prod\limits_{i=1}^3 (x-a_i)$,
    $x^3 - a_1$ or
    $(x-a_1)(x^2-a_2)$, for some $a_i \in \FF$.

    Now we prove the converse.
    Since $\ell$ is not a $\C_1$-prime, we know that $\rho_\lambda(G_M)$ acts irreducibly on $\FF^3$.
    We are given that $\ell$ is not a $\C_2$-prime or a $\C_8/\C_5$-prime.
    Since $\ell > 7$, we know by \cref{subsec:C6test,subsec:ClassStest} that $\rho_\lambda(G_M)$ is not contained in a $\C_6$-type or $\mathcal{S}$-type maximal subgroup.
    Therefore, by \cref{prop:det_surjects_and_GM_surjects_mod_lambda_iff_GMZetaell_surjects} and the classification in \cref{section_maximal_sbgps} of maximal subgroups of $\CLe_3(\ell)$ not containing $\SL^\varepsilon_3(\ell)$,  we get that either $\rho_\lambda(G_M) = \CLe_3(\ell)$ 
    or $\rho_\lambda(G_M)$ is contained in a $\C_3$-type maximal subgroup. 
    
    If $\rho_\lambda(G_M) = \CLe_3(\ell)$, the Chebotarev density theorem guarantees the existence of primes $\frakp|p$ such that the characteristic polynomial of $\rho_\lambda(\Frob_\frakp)$ is an irreducible cubic polynomial over $\FF$ not of the form $x^3-a$. In other words, the reduction in $\FF[x]$ of the polynomial $f_{p,1}$ fits the conditions being checked in \cref{step:WeedC2ReturnFalseC2} of \cref{Alg:C2WeedOut}. So for $B$ large enough, i.e., when sufficiently many $L$-polynomials for $A$ are given as input, \cref{Alg:C2WeedOut} will return False.

    Now suppose $\rho_\lambda(G_M)$ is contained in a $\C_3$-type maximal subgroup. Let $\FF'|\FF$ be the degree $3$ extension in a fixed algebraic closure.\\
    \textbf{Case 1: $\ell$ is split in $M$.}
    Fix a natural inclusion $\psi \colon  \FF'^* \rtimes \Gal(\FF'/\FF) \xhookrightarrow{} \GL_3(\FF)$ such that $\rho_\lambda(G_M) \subseteq \im (\psi)$.
    Let $H = \{x \in \FF'^* | x^3 \in \FF^* \}$. If $\rho_\lambda(G_M) \cap \psi(\FF'^*) \leq \psi(H)$, then \cref{lem:PreserveDecomposition}
    say that $\rho_\lambda(G_M)$ preserves a decomposition $\FF' = \bigoplus\limits_{i=1}^3 ~\langle v_i \rangle$. This will imply that $\rho_\lambda(G_M)$ is contained in a $\C_2$-type maximal subgroup, contradicting the assumption that $\ell$ is not a $\C_2$-prime. So we deduce that $\rho_\lambda(G_M) \cap \psi(\FF'^*) \not\leq \psi(H)$, i.e., there exists $\alpha \in \FF'^*$ such that $\alpha^3 \not\in \FF^*$ and $\psi(\alpha) \in \rho_\lambda(G_M)$.\\
    \textbf{Case 2: $\ell$ is inert in $M$.}
    Then $[\FF:\FF_\ell] = 2$ and $[\FF':\FF_\ell] = 6$. Let $\FF'' \subset \FF'$ be the degree $3$ extension of $\FF_\ell$. For any quadratic field extension $l|k$, let $l_1$ denote the subgroup $\ker( \Nm : l^* \rightarrow k^*)$ of $l^*$, so that $\CL^-_1(\FF'') = \CU_1(\FF'') \cong \FF'_1$.
    Fix an inclusion $\psi \colon  \FF'_1 \rtimes \Gal(\FF'/\FF) \xhookrightarrow{} \FF'^* \rtimes \Gal(\FF'/\FF) \xhookrightarrow{} \CU_3(\ell)$ such that $\FF^* \im (\psi)$ is a maximal subgroup of $\CU_3(\ell)$ containing $\rho_\lambda(G_M) $.
    Let $H = \{x \in \FF'_1 | x^3 \in \FF_1 \}$. \Cref{lem:PreserveDecomposition}
    again implies that the intersection $\rho_\lambda(G_M) \cap \psi(\FF'_1)$ is not contained in $\psi(H)$, i.e., there exists $\alpha \in \FF'_1$ such that $\alpha^3 \not\in \FF_1$ and $\psi(\alpha) \in \rho_\lambda(G_M)$.
    
    The characteristic polynomial of $\psi(\alpha)$ is the minimal polynomial over $\FF$ of $\alpha \in \FF'$, and hence it is irreducible over $\FF$ and not of the form $x^3-a$. Chebotarev density theorem guarantees the existence of primes $\frakp|p$ such that $\rho_\lambda(\Frob_\frakp)$ is conjugate to $\psi(\alpha)$. Therefore the reduction in $\FF[x]$ of the polynomial $f_{p,1}$ fits the conditions being checked in \cref{step:WeedC2ReturnFalseC2}. So for $B$ large enough, \cref{Alg:C2WeedOut} will return False.
\end{proof}

\subsubsection{$\C_3$ field extension subgroups}
\label{subsec:C3test}

\begin{lemma}
\label{lem:C3_type_large_order_elements}
    Let $\FF$ be a finite field and $\FF'/\FF$ an extension of prime degree $s$.
    Then any element in $\GL_1(\FF') \rtimes \Gal(\FF'/\FF)$ of order greater than $s|\FF^*|$ is contained in the normal subgroup $\GL_1(\FF')$.
\end{lemma}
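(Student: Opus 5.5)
We argue by contraposition: any element $g = x\sigma$ of $\GL_1(\FF') \rtimes \Gal(\FF'/\FF)$ with $x \in \FF'^*$ and $\sigma \neq 1$ has order at most $s|\FF^*|$. Write $q = |\FF|$. The group $\Gal(\FF'/\FF)$ is cyclic of prime order $s$, generated by $\Frob_q$. Hence $\sigma = \Frob_q^j$ for some $1 \le j \le s-1$, and $\sigma$ itself has order exactly $s$. Composing with the projection to $\Gal(\FF'/\FF)$ shows that the order of $g$ is divisible by $s$. So it suffices to compute $g^s$ and bound its order.

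The key computation is
\[
g^s = x\,\sigma(x)\,\sigma^2(x)\cdots\sigma^{s-1}(x)\,\sigma^s .
\]
Since $\sigma$ generates $\Gal(\FF'/\FF)$, the conjugates $\sigma^i(x)$ for $0 \le i \le s-1$ run over all Galois conjugates of $x$. Together with $\sigma^s = 1$, this gives $g^s = \Nm_{\FF'/\FF}(x) \in \FF^*$, viewed as an element of $\GL_1(\FF')$. (Here we use the multiplication rule $(x\sigma)(y\tau) = x\sigma(y)\,\sigma\tau$ in the semidirect product, which is the natural action on $\FF'$.)

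It follows that the order of $g^s$ divides $|\FF^*| = q-1$. Hence the order of $g$ divides $s(q-1)$, and in particular is at most $s|\FF^*|$. Therefore any element of order greater than $s|\FF^*|$ must have trivial Galois component, so it lies in the normal subgroup $\GL_1(\FF')$.

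There is no real obstacle here. The only point needing care is to verify that the product of the $\sigma^i(x)$ is indeed the norm. This holds even when $j \neq 1$, because $\sigma^i$ for $0 \le i < s$ still enumerates the whole Galois group, since $s$ is prime.
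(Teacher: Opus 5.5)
Your proof is correct and is essentially the paper's argument. For an element $x\sigma$ with $\sigma \neq 1$, you compute $(x\sigma)^s = \Nm_{\FF'/\FF}(x) \in \FF^*$, using that the powers of $\sigma$ exhaust $\Gal(\FF'/\FF)$ because $s$ is prime, and conclude that its order divides $s|\FF^*|$.
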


\begin{proof}
    We may write $\GL_1(\FF') \rtimes \Gal(\FF'/\FF) \cong (\FF')^* \rtimes \Gal(\FF'/\FF)  \cong \langle a \rangle \rtimes \langle f \rangle $ where $a$ has order $|\FF'|-1$, $f$ has order $s$ and $faf^{-1}= a^{|\FF|}$.
    Let $b \in \GL_1(\FF') \rtimes \Gal(\FF'/\FF)$.
    The relation $fa= a^{|\FF|} f$ allows us to write $b=a^m f^n$ for some $m,n$.
    
    If $n \equiv 0 \pmod{s}$, then $b$ is a power of $a$.
    Thus we may suppose that $n \not \equiv 0 \pmod{s}$ and show that the order of $b$ is at most $s|\FF^*|$.
    It will then follow that any element of order great than $s|\FF^*|$ is contained in $\langle a \rangle $.
    Considering $a$ as an element of $\FF'$, we have
    \[b^s = (a^mf^n)^s = \prod_{i=0}^{s-1} f^{in}a^m f^{-in} = \prod_{i=0}^{s-1} f^{i}a^m f^{-i} = \Nm(a^m) \in \FF^*\]
    where the penultimate equality follows from the fact that multiplication by $n$ induces an automorphism of $\ZZ/s\ZZ$.
    We deduce that the order of $b$ divides $s|\FF^*|$.
    Hence the result follows as explained above.
\end{proof}

\begin{proposition}
\label{prop:C3test}
    Let $\ell \geq 5$ be a semistable admissible prime for $A$ and $\lambda$ a prime of $M$ lying over $\ell$.
    Suppose $\rho_\lambda(G_M)$ is an absolutely irreducible subgroup contained in a $\C_3$-type subgroup.
    
    Then there exists an abelian extension $L/M$ of degree 3, unramified outside the primes of bad reduction for $A$, such that if the image of $\sigma \in \GM$ in $\Gal(L/M)$ is non-trivial then $\Tr \rho_\lambda(\sigma) \equiv 0 \pmod{\lambda}$.
\end{proposition}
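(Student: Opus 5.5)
We follow the same strategy as for \cref{prop:C2test}. Suppose $\rho_\lambda(G_M)$ is absolutely irreducible and lies in a $\C_3$-type subgroup $N$. In the split case, $N$ is the normaliser of the torus $\FF_{\ell^3}^*$, i.e.\ $N \cong \GL_1(\ell^3) \rtimes \Gal(\FF_{\ell^3}/\FF_\ell)$. In the inert case, $N$ is the analogous group $\FF_{\ell^2}^*\cdot(\GU_1(\ell^3)\rtimes C_3)$ inside $\CU_3(\ell)$. In both cases $N$ has a normal abelian subgroup $T$ (the scalars of $\FF'$, together with the centre of $\CLe_3(\ell)$ in the unitary case) with $N/T \cong C_3$.

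We compose $\rho_\lambda$ with $N \to N/T \cong C_3$ and let $L$ be the fixed field of the kernel. Then $L/M$ is abelian of degree dividing $3$. If the map were trivial, then $\rho_\lambda(G_M) \subseteq T$ would be abelian, and an abelian group acting absolutely irreducibly on a $3$-dimensional space is impossible, since it would split into characters over $\overline{\FF}_\lambda$. Hence $[L:M]=3$. Because $L \subseteq M(A[\lambda])$, the extension $L/M$ is unramified outside $\ell$ and the bad primes.

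For the trace condition, take $\sigma$ with nontrivial image in $\Gal(L/M)$, so $\rho_\lambda(\sigma)=a f^n$ with $n\not\equiv 0 \pmod 3$. In the normal basis $\{\beta,\beta^{q},\beta^{q^2}\}$ of $\FF'/\FF$, the Frobenius $f$ permutes the basis vectors cyclically and $a\in \FF'^*$ multiplies them, so $af^n$ sends each line $\langle \beta^{q^i}\rangle$ to a different one. Alternatively, $\Tr_{\FF}(x\mapsto a\,x^{q^n})=0$ for $n\not\equiv0$, by a standard computation with the $\FF'$-basis of twisted endomorphisms, which gives $\End_\FF(\FF')=\bigoplus_j \FF' f^j$ and traces supported on $j=0$. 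Either way $\Tr\rho_\lambda(\sigma)=0$. In the unitary case the same argument applies after identifying $A[\lambda]$ with $\FF'$ as a $1$-dimensional $\FF'$-space, with $\Gal(\FF'/\FF)$ of order $3$.

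The main obstacle is showing that $L/M$ is unramified at $\ell$, using semistability and $\ell\ge5$. For an inertia group $I$ above $\ell$, the description in \cref{prop:inertia_at_l_description} gives $\rho_\lambda(I)$ as a cyclic group (tame part) whose image is the product of fundamental characters listed there. We want to show $\rho_\lambda(I)\subseteq T$.

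The first step is to handle the wild part. $N$ has order prime to $\ell$ when $\ell\ge5$, since $|N|$ divides $3(\ell^3-1)$ or $3(\ell^2-1)(\ell^3+1)$, so wild inertia acts trivially. The image of $I$ is therefore cyclic, generated by some $g$.

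The second step is to rule out $g\notin T$. If $g\notin T$, then by \cref{lem:C3_type_large_order_elements} its order is at most $3|\FF^*|$ (or $3$ times the order of the relevant $\FF_1$ times the centre). Elements of $N\setminus T$ have trace $0$ and cube lying in $\FF^*$-scalars. So the three characters in $(\rho_\lambda|_I)^{ss}$ would have to be $\psi,\psi\omega,\psi\omega^2$ with $\omega$ a primitive cube root of unity.

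The third step is to check each case of \cref{prop:inertia_at_l_description} against this form. The cases $2\mathbbm{1}+\theta_1$ and $\mathbbm{1}+2\theta_1$ contain a repeated character. The case $\mathbbm{1}+\theta_2+\theta_2^\ell$ would force $\theta_2$ to have order $3$, contradicting the surjectivity of $\theta_2$ with $\ell\ge5$. The level-$3$ and level-$6$ cases have images of order far exceeding $3|\FF^*|$. The inert cases are handled similarly, using that $\theta_2$ has order $\ell^2-1\ge24$.

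Hence $\rho_\lambda(I)\subseteq T$, and $L/M$ is unramified at $\ell$. This checking of the finitely many inertia types is the delicate part.
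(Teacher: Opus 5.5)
Your proposal is correct and follows essentially the same route as the paper. You take $L$ to be cut out by the preimage of the normal abelian subgroup $T$, use absolute irreducibility to force $[L:M]=3$, show elements outside $T$ have trace zero, and get unramifiedness at $\ell$ by combining \cref{lem:C3_type_large_order_elements} with \cref{prop:inertia_at_l_description}. The only difference is cosmetic: you dispose of the $2\mathbbm{1}+\theta_1$-type cases because elements of $N\setminus T$ have three distinct eigenvalues forming a $\mu_3$-coset, whereas the paper does this via \cref{lem:imprimitivity_min_polys}.

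One correction: your normal-basis justification of the trace claim is wrong as stated, since multiplication by $a\in\FF'\setminus\FF$ does not preserve the $\FF$-lines $\langle\beta^{q^i}\rangle$. What works, and what the paper uses, is that over $\FF'$ the torus acts diagonally with eigenvalues $a,a^{q},a^{q^2}$ and $f$ permutes these eigenlines cyclically; this is exactly what your crossed-product alternative encodes, so the trace claim still stands.
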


\begin{proof}
    Suppose $\ell$ splits in $M$.
    The description given in Section \ref{section:fieldextnsbgps} of $\C_3$-type subgroups of $\GL_3(\ell)$ implies that there exists a injective homomorphism $\psi \colon  \FF_{\ell^3}^* \rtimes \Gal(\FF_{\ell^3}/\FF_\ell) \rightarrow \GL_3(\ell)$ such that $\rho_\lambda(G_M)$ is contained in the image of $\psi$.
    Since $\rho_\lambda(G_M)$ is absolutely irreducible by assumption, it cannot be contained in $\psi(\FF_{\ell^3}^*)$.
    Thus $Q \coloneqq \rho_\lambda(G_M)/(\rho_\lambda(G_M) \cap \psi(\FF_{\ell^3}^*))$ is a cyclic quotient of $\rho_\lambda(G_M)$ of order 3.
    Let $L$ denote the fixed field of $\rho_\lambda(G_M) \cap \psi(\FF_{\ell^3}^*)$.

    Let $a$ be a generator of $\FF_{\ell^3}^*$.
    Then the eigenvalues of $\psi(a)$ equal $a, a^\ell, a^{\ell^2}$ and any generator $f$ of $\psi (\Gal(\FF_{\ell^3}/\FF_\ell))$ permutes them cyclically.
    In particular, we find that the trace of such an $f$ must equal zero.

    Let $I$ be an inertia group corresponding to a prime above $\ell$.
    As $\ell \neq 3$, the action of $I$ on $A[\lambda]$ factors through its tame quotient and thus acts semisimply.
    In particular, its action is described by Proposition \ref{prop:inertia_at_l_description}.

    Suppose for a contradiction that $L/M$ is ramified at $\ell$. 
    Then there exists $\sigma \in I$ with non-trivial image in $\Gal(L/M)$.
    If the order of $\rho_\lambda(\sigma)$ is greater than $3(\ell-1)$, then $\rho_\lambda(\sigma) \in \psi(\FF_{\ell^3}^*)$ by Lemma \ref{lem:C3_type_large_order_elements}.
    As $L$ is the fixed field of $\rho_\lambda(G_M) \cap \psi(\FF_{\ell^3}^*)$, we deduce that the order of $\rho_\lambda(\sigma)$ is less than or equal to $3(\ell-1)$.
    Hence \cref{prop:inertia_at_l_description} combined with $\ell \geq 5$, implies $\rho_\lambda|_I = 2 \mathbbm{1}+\theta_1$ or $\mathbbm{1}+2\theta_1$.
    We deduce that the minimal polynomial of $\rho_\lambda(\sigma)$ is of the form $(x-1)(x-d)$ for some $d \in \FF_\ell^*$ of order divisible by 3.
    Applying Lemma \ref{lem:imprimitivity_min_polys} with $V = A[\lambda] \otimes \FF_{\ell^3}$ and $D(V)$ the decomposition given by the eigenspaces of $\psi(a)$ gives that $\rho_\lambda(\sigma)$ has order 2, a contradiction.

    Suppose now $\ell$ is inert in $M$.
    The maximal $\C_3$-type subgroups of $\CU_3(\ell)$ are isomorphic to $\FF_\lambda^*(\GU_1(\ell^3) \rtimes \Gal(\FF_{\ell^6}/\FF_{\ell^2}))$.
    More precisely, any such maximal subgroup of $\CU_3(\ell)$ is the image of an injective homomorphism $\psi \colon \FF_\lambda^*(\GU_1(\ell^3) \rtimes \Gal(\FF_{\ell^6}/\FF_{\ell^2})) \rightarrow \CU_3(\ell)$ such that $\psi(\FF_\lambda^*)$ is the scalar matrices in $\CU_3(\ell)$, the image of a generator $a$ of $\GU_1(\ell^3)$ has eigenvalues $a,a^{\ell^2},a^{\ell^4}$ which are permuted cyclically by the non-trivial elements in $\psi(\Gal(\FF_{\ell^6}/\FF_{\ell^2}))$.
    In particular, any non-trivial element in $\psi(\Gal(\FF_{\ell^6}/\FF_{\ell^2}))$ has trace zero.

    Since $\rho_\lambda(G_M)$ is absolutely irreducible by assumption, it cannot be contained in $\psi(\FF_\lambda\GU_1(\ell^3))$.
    Thus $Q \coloneqq \rho_\lambda(G_M)/(\rho_\lambda(G_M) \cap \psi(\FF_\lambda\GU_1(\ell^3))$ is a cyclic quotient of $\rho_\lambda(G_M)$ of order 3.
    Let $L$ denote the fixed field of $\rho_\lambda(G_M) \cap \psi(\FF_\lambda\GU_1(\ell^3))$.

    Let $I$ be an inertia group corresponding to a prime above $\ell$.
    As $\ell \neq 3$, the action of $I$ on $A[\lambda]$ factors through its tame quotient and thus acts semisimply.
    In particular, its action is described by Proposition \ref{prop:inertia_at_l_description}.

    Suppose for a contradiction that $L/M$ is ramified at $\ell$. 
    Then there exists $\sigma \in I$ with non-trivial image in $\Gal(L/M)$.
    If the order of $\rho_\lambda(\sigma)$ is greater than $3(\ell^2-1)$, then $\rho_\lambda(\sigma) \in \psi(\FF_\lambda\GU_1(\ell^3))$ by Lemma \ref{lem:C3_type_large_order_elements}.
    As $L$ is the fixed field of $\rho_\lambda(G_M) \cap \psi(\FF_\lambda\GU_1(\ell^3))$, we deduce that the order of $\rho_\lambda(\sigma)$ is less than or equal to $3(\ell^2-1)$.
    Hence \cref{prop:inertia_at_l_description} combined with $\ell \geq 5$, implies $\rho_\lambda|_I = \mathbbm{1}+\theta_1+\theta_2$ or $2 \theta_2 + \theta_2^{\ell}$.
    In the first case, the minimal polynomial of $\rho_\lambda(\sigma)$ is of the form $(x-1)(x-d)(x-\gamma)$ for some $d \in \FF_\ell^*$ and $\gamma \in \FF_\lambda \setminus \FF_\ell$.
    Applying Lemma \ref{lem:imprimitivity_min_polys} with $V = A[\lambda] \otimes \FF_{\ell^3}$ and $D(V)$ the decomposition given by the eigenspaces of $\psi(a)$ gives that $\rho_\lambda(\sigma)$ has order 2, a contradiction. 
    In the second case, i.e., $\rho_\lambda|_I = 2 \theta_2 + \theta_2^{\ell}$, consider $\gamma = \theta_2(\sigma)^{-1} \in \FF_\lambda^*$. The minimal polynomial of $\psi(\gamma) \rho_\lambda(\sigma)$ is of the form $(x-1)(x-d)$ for some $d \in \FF_\lambda^*$.
    Applying Lemma \ref{lem:imprimitivity_min_polys} again for the decomposition $D(V)$ gives that $\psi(\gamma) \rho_\lambda(\sigma)$ has order 2, i.e., $\rho_\lambda(\sigma)^2 = \psi(\gamma)^2$. This is a contradiction since $\rho_\lambda(\sigma)^2$ has non-trivial image in $\Gal(L/M)$, while $\psi(\gamma)^2$ does not.
\end{proof}

We now adapt \cref{prop:C3test} to turn it into an algorithm for detecting all possibly $\C_3$-primes for an abelian threefold $A$. The exact same idea as in \cref{Alg:C2}, of working with a basis of  homomorphisms from $G_M$ to $\FF_3$, is used to optimize the number of primes that need to be considered.

\begin{algorithm}[Possibly $\C_3$-primes]
    \label{Alg:C3}
    \hfill\\
    {\em Input:} Same as in \cref{Alg:C2}.\\

    {\em Output:} a finite superset of the $\C_3$-primes, or $0$.

    \begin{enumerate}[leftmargin=!]
        \item \label{step:AlgC3HomsToF3} Use the conductor exponent bounds from 
        \cref{rem:ConductorExponentBound} to compute a basis 
        $\{\chi_j : 1 \leq j \leq d\}$ for the $\FF_3$-vector space $H$ of Hecke 
        characters of $M$ unramified outside $N$, and having order dividing $3$. 
        Fix a primitive cube root of unity, and treat these characters as 
        homomorphisms $G_M \rightarrow \FF_3$.
        \item \label{step:AlgC3ComputeLPolys} For each prime $p < B, p \nmid N$, follow 
        \cref{step:AlgC2ComputeLPolys} of \cref{Alg:C2} to construct 
        $f_{p,i}(x) = x^3 + a_{p,i} x^2 + b_{p,i} x + c_{p,i}$, with 
        $\overline{f_{p,1}} = f_{p,2}$, from the $L$-polynomial.
        \item \label{step:AlgC3ConstructMatrix}
        Let $S$ denote the set of primes $p$ such that 
        $a_{p,1} \ne 0$.
        Let $S = \{p_i : 1 \leq i \leq m\}$. For each prime $p_i \in S$, choose a prime ideal $\frakp_i$ above $p_i$. Construct the $m \times d$ matrix $A \in M_{m,d}(\FF_3)$ 
        whose $(i,j)^{th}$ entry is 
        $\chi_j(\frakp_i) = \chi_j(\Frob_{\frakp_i}) \in \FF_3$.
        \item \label{step:AlgC3NotFullRankRepeat} If $\rank(A) < d$, return $0$.
        \item \label{step:AlgC3Return} Return the set of all rational primes dividing
        \begin{align*}
            6 N \cdot \prod\limits_{p \in S} \Nm_{M/\Q}(a_{p,1})
        \end{align*}
    \end{enumerate}
\end{algorithm}

\begin{theorem}
    \label{thm:AlgC3works}
    Let $A/\Q$ be a polarised abelian threefold, and $M$ be an imaginary quadratic field such that $M \cong \End^0(A_{\Qbar})$.
    Suppose \cref{assumption:surjective} holds.
    Then, for $B$ large enough, \Cref{Alg:C3} returns a finite superset of the $\C_3$-primes for $A$.
\end{theorem}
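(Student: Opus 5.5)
The argument mirrors the proof of \cref{thm:AlgC2works}, and has two parts: show that for $B$ large the algorithm does not return $0$, and show that every $\C_3$-prime divides the returned integer.

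\textbf{Non-degeneracy.} By \cref{assumption:surjective}, I fix a rational prime $\ell_0$ split in $M$ and $\lambda_0\mid\ell_0$ with $\rho_{\lambda_0}(\GM)=\GL_3(\ell_0)$; I may take $\ell_0>3$. Let $M_3$ be the compositum of the cyclic cubic extensions cut out by the basis characters $\chi_j$. I claim $M_3$ and $M(A[\lambda_0])$ are linearly disjoint over $M$. Indeed, the abelian cubic quotients of $\GL_3(\ell_0)$ factor through $\det$, and by \cref{prop:det_char_at_ell} the determinant restricted to $I_{\lambda_0}$ or $I_{\lambdabar_0}$ is $\theta_1$ or $\theta_1^2$. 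Hence any cubic subextension of $M(A[\lambda_0])/M$ is ramified above $\ell_0$, and choosing $\ell_0\nmid N$ makes it disjoint from $M_3$. Now I apply Chebotarev to $M_3\cdot M(A[\lambda_0])$, whose Galois group over $M$ is $(\Z/3)^d\times\GL_3(\ell_0)$. This gives primes $\frakp_i$ with $\chi_j(\frakp_i)=\delta_{ij}$ and $\Tr\rho_{\lambda_0}(\Frob_{\frakp_i})\ne 0$ (for instance, take the Frobenius to be the identity in $\GL_3(\ell_0)$). Then $a_{p_i,1}\not\equiv 0\pmod{\lambda_0}$, so $p_i\in S$ and $A$ has full rank for $B$ large. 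There is one detail to handle: $f_{p,1}$ must be correctly matched with $\lambda_0$ rather than $\lambdabar_0$. Both factors are conjugate, and a nonzero trace modulo either prime forces $a_{p,1}\ne 0$, so this causes no trouble.

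\textbf{Containment.} Let $\ell$ be a $\C_3$-prime. If $\ell\le 3$ or $\ell\mid N$ (in particular if $\ell$ is not semistable admissible), then $\ell\mid 6N$ and we are done. Otherwise $\ell\ge5$ is semistable admissible, and $\rho_\lambda(\GM)$ is irreducible and lies in a $\C_3$-type subgroup. By \cref{def:C2C3primes} a $\C_3$-prime is not a $\C_1$-prime, so the image is absolutely irreducible. \cref{prop:C3test} then gives a cyclic cubic $L/M$, unramified outside the bad primes, such that $\Tr\rho_\lambda(\sigma)\equiv0\pmod\lambda$ whenever $\sigma$ is nontrivial on $L$. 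By \cref{rem:ConductorExponentBound}, $L$ is cut out by a nonzero $\chi\in H$. Since $A$ has full rank, some $p_i\in S$ satisfies $\chi(\frakp_i)\ne0$, so $\Frob_{\frakp_i}$ is nontrivial in $\Gal(L/M)$.

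The characteristic polynomial of $\rho_\lambda(\Frob_{\frakp_i})$ is the reduction of $f_{p_i,1}$ or $f_{p_i,2}$ modulo $\lambda$, so its trace is $-a_{p_i,1}$ or $-\overline{a_{p_i,1}}$. For $p_i$ inert, $f_{p,1}=f_{p,2}\in\Z[x]$ and the same statement holds. It follows that $\lambda$ or $\lambdabar$ divides $a_{p_i,1}$, hence $\ell\mid\Nm_{M/\Q}(a_{p_i,1})$, which is nonzero because $p_i\in S$.

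\textbf{Expected obstacle.} The main difficulty is the disjointness step in the non-degeneracy argument. The claim that all cubic characters of $\GL_3(\ell_0)$ factor through $\det$ (with $\SL_3$ perfect for $\ell_0>3$), together with the ramification of $\det\rho_{\lambda_0}$ above $\ell_0$ via \cref{prop:det_char_at_ell}, needs care. Conjugate primes complicate matters as in \cref{thm:AlgC2works}: one must use $^\sigma\rho_{\lambda_0}=\rho_{\lambdabar_0}$ to guarantee ramification at one of $\lambda_0,\lambdabar_0$.
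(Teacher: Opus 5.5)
Your proposal is correct and follows the paper's proof step for step. Disjointness of $M_3$ and $M(A[\lambda_0])$ comes from the ramification of $\det\rho_{\lambda_0}$ above $\ell_0$, then Chebotarev in $M_3\cdot M(A[\lambda_0])$ gives full rank, and \cref{prop:C3test} with the full-rank matrix forces $\ell\mid\Nm_{M/\Q}(a_{p,1})$. Your explicit choice $\ell_0\nmid N$ is what makes the disjointness work. The appeal to ${}^\sigma\rho_{\lambda_0}=\rho_{\lambdabar_0}$ in your last paragraph is unnecessary, since \cref{prop:det_char_at_ell} already gives ramification at both $\lambda_0$ and $\lambdabar_0$.

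One small tightening, which the paper also omits: do not take $\Frob_{\frakp_i}$ to be the identity in $\GL_3(\ell_0)$. Take instead an element with irreducible characteristic polynomial and nonzero trace. Then $f_{p_i}$ is guaranteed to split into two irreducible cubics over $\OM$, so $p_i$ is not discarded in \cref{step:AlgC2ComputeLPolys} of \cref{Alg:C2} and really lies in $S$.
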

\begin{proof}
    By \cref{assumption:surjective}, choose a rational prime $\ell$ splitting in $M$, and $\lambda$ above $\ell$ so that $\rho_\lambda(G_M) = \GL_3(\ell)$.
    Let $M(A[\lambda])$ denote the fixed field of $\ker(\rho_\lambda)$. The maximal abelian sub-extension of $M(A[\lambda])/M$ corresponds to the quotient $\GL_3(\ell) \xrightarrow{\det} \FF_\ell^*$.
    So either there is a unique cyclic cubic sub-extension corresponding to the further quotient $\GL_3(\ell) \twoheadrightarrow \FF_\ell^*/\FF_\ell^{* 3}$ (when $\ell \equiv 1 \pmod 3$), or there are none (when $\ell \not\equiv 1 \pmod 3$).
    Let $I$ denote the inertia group at a prime above $\ell$.
    By \cref{prop:inertia_at_l_description}, we know that $\det \rho_\lambda|_I = \theta_1$ or $\theta_1^2$.
    Hence this cyclic cubic sub-extension of $M(A[\lambda])/M$, if it exists, is ramified at both $\lambda$ and $\lambdabar$.
    
    Let $M_3$ denote the compositum of the cyclic cubic extensions cut out by the order $3$ Hecke characters $\chi_j$ from \cref{step:AlgC3HomsToF3} of \cref{Alg:C3}.
    Since $M_3$ is unramified outside $N$, it is disjoint from $M(A[\lambda])$. 
    Therefore, Chebotarev density theorem applied to the Galois extension $M_3 \cdot M(A[\lambda])$ of $M$ with Galois group $(\Z/3\Z)^d \times \GL_3(\ell)$, guarantees the existence of primes $\frakp_i|p_i$ in $M$ such that $\Tr(\rho_\lambda(\Frob_{\frakp_i})) \ne 0 \in \FF_\ell$ (so that $a_{p_i,1} \ne 0 \in \Z_\ell$) and $\chi_j(\frakp_i) = \delta_{i,j}$.
    So for $B$ large enough, the matrix $A$ constructed in 
    \cref{step:AlgC3ConstructMatrix} of \cref{Alg:C3} will have full rank. This shows that the algorithm returns a finite set of primes, and not $0$.

    Let $\ell \ge 5$ be a $\C_3$-prime for $A$. If $A$ does not have semistable reduction at $\ell$, then $\ell \mid N$ and we are done. So we may assume that $\ell$ is a semistable prime for $A$. Then $\rho_\lambda(G_M)$ is 
    an absolutely irreducible subgroup contained in a $\C_3$-type maximal subgroup 
    for a prime ideal $\lambda|\ell$. 
    Let $L/M$ be the cyclic cubic extension given by \cref{prop:C3test}.
    Since $L/M$ is unramified outside $N$, there exists a non-trivial character 
    $\chi \in H$ which cuts out $L$. Since the matrix $A$ has full rank, we deduce that 
    there exists a prime $p \in S$ and $\frakp | p$ such that $\chi(\frakp) \ne 0$. 
    This means that $\Frob_\frakp$ is non-trivial in $\Gal(L/M)$. Hence 
    \cref{prop:C3test} implies that 
    $\Tr\rho_\lambda(\Frob_\frakp) \equiv 0 \pmod{\lambda}$, 
    i.e., $a_{p,i} \equiv 0 \pmod{\lambda}$ for $i = 1$ or $2$, which implies that
    $\Nm_{M/\QQ}(a_{p,1}) = a_{p,1}\overline{a_{p,1}} = a_{p,2}\overline{a_{p,2}} \equiv 0 \pmod{\ell}$.
\end{proof}

While \cref{Alg:C3} turns the problem of determining $\C_3$-primes into a finite problem,
the output can include false positives,
and furthermore it does not say anything about primes of non-semistable reduction for $A$.
\cref{Alg:C3WeedOut} is a simple test to decide for any given prime $\ell$, whether it can possibly be a $\C_3$-prime by inspecting the reductions modulo $\ell$ of Frobenius polynomials.

\begin{algorithm}[Weeding out extraneous primes from possibly $\C_3$-primes]
    \label{Alg:C3WeedOut}
    {\em Input:} Same as in \cref{Alg:C2WeedOut}.\\
    {\em Output:} true or false depending on whether $\ell$ is possibly a $\C_3$-prime
    or not.
    \begin{enumerate}[leftmargin=!]
        \item \label{step:WeedC3ComputeLPolys} For each prime $p$, using the given $L$-polynomial, follow \cref{step:AlgC2ComputeLPolys} of \cref{Alg:C2} to construct $f_{p,i}(x) = x^3 + a_{p,i} x^2 + b_{p,i} x + c_{p,i}$, with $\overline{f_{p,1}} = f_{p,2}$.
        \item \label{step:WeedC3ReturnFalse} Choose a prime $\lambda$ above $\ell$ in $M$, and consider the reduction of $f_{p,1}(x)$ modulo $\lambda$.
        If this polynomial is reducible in $\FF_\lambda[x]$, but not of the form $x^3-a$ or $(x-a)^3$, then return False.
        \item \label{step:WeedC3ReturnTrue} If no return statement has been executed while running \crefrange{step:WeedC3ComputeLPolys}{step:WeedC3ReturnFalse},
        then return True.
    \end{enumerate}
\end{algorithm}

\begin{proposition}
    \label{prop:Alg:C3WeedOutworks}
    Let $A/\Q$ be a polarised abelian threefold, and $M$ be an imaginary quadratic field such that $M \cong \End^0(A_{\Qbar})$.
    Suppose $\ell > 7$ is a rational prime that is not a $\C_1$-prime.
    Then $\ell$ is not a $\C_3$-prime for $A$ if \Cref{Alg:C3WeedOut} returns false.
\end{proposition}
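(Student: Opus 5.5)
We argue by contraposition: assuming $\ell > 7$ is a $\C_3$-prime and not a $\C_1$-prime, we show that \cref{step:WeedC3ReturnFalse} of \cref{Alg:C3WeedOut} never fires. In other words, for every good prime $p$ split in $M$, the reduction of $f_{p,1}$ modulo $\lambda$ is either irreducible over $\FF_\lambda$, or of the form $x^3-a$, or of the form $(x-a)^3$. We prove this for the chosen prime $\lambda \mid \ell$ in \cref{step:WeedC3ReturnFalse}. The definition of $\C_3$-prime concerns \emph{some} prime above $\ell$. If $\ell$ is inert there is nothing to check. If $\ell$ is split, the relation $^\sigma\rho_\lambda=\rho_{\lambdabar}$ from \cref{thm:modellimagesarecontainedin} (using $\End^0(A_{\Qbar})\cong M$) shows that $\rho_\lambda(G_M)$ and $\rho_{\lambdabar}(G_M)$ are conjugate inside $\GL_3(\ell)$ up to the inverse-transpose-twist automorphism. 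That automorphism preserves $\C_3$-type subgroups, so both primes above $\ell$ have images in a $\C_3$-type subgroup.

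Next, $f_{p,1} \bmod \lambda$ is the characteristic polynomial of $\rho_\lambda(\Frob_\p)$ for a suitable $\p\mid p$, possibly after swapping $f_{p,1}$ and $f_{p,2}$, which corresponds to swapping $\lambda$ and $\lambdabar$ and is harmless by the previous paragraph. It therefore suffices to prove the following group-theoretic claim. Let $g$ lie in a $\C_3$-type maximal subgroup of $\CLe_3(\ell)$. In the split case this subgroup is $\psi(\FF_{\ell^3}^*\rtimes\Gal)$. In the inert case it is $\FF_\lambda^*\psi(\GU_1(\ell^3)\rtimes\Gal)$, which sits inside $\FF_{\ell^6}^*\rtimes\Gal(\FF_{\ell^6}/\FF_{\ell^2})$ acting on $\FF_{\ell^6}$ as a $3$-dimensional $\FF_\lambda$-space. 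The claim is that the characteristic polynomial of $g$ over $\FF_\lambda$ is irreducible, or $x^3-a$, or $(x-a)^3$. Uniformly, let $\FF=\FF_\lambda$ and $\FF'/\FF$ be cubic, and take $g\in \FF'^*\rtimes\Gal(\FF'/\FF)$.

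We split into two cases according to where $g$ lies.

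\textbf{Case $g=\psi(a)$ with $a\in\FF'^*$.} The eigenvalues of $g$ are $a,a^{q},a^{q^2}$ with $q=|\FF|$. If $a\notin\FF$, then $a$ generates $\FF'$ over $\FF$ because $3$ is prime, so the characteristic polynomial is the minimal polynomial of $a$ and is irreducible. If $a\in\FF$, the characteristic polynomial is $(x-a)^3$.

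\textbf{Case $g=a f^n$ with $n\not\equiv 0 \pmod 3$.} The computation in the proof of \cref{lem:C3_type_large_order_elements} gives $g^3=\Nm(a)\in\FF^*$. We also need $\Tr g=0$ and that the second elementary symmetric function of the eigenvalues vanishes. For this, pass to $V\otimes\FF'$ and use the eigenbasis of $\psi(\FF'^*)$, namely the three lines on which $\psi(a)$ acts by $a, a^q, a^{q^2}$. The Galois part $f^n$ permutes these three lines as a $3$-cycle, while the scalar part $a$ acts diagonally on them. Hence $g$ acts monomially with respect to this decomposition, with underlying permutation a $3$-cycle. \cref{lem:imprimitivity_min_polys}\eqref{item:order3elementcharpol} then gives characteristic polynomial $x^3-c$. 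Its hypothesis $\ell\ge5$ holds since $\ell>7$. Since this polynomial has coefficients in $\FF$, we get $c=\Nm(a)\in\FF$.

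The main obstacle is the inert case, where one must confirm that the maximal $\C_3$-subgroup of $\CU_3(\ell)$, including the extra scalars $\FF_\lambda^*$, really embeds in $\FF_{\ell^6}^*\rtimes\Gal(\FF_{\ell^6}/\FF_{\ell^2})$, with the Galois generators permuting the eigenlines of $\psi(\GU_1(\ell^3))$ cyclically. This is the description recorded in the proof of \cref{prop:C3test}, and the scalars lie in $\FF'^*$, so the dichotomy above applies unchanged. Combining the two cases, every reduction of $f_{p,1}$ is irreducible, $x^3-a$, or $(x-a)^3$. Hence the algorithm returns True, which proves the contrapositive.
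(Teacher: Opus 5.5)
Your proof is correct and is essentially the paper's argument. The key fact in both is that every element of a $\C_3$-type maximal subgroup has characteristic polynomial that is irreducible, $(x-a)^3$, or $x^3-a$. The paper asserts this, while you verify it by splitting into torus and Galois-twisted elements, using $g^3=\Nm(a)$ and \cref{lem:imprimitivity_min_polys}. Your transfer between $\lambda$ and $\lambdabar$ via ${}^\sigma\rho_\lambda=\rho_{\lambdabar}$ makes explicit a step the paper leaves implicit.

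One caveat, shared with the paper: identifying $f_{p,1}\bmod\lambda$ with the characteristic polynomial of some $\rho_\lambda(\Frob_\frakp)$ requires $p\neq\ell$. For $p=\ell$ its constant term is divisible by $\lambda$, so that prime should be excluded from the input.
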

\begin{proof}
    Let $\lambda|\ell$ be a prime of $\OM$, and $\FF = \FF_\lambda$. The characteristic polynomial of an element of the $\C_3$-type maximal subgroup has one of the following shapes:
    an irreducible cubic,
    $(x-a)^3$, or
    $x^3 - a$ for some $a \in \FF^*$. So, if the reduction of $f_{p,1}$ modulo $\lambda$ is not of these shapes, we can deduce that $\rho_\lambda(\Frob_\frakp)$ does not belong to the $\C_3$-type maximal subgroup for some prime $\frakp|p$. Hence $\ell$ is not a $\C_3$-prime for $A$.
\end{proof}

\begin{remark}
    It is worth noting that unlike \cref{Alg:C2WeedOut} which can eliminate any extraneous $\C_2$-prime given sufficiently many $L$-polynomials as input (\cref{prop:Alg:C2WeedOutworks}), \cref{Alg:C3WeedOut} is not guaranteed to eliminate extraneous $\C_3$-primes.
\end{remark}

\subsubsection{$\C_6$ symplectic type subgroups}
\label{subsec:C6test}

The description of inertia at primes above $\ell$ given in Proposition \ref{prop:inertia_at_l_description} shows us that for $\ell>7$ the projective image of $\rho_\lambda(G_{M})$ contains a cyclic element with order greater than $6$.
As $C_3^2 \rtimes \SL_2(3)$ does not contain such an element, we deduce
that $\rho_\lambda(G_M)$ cannot be contained in a $\C_6$-type maximal subgroup of $\CLe_3(\ell)$
unless $\ell =5$ and $\ell$ is inert in $M$ or $\ell=7$ and $\ell$ splits in $M$.

Characteristic polynomials of elements of the $\C_6$-type maximal subgroup are either reducible or of the form $x^3-a$ for some $a \in \FF$.
So exhibiting an irreducible Frobenius characteristic polynomial not of the form $x^3-a$ is sufficient to rule out 
$\ell \in \{5,7\}$ as $\C_6$-primes.
If $\ell \in \{5,7\}$ is a $\C_1$-prime, this simple strategy does not work.

\subsubsection{$\C_8/\C_5$-type orthogonal subgroups}
\label{subsec:Orthogonaltest}

Let $\ell > 2$ be an admissible prime for $A$, $\lambda$ be a prime of $M$ above $\ell$, and $\varepsilon = (\frac{\Delta_M}{\ell})$, and assume $\ell \not\equiv \varepsilon \pmod 3$.
Let $\SO_3(\ell)$ denote the subgroup of $\CLe_3(\ell)$ consisting of the determinant $1$ isometries of a non-degenerate symmetric bilinear pairing on the underlying vector space.
Then the similitude group of the pairing is $\FF_\lambda^* \SO_3(\ell)$, 
and it is a maximal subgroup of $\CLe_3(\ell)$. 
We first discuss 
the shape of characteristic polynomials of elements of this maximal subgroup.

\begin{lemma}
\label{lemma:CO_charpol}
Let $\sigma$ be any element in the maximal subgroup $\FF_\lambda^* \SO_3(\ell)$ of $\CLe_3(\ell)$, and $\phi$ be the characteristic polynomial of $\sigma$. Then there exists $\mu$ in $\FF_\ell^*$ if $\varepsilon = +1$ (or in $\FF_{\ell^2}^*$ if $\varepsilon = -1$) such that 
$\phi(x) = \frac{-x^3}{\mu^3} \phi\left(\frac{\mu^2}{x}\right)$, i.e., $\phi(x) = x^3 + a x^2 - a \mu x - \mu^3 = (x-\mu)(x^2+(a+\mu)x+\mu^2)$ for some $a$.
\end{lemma}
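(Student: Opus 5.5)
Write $\sigma = c\,s$ with $c$ a scalar in $\FF_\lambda^*$ (the centre of $\CLe_3(\ell)$) and $s \in \SO_3(\ell)$; here $\SO_3(\ell)$ is realised inside $\CLe_3(\ell)$ as the determinant-one isometries of a non-degenerate symmetric bilinear form $B$ defined over $\FF_\ell$. The plan is to first establish the functional equation for $s$ itself, and then transport it to $\sigma$ by rescaling.

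\textbf{Step 1: the case $\sigma = s \in \SO_3(\ell)$.} Since $s$ preserves $B$, we have $s^t G s = G$, where $G$ is the Gram matrix of $B$. Hence $s$ is conjugate to $s^{-t}$, so $s$ and $s^{-1}$ have the same characteristic polynomial $\psi$. The eigenvalue multiset is therefore closed under inversion. Because the dimension is odd and $\det s = 1$, at least one eigenvalue equals $1$: pair off the non-self-inverse eigenvalues, and the remaining ones are $\pm 1$ with product $1$. So $\psi(x) = (x-1)(x-\beta)(x-\beta^{-1}) = (x-1)(x^2 - tx + 1)$. Equivalently, $\psi(x) = -x^3\psi(1/x)$, since $\psi$ is self-reciprocal up to the sign $-\det s = -1$.

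\textbf{Step 2: adding the scalar.} For $\sigma = cs$, the characteristic polynomial is $\phi(x) = c^3\psi(x/c)$. Set $\mu = c$. The eigenvalues of $\sigma$ are then $\mu$, $\mu\beta$ and $\mu\beta^{-1}$, so
\[
\phi(x) = (x-\mu)(x^2 - \mu t x + \mu^2).
\]
Writing $a + \mu = -\mu t$ gives the stated form $(x-\mu)(x^2+(a+\mu)x+\mu^2)$. Expanding, this is $x^3 + ax^2 - a\mu x - \mu^3$, and the identity $\phi(x) = -\tfrac{x^3}{\mu^3}\phi(\mu^2/x)$ follows from $\psi(x) = -x^3\psi(1/x)$ by direct substitution.

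\textbf{Step 3: the field containing $\mu$.} This is the main point needing care. We have $\mu = c \in \FF_\lambda^*$, which is $\FF_\ell^*$ if $\varepsilon = +1$ and $\FF_{\ell^2}^*$ if $\varepsilon = -1$. This is exactly what is claimed, provided the maximal subgroup really is the product of the centre of $\CLe_3(\ell)$ with $\SO_3(\ell)$, as stated in the description of the $\C_8/\C_5$ class. If instead one wants to work with general similitudes $\sigma$ of $B$ (so $\sigma^t G\sigma = \nu G$), the same argument applies with $\mu$ a square root of the multiplier, by comparing $\sigma$ with $\nu\sigma^{-t}$. Since $\ell \not\equiv \varepsilon \pmod 3$, cubing is a bijection on the relevant centre, so $\det$ restricted to the scalars loses nothing. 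In particular $\FF_\lambda^*\SO_3(\ell)$ coincides with the full similitude group, and the decomposition $\sigma = cs$ is always available.
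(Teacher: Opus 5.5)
Your proof is correct and follows the paper's argument. Write $\sigma=\mu\tau$ with $\mu\in\FF_\lambda^*$ and $\tau\in\SO_3(\ell)$; use ${}^t\tau X\tau=X$ to see that the eigenvalues of $\tau$ are stable under inversion, hence equal to $\{1,\beta,\beta^{-1}\}$ because $\det\tau=1$ in odd dimension; then scale by $\mu$.

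The closing remarks in Step 3 are unnecessary, since the hypothesis $\sigma\in\FF_\lambda^*\SO_3(\ell)$ already gives $\mu\in\FF_\lambda^*$. The justification offered there is also misattributed. For $\varepsilon=+1$, every similitude of $B$ with multiplier $\nu$ is a scalar times an element of $\SO_3(\ell)$ for any odd $\ell$, because $\det(\sigma)^2=\nu^3$ forces $\nu=(\det\sigma/\nu)^2$. The condition $\ell\not\equiv\varepsilon\pmod 3$ is instead what is needed for this subgroup to be maximal.
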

\begin{proof}
Let $\mu$ denote the scalar, considered as a scalar matrix, such that $\sigma = \mu \tau$ for some $\tau \in \SO_3(\ell)$. Let $X$ be the matrix of the symmetric bilinear pairing. Then $^t\tau X \tau = X$, and therefore $^t\tau$ and $\tau^{-1}$ are conjugate. Therefore the set of eigenvalues of $\tau$ is stable under inversion. Since $\det(\tau) = 1$, we deduce that the set of eigenvalues of $\tau$ is equal to $\{1, \nu, \nu^{-1} \}$ for some $\nu \in \Bar \FF_\ell$. Therefore the eigenvalues of $\sigma$ are $\{ \mu, \mu \nu, \mu \nu^{-1} \}$, from which the shape of its characteristic polynomial is evident.  
\end{proof}

\begin{algorithm}[Possibly $\C_8/\C_5$-primes]
    \label{Alg:CO}
    \hfill\\
    {\em Input:} Same as in \cref{Alg:C2}.\\
    {\em Output:} a finite superset of the $\C_8/\C_5$-primes, or $0$.
    \begin{enumerate}[leftmargin=!]
        \item \label{step:AlgCOComputeLPolys} For each prime $p < B, p \nmid N$, follow \cref{step:AlgC2ComputeLPolys} of \cref{Alg:C2} to construct 
        $f_{p,i}(x) = x^3 + a_{p,i} x^2 + b_{p,i} x + c_{p,i}$, with 
        $\overline{f_{p,1}} = f_{p,2}$, from the $L$-polynomial.
        \item \label{step:AlgCOGetValues}
        Let $S$ denote the set of primes $p$ such that 
        $v_p \coloneqq b_{p,1}^3 - c_{p,1} a_{p,1}^3 \ne 0$.
        \item \label{step:AlgCO_Fail} If $S$ is empty, return $0$.
        \item \label{step:AlgCO_Success} Return the set of all rational primes dividing $\gcd\limits_{p \in S} \Nm_{M/\Q}(v_p)$.
    \end{enumerate}
\end{algorithm}

\begin{proposition}
\label{prop:COtest}
    Let $A/\Q$ be a polarised abelian threefold, and $M$ be an imaginary quadratic field such that $M \cong \End^0(A_{\Qbar})$.
    Then, for $B$ large enough, \cref{Alg:CO} returns a finite superset of the $\C_8/\C_5$-primes for $A$.
\end{proposition}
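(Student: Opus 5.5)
Two things need proving. First, the algorithm does not return $0$: some $p<B$ has $v_p\neq 0$. Second, if $\ell$ is a $\C_8/\C_5$-prime, then $\ell$ divides $\Nm_{M/\Q}(v_p)$ for every $p\in S$.

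We start with the divisibility claim, which is pure group theory combined with \cref{lemma:CO_charpol}. Suppose $\rho_\lambda(\GM)\subseteq \FF_\lambda^*\SO_3(\ell)$ for some $\lambda\mid\ell$, and let $p\in S$ with $p\neq\ell$. For $p$ split, fix $\frakp\mid p$ and the labelling with $f_{p,1}\equiv$ the characteristic polynomial of $\rho_\lambda(\Frob_\frakp)$ modulo $\lambda$; the other labelling gives the analogous statement modulo $\lambdabar$. For $p$ inert, use $\Frob_p^2$. By the lemma, $f_{p,1}\equiv x^3+ax^2-a\mu x-\mu^3 \pmod\lambda$, so $a_{p,1}\equiv a$, $b_{p,1}\equiv -a\mu$ and $c_{p,1}\equiv-\mu^3$. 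Then
\[ b_{p,1}^3-c_{p,1}a_{p,1}^3\equiv -a^3\mu^3+\mu^3a^3=0 \pmod\lambda. \]
Hence $\lambda\mid v_p$ (or $\lambdabar\mid v_p$), so $\ell\mid\Nm_{M/\Q}(v_p)$. For $p=\ell$ there is a subtlety: either $\ell$ never lies in $S$ because $\ell\mid N$, or we simply note that finitely many such primes may be added. The cleanest fix is to have the gcd include a factor $p$, as in \cref{Alg:C1}. I would handle this by observing that the returned set still contains $\ell$ whenever $S$ contains some $p\ne\ell$, which can be arranged by taking $B$ large enough in the second step below.

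Next, non-emptiness of $S$. By \cref{assumption:surjective}, choose a split prime $\ell_0$ and $\lambda_0\mid\ell_0$ with $\rho_{\lambda_0}(\GM)=\GL_3(\ell_0)$, taking $\ell_0$ large. Pick $g\in\GL_3(\ell_0)$ whose characteristic polynomial $x^3+ax^2+bx+c$ satisfies $b^3\neq ca^3$. A diagonal matrix $\mathrm{diag}(1,1,t)$ works: its characteristic polynomial is $(x-1)^2(x-t)$, giving $a=-(2+t)$, $b=1+2t$ and $c=-t$, and the condition $b^3 = ca^3$ is a cubic equation in $t$ that holds for at most $3$ values of $t$. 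Chebotarev then yields a prime $\frakp$ of degree one over a split $p\nmid N\ell_0$ with $\rho_{\lambda_0}(\Frob_\frakp)$ conjugate to $g$. Thus $v_p\not\equiv0\pmod{\lambda_0}$ for the appropriate labelling, and hence $v_p\ne0$ for both labellings, since they are conjugate. So $S\neq\emptyset$ for $B$ large, and we may take $B$ large enough that $S$ contains at least two primes, which settles the $p=\ell$ issue.

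The main delicate point is the bookkeeping between $\lambda$ and $\lambdabar$ and the labelling of $f_{p,1}$ versus $f_{p,2}$. Taking norms resolves it, since $\Nm(v_p)$ is symmetric under the swap. The hypothesis $\ell\not\equiv\varepsilon\pmod 3$ is only relevant to the existence of the maximal subgroup, so it plays no role in the proof.
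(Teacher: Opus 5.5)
Your divisibility argument is the paper's: reduce $f_{p,1}$ modulo $\lambda$ (or $\overline{\lambda}$), apply \cref{lemma:CO_charpol}, and conclude $b_{p,1}^3-c_{p,1}a_{p,1}^3\equiv 0$. Taking norms to absorb the labelling is fine. Two other steps, however, do not go through as written.

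\textbf{Non-emptiness of $S$.} \cref{Alg:CO} builds $f_{p,1}$ via \cref{step:AlgC2ComputeLPolys} of \cref{Alg:C2}. That step skips every split prime $p$ for which $f_p$ does not factor over $\OM$ into exactly two irreducible cubics, so such $p$ never enter $S$. A Frobenius conjugate to $\mathrm{diag}(1,1,t)$ has reducible characteristic polynomial modulo $\lambda_0$. This gives no control over whether $f_{p,1}$ is irreducible over $M$, so you have not shown that your $p$ lies in $S$.

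The paper instead takes $\rho_{\lambda_0}(\Frob_\frakp)$ with irreducible characteristic polynomial over $\FF_{\ell_0}$. A monic factor of $f_{p,1}$ over $M$ has coefficients in $\OM$, and since $\lambda_0$ has degree one it would reduce to a proper factor modulo $\lambda_0$. Hence $f_{p,1}$ is irreducible and $p$ survives.

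You also need to exclude the shape $x^3-c$, for which $b^3-ca^3=0$ as well; the paper's proof passes over this. Take $g$ to be the image of a generator of $\FF_{\ell_0^3}^*$. If $a\neq 0$, then $b^3=ca^3$ would make $-b/a\in\FF_{\ell_0}$ a root, contradicting irreducibility. If $a=b=0$, then $g^3\in\FF_{\ell_0}^*$, which is impossible since $g$ has order $\ell_0^3-1>3(\ell_0-1)$. Incidentally, for $\mathrm{diag}(1,1,t)$ one gets $b^3-ca^3=-(t-1)^3(t+1)$, a quartic rather than a cubic; this slip is harmless.

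\textbf{The case $p=\ell$.} Your proposed resolution is not valid. The prime $\ell$ is returned exactly when $\ell$ divides $\Nm_{M/\QQ}(v_p)$ for \emph{every} $p\in S$. So if $\ell\in S$, the term $\Nm_{M/\QQ}(v_\ell)$ itself must be divisible by $\ell$, and adding further primes to $S$ can only shrink the gcd. The paper's proof silently assumes $p\neq\ell$ here. The gap closes because every $p\in S$ divides $\Nm_{M/\QQ}(v_p)$.

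Inert primes never lie in $S$. The roots of $f_{p,1}$ form a multiset of size $3$ stable under $\gamma\mapsto p^2/\gamma$. Hence they have the form $\{\mu,\gamma,\mu^2/\gamma\}$ with $\mu=\pm p$ (cf.\ \cref{rem:inertLPolyshape}), which is exactly the shape in \cref{lemma:CO_charpol}, so $v_p=0$.

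For $p=\frakp\overline{\frakp}$ split, the roots of $f_{p,1}$ are Weil $p$-numbers, so at any place above $p$ (normalised by $v(p)=1$) their valuations lie in $[0,1]$. Moreover $c_{p,1}\overline{c_{p,1}}=p^3$, so $c_{p,1}$ has valuation at least $2$ at $\frakp$ or at $\overline{\frakp}$. At that place the two smallest root valuations sum to at least $2-1=1$. Hence $v(b_{p,1})\geq 1$ and $v(c_{p,1})\geq 2$, so $v(v_p)\geq 2$. Thus $v_p$ lies in $\frakp$ or $\overline{\frakp}$, and $p\mid\Nm_{M/\QQ}(v_p)$. Inserting a factor $p$ as in \cref{Alg:C1} would also work, but that proves a statement about a different algorithm.
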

\begin{proof}
    We first show that \cref{Alg:CO} does not return $0$, if $B$ is large enough, i.e., if sufficiently many $L$-polynomials are given as input. It is enough to show that there exists some prime $p$ such that the quantity $v_p$ in \cref{step:AlgCOGetValues} is not zero.
    By \cref{assumption:surjective}, choose a rational prime $\ell$ splitting in $M$, and $\lambda$ above $\ell$ so that $\rho_\lambda(G_M) = \GL_3(\ell)$. By Chebotarev density theorem there exists a prime $\frakp|p$ in $M$ such that the characteristic polynomial of $\rho_\lambda(\Frob_{\frakp})$ is irreducible over $\FF_\ell$. Therefore $f_{p,1}$ is irreducible over $\Z_\ell$, and hence $v_p = b_{p,1}^3 - c_{p,1}a_{p,1}^3 \ne 0$, so we are done.
    
    Let $\ell$ be an admissible prime for $A$, and $\lambda$ be a prime of $M$ above $\ell$. Suppose that $\rho_\lambda(G_M)$ is contained the $\C_8/\C_5$-type orthogonal maximal subgroup of $\CLe_3(\ell)$. 
    Recall that the reduction of $f_{p,1}(x)$ modulo $\lambda$ is the characteristic polynomial of $\rho_\lambda(\Frob_\frakp)$ for some prime $\frakp$ of $M$ above $p$. So it should have the shape given in \cref{lemma:CO_charpol}. This implies that either $a_{p,1} \equiv b_{p,1} \equiv 0 \pmod{\lambda}$, or $(b_{p,1}/a_{p,1})^3 \equiv c_{p,1} \pmod{\lambda}$. Both cases imply that $v_p \coloneqq b_{p,1}^3 - c_{p,1} a_{p,1}^3 \equiv 0 \pmod{\lambda}$, which means that $\ell$ divides $\Nm_{M/\Q} v_p$. Therefore, $\ell$ is in the set of primes outputted by \cref{Alg:CO}.
\end{proof}

\subsubsection{$\mathcal{S}$-type subgroups}
\label{subsec:ClassStest}

In \cref{subsec:ClassS_maximalgroup}, we mentioned necessary congruence conditions for the existence of an $\mathcal{S}$-type maximal subgroup of $\CLe_3(\ell)$ with projective image $\PSL_2(7)$. They force $\ell=3$ and $\varepsilon=-1$, or $\ell \geq 11$.
The description of inertia at primes above $\ell$ given in Proposition \ref{prop:inertia_at_l_description} shows us that for $\ell \geq 11$ the projective image of $\rho_\lambda(G_{M})$ contains a cyclic element of order at least $10$.
As $\PSL_2(7)$ does not contain such an element, we deduce that $\rho_\lambda(G_M)$ cannot be contained in such an $\mathcal{S}$-type maximal subgroup of $\CLe_3(\ell)$
unless $\ell = 3$ and $\ell$ is inert in $M$.

If $3$ is inert in $M$, to rule out containment of $\rho_3(G_M)$ in the $\mathcal{S}$-type  maximal subgroup, one needs to exhibit a Frobenius characteristic polynomial that does not come from an element of this maximal subgroup.

\subsubsection{Semistability criterion}
In this section, as before, $A$ denotes an abelian threefold defined over $M$ such that there exists a unital algebra embedding $\iota \colon M \hookrightarrow \End^0(A_{\overline{M}})$.
For a prime $\p$ of $M$, let $M_\p$ denote the completion of $M$ at $\p$ and $\OO_\p$ its ring of integers.
Let $A_\p$ denote the special fibre of the Néron model of $A$ over $\OO_\p$.
Let $A^0_\p$ denote the connected component of the identity of $A_\p$.
Recall that there exists a short exact sequence
\[0 \rightarrow A_\p^0 \rightarrow A_\p \rightarrow\Phi_{A,\p} \rightarrow 0\]
where $\Phi_{A,\p}$ is a finite étale group scheme over $k_\p \coloneqq \OO_\p/\p\OO_\p$.
Recall that $A$ is said to have semistable reduction at $\p$ if there exists an exact sequence
\[0 \rightarrow T_\p \rightarrow A_\p^0\rightarrow B_\p \rightarrow 0 \]
where $T_\p$ a torus  and $B_\p$  is an abelian variety.
The dimension of $T_\p$ is known as the toric rank of $A_\p$.
We say that $A$ has bad semistable reduction at $\p$ if it has semistable reduction and the toric rank of $A_\p$ is positive.

\begin{theorem}
    \label{thm:semistable_rules_out_C_2,C_3,C_6}
    Suppose $A$ has bad semistable reduction at a prime $\p$ of $M$.
    Let $\ell \not \in \p$ be an odd rational admissible prime not dividing the order of $\Phi_{A,\p}$ and $\lambda$ a prime of $M$ lying above $\ell$.

    Then either $G_M$ acts reducibly on $A[\lambda]$ or $\rho_\lambda(G_M) \cong \CLe_3(\ell)$.
\end{theorem}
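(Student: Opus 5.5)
We will exhibit a transvection-like element in $\rho_\lambda(G_M)$ coming from inertia at $\p$, and then use the classification of maximal subgroups in \cref{section_maximal_sbgps}. By Grothendieck's description of semistable reduction \cite[Exposé IX]{SGA7}, the inertia group $I_\p$ acts unipotently on $T_\ell(A)$. Pick $\sigma\in I_\p$ whose image in the tame quotient is a topological generator. Then $\rho_{\ell^\infty}(\sigma)-1=N$ is nilpotent with $N^2=0$, and its rank equals the toric rank $t>0$. Since the action of $I_\p$ commutes with $\iota(\OO)$, the operator $N$ is $\OO\otimes\ZZ_\ell$-linear, so it decomposes along $T_\ell(A)=\prod_{\lambda|\ell}T_\lambda(A)$.

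The first step is to show that $N$ does not vanish modulo $\lambda$. By the monodromy pairing theory of Grothendieck, the image of $N$ is identified with $X\otimes\ZZ_\ell$, where $X$ is the character group of the torus. The order of the cokernel of the monodromy pairing $X\times X^\vee\to\ZZ$ is $|\Phi_{A,\p}|$. Because $\ell\nmid|\Phi_{A,\p}|$, the induced map $N\colon T_\ell/T_\ell^{I}\to T_\ell^{I}$ is a saturated embedding, so $N\bmod\ell$ has the same rank as $N$. Next, $X\otimes\QQ$ carries an $M$-action, namely that of $\iota(\OO)$ on the toric part. This makes $t$ even, so $t=2$ is the only option compatible with $g=3$ and $N^2=0$. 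Then $X\otimes\ZZ_\ell$ is free of rank one over $\OO\otimes\ZZ_\ell$, which gives a $\lambda$-component $\bar N_\lambda$ of rank exactly $1$ on $A[\lambda]$. Hence $u=\rho_\lambda(\sigma)$ is a nontrivial unipotent element with $u-1$ of rank one: a transvection, of order $\ell$. In the unitary case a rank-one nilpotent preserving the Hermitian form is an isotropic transvection, which lies in $\SU_3(\ell)$.

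Now assume $G_M$ acts irreducibly on $A[\lambda]$, and go through the maximal subgroups of $\CLe_3(\ell)$ not containing $\SL^\varepsilon_3(\ell)$. For $\C_2$, $\C_3$ and $\C_6$, the image modulo scalars has order prime to $\ell$ once $\ell>3$. For $\ell=3$ we instead use the fact that transvections do not lie in these subgroups: a transvection fixes a hyperplane pointwise, which forces it to stabilise one of the lines of a $\C_2$ decomposition and be trivial on the quotient. The same reasoning works for $\C_3$ using the shape of $\GL_1(\FF')\rtimes\mathrm{Gal}$, and for $\C_6$ via the order of $\ASL_2(3)$. For $\C_8/\C_5$, elements of $\FF^*\SO_3$ have eigenvalues $\mu,\mu\nu,\mu\nu^{-1}$. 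A unipotent element in $\SO_3$ has a single Jordan block of size $3$ (for odd $\ell$), so it is never a transvection. For $\mathcal{S}$, transvections are excluded by the classical theorem (McLaughlin's, for $\ell$ odd) that an irreducible subgroup of $\SL^\varepsilon_3(\ell)$ generated by transvections is a classical group. More directly, apply that theorem to the normal subgroup generated by the conjugates of $u$ inside $\rho_\lambda(G_M)$. This normal subgroup is irreducible by Clifford theory: it is nontrivial and normal in an irreducible group, and it cannot be a sum of lines permuted, because a transvection cannot preserve such a decomposition. McLaughlin's theorem then shows it contains $\SL^\varepsilon_3(\ell)$.

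Finally, $\rho_\lambda(G_M)\supseteq\SL^\varepsilon_3(\ell)$, combined with the determinant surjectivity of \cref{prop:det_surjects_and_GM_surjects_mod_lambda_iff_GMZetaell_surjects} and the surjectivity of $\chi_\ell$, gives $\rho_\lambda(G_M)=\CLe_3(\ell)$. In the unitary case we also account for the scalars and $\mathrm{Nm}$. I expect the main obstacle to be the first step: using $\ell\nmid|\Phi_{A,\p}|$ to show the monodromy survives modulo $\lambda$ with rank exactly one. This needs Grothendieck's orthogonality theorem together with the $\OO$-equivariance of the monodromy pairing, and it requires excluding toric rank $1$ or $3$ via the $M$-action on $X$. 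I would try this part first.
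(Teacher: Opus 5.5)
Your proposal is correct. It reaches the same transvection $u=\rho_\lambda(\sigma)$ as the paper, but both main steps go by a different route.

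\textbf{The inertia step.} The paper does not use the monodromy pairing. Serre--Tate give $A[\ell]^{I_\p}=A_\p[\ell]$, which has dimension $6-t<6$ when $\ell\nmid|\Phi_{A,\p}|$. So some $\sigma\in I_\p$ acts nontrivially with $(\rho_\lambda(\sigma)-1)^2=0$. In dimension $3$ the Jordan form then forces $\ker(\rho_\lambda(\sigma)-1)$ to be $2$-dimensional, so $\rho_\lambda(\sigma)$ is a transvection. Your parity argument ($t=2$, rank exactly one on each $\lambda$-component) is therefore not needed to get rank one. Its real value is that it justifies passing from nontriviality on $A[\ell]$ to nontriviality on $A[\lambda]$ when $\ell$ splits, which the paper asserts without comment. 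A cheaper justification is available: the Weil pairing identifies $A[\lambdabar]$ with $\Hom(A[\lambda],\mu_\ell)$, and $\mu_\ell$ is unramified at $\p$.

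\textbf{The group theory.} The paper excludes the Aschbacher classes one at a time, then appeals to the classification:
\begin{itemize}
\item Lemma 5.8 of \cite{Goodman_superelliptic} for $\C_2$ and $\C_3$;
\item element orders for $\C_6$;
\item \cite{Grove} for $\C_8/\C_5$;
\item orders together with the characteristic-$3$ representations of $\PSL_2(7)$ for $\mathcal{S}$.
\end{itemize}
Your Clifford-plus-McLaughlin argument makes this case analysis redundant. The normal closure of $u$ is irreducible: otherwise, by Clifford, it would split into three invariant lines and be diagonalisable, contradicting $u\neq 1$ unipotent. McLaughlin then gives $\SL_3(\ell)$ in the split case. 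In the inert case it gives $\SU_3(\ell)$, since $\SL_3(\ell^2)$ and $\SL_3(\ell)$ do not fit inside $\SU_3(\ell)$ by order.

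What this buys you is a uniform argument that needs no classification of maximal subgroups. It also covers $\ell=3$ without the $\PSL_2(7)$ check. The price is quoting a deeper theorem, whereas the paper's route stays within the list of maximal subgroups it already needs for its algorithms.

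Your final determinant/similitude step, using \cref{prop:det_surjects_and_GM_surjects_mod_lambda_iff_GMZetaell_surjects} and the surjectivity of $\chi_\ell$, makes explicit a step the paper leaves implicit. That step is what takes you from ``contains $\SL^\varepsilon_3(\ell)$'' to equality with $\CLe_3(\ell)$.
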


\begin{proof}
    By \cite[Prop. 3.5]{SGA7}, for any $\sigma \in I_\p$, we have that $(\sigma-1)^2$ annihilates $T_\ell(A)$.
    By \cite[Lemma 2]{Serre_Tate}, we have that $A[\ell]^{I_\p}=A_\p[\ell]$.
    As the order of $\Phi_{A,\p}$ is coprime to $\ell$, we have $\dim_{\FF_\ell}A_\p[\ell] = 6-t<6$ where $t$ is the toric rank of $A_\p$ by \cite[Lemma 1]{Serre_Tate}.
    Thus $I_\p$ acts non-trivially on $A[\ell]$ and hence also non-trivially on $A[\lambda]$.
    In particular, there exists $\sigma \in I_\p$ such that $\rho_\lambda(\sigma) \neq 1$ and $(\rho_\lambda(\sigma)-1)^2=0$.
    Note this means the minimal characteristic polynomial of $\rho_\lambda(\sigma)$ is $(x-1)^2$.
    
    Suppose $\rho_\lambda(G_M)$ is contained in a $\C_2$ or $\C_3$-type subgroup.
    Then $\rho_\lambda(G_M)$ preserves a decomposition $A[\lambda] \otimes_{\FF_\lambda} \overline{\FF}_\lambda = \bigoplus_{i=1}^3 V_3$.
    Applying \cite[Lemma 5.8]{Goodman_superelliptic}, we see this is only possible if $\ell=2$.
    Note that for $\ell \geq 5$, order considerations directly make this impossible.

    For $\ell =3$, there are no maximal $\C_6$-type subgroups in $\CL^\varepsilon_3(\ell)$.
    Thus let us suppose $\ell \geq 5$.
    The relation $(\rho_\lambda(\sigma)-1)^2=0$ implies $0=(\rho_\lambda(\sigma)-1)^\ell=\rho_\lambda(\sigma)^\ell-1$, coupling this with $\rho_\lambda(\sigma) \neq 1$, we deduce $\rho_\lambda(\sigma)$ has order $\ell$.
    Since the $\C_6$-type subgroups do not contain any elements of order $\ell$ for $\ell \geq 5$, we deduce that $\rho_\lambda(\GM)$ is not contained in a $\C_6$-type subgroup.

    Since the minimal polynomial of $\rho_\lambda(\sigma)$ is $(x-1)^2$ and $A[\lambda]$ has dimension $3$ over $\FF_\lambda$, we deduce from its Jordan Normal Form that $\ker(\rho_\lambda(\sigma)-1)$ is of dimension 2.
    Thus \cite[Cor. 6.10]{Grove} implies $\rho_\lambda(\sigma)$ does not preserve a non-degenerate symmetric pairing.

    If $\rho_\lambda(G_M)$ is contained in a $\mathcal{S}$-type subgroup, then $\rho_\lambda(G_M) $ is contained in a maximal subgroup isomorphic to $\FF^*_\lambda \times  \PSL_2(7)$.
    By \cref{subsec:ClassS_maximalgroup}, this can only happen if $\ell =3$ or $\ell \geq 11$.
    Since $\rho_\lambda(\sigma)$ has order $\ell$ and the order of $\FF^*_\lambda \times  \PSL_2(7)$ is coprime to $\ell$ for $\ell \geq 11$, we deduce $\ell =3$ is the only possibility.
    Examining the representations of $\PSL_2(7)$ over a field of characteristic 3, we find this maximal subgroup does not contain an element with minimal polynomial equal to $(x-1)^2$.
    Thus $\rho_\lambda(G_M)$ is not contained in a $\mathcal{S}$-type subgroup.

    It follows from the classification in \cref{section_maximal_sbgps} of maximal subgroups of $\CLe_3(\ell)$ not containing $\SL_3^\varepsilon(\ell)$, that either $G_M$ acts reducibly on $A[\lambda]$ or $\rho_\lambda(G_M) \cong \CLe_3(\ell)$.
\end{proof}

\section{Results}
\label{sec:results}
In this section we present the results of running the algorithms developed in \cref{section:criteriaforfullimage} on various families of curves, and a dataset of Picard curves with good reduction outside $\{2,3,5,7\}$.

For hyperelliptic curves, $L$-polynomials are computed efficiently using the algorithms devised and implemented in \cite{HS14,HS16,KS08}.
For Picard curves, $L$-polynomials are computable in average polynomial time, by combining the work \cite{Sut20} of Sutherland on computing $L_p(C,x) \pmod p$ for superelliptic curves, and the work \cite{AFP22} of Asif, Fite and Pentland to lift this to the true integral $L$-polynomial $L_p(C,x)$ when $C$ is a Picard curve.
We use these implementations to obtain $L$-polynomials efficiently for our computations. For hyperelliptic and Picard curves, we work with $L$-polynomials at all good primes $p < 2^{12}$. For other smooth plane quartic curves, we work with $L$-polynomials at all good primes $p < 2^{10}$.

Recall that $B_d$ denotes the quaternion algebra over $\QQ$ of discriminant $d$.
\subsection{Hyperelliptic Curves}

Consider the family of smooth genus $3$ hyperelliptic curves
\begin{align}
    \label{eq:Qi-family}
    y^2 = f(x) = x^7 + a x^5 + b x^3 + c x.
\end{align}
The map $(x,y) \mapsto (-x,iy)$ is an automorphism defined over $\Q(i)$. Hence the geometric endomorphism algebra of the Jacobian of a curve in this family contains $\Q(i)$, with equality for a generic curve. We considered the finite set of separable polynomials $f(x) \in \Z[x]$ in the box $\calB$ defined by $|a|, |b|, |c| \leq 100$. Note that multiple polynomials may define the same curve upto twist, or even upto isomorphism, and in these cases the results will be identical. But we have not reduced the dataset up to isomorphism/twists.

Among the \numprint{8079794} polynomials in this box, \cref{Alg:C1} fails for \numprint{1052}. These define curves that have a bigger geometric endomorphism algebra of Jacobian than $\Q(i)$. Most of these curves are twists of the curve $y^2 = x^7 + x$, or a curve in the family $y^2 = x^7 + ax^5 + ax^3 + x$. Heuristic computations indicate that the generic geometric endomorphism ring in this family is an order of index $2^5$ in $\Q(i) \times M_2(\Q)$.
\cref{table:otherQiCurveEndAlgs} displays several other non-CM endomorphism algebras, that were observed for a very few curves. 

\begin{center}
\begin{table}[h!]
\begin{center}
\begin{tabular}{|c|c||c|c|}
    \hline
    $\End^0(A_{\Qbar})$ & Index & $\End^0(A_{\Qbar})$ & Index\\
    \hline
    $\Q(i) \times M_2(\Q)$ & $5^3$ & 
    $\Q(i) \times M_2(\Q)$ & $2^2 \cdot 5^3$\\
    $\Q(i) \times M_2(\Q(\sqrt{-2}))$ & $2^4$ & 
    $\Q(i) \times M_2(\Q(\sqrt{-3}))$ & $2^{11}$\\
    $\Q(i) \times M_2(\Q(\sqrt{-5}))$ & $5^4$ & 
    $\Q(i) \times B_6$ & $2 \cdot 3^2$\\
    $M_3(\Q(i))$ & $3^8$ & & \\
    \hline
\end{tabular}
\caption{Some geometric endomorphism algebras $\End^0(A_{\Qbar})$, along with the index of $\End(A_{\Qbar})$ in a maximal order, 
that are observed numerically for Jacobians $A$ of a few hyperelliptic curves as in \cref{eq:Qi-family}.}
\label{table:otherQiCurveEndAlgs}
\end{center}
\end{table}
\end{center}

For the curves defined by the remaining \numprint{8078742} polynomials, \cref{Alg:C1,Alg:C2,Alg:C3,Alg:CO} succeed. After weeding out extraneous $\C_2$ and $\C_3$-primes using \cref{Alg:C2WeedOut,Alg:C3WeedOut}, the set of primes returned is contained in $\{3\}$. There are no $\C_6$-type maximal subgroups to worry about since $M = \Q(i)$.
We did not rule out containment of $\rho_3(G_M)$ in the $\mathcal{S}$-type maximal subgroup of $\CU_3(3)$.
Thus we find that, for all these curves, the set of non-surjective semistable
admissible primes is contained in 
$\{3\}$.
\cref{table:C3C2reducible_hypell} gives refined information about the individual outputs of 
these algorithms, i.e.,
the admissible primes $\ell$ at which the mod-$\lambda$ Galois image for $\lambda|\ell$ is 
possibly contained in one of the maximal subgroups from \cref{section_maximal_sbgps}.

\begin{center}
\begin{table}[h!]
\begin{center}
\begin{tabular}{|c|c|c|}
    \hline
    Superset of & Output & Number of polynomials\\
    \hline
    Semistable $\C_1$-primes & $\{\}$ & \numprint{8078648}\\
     & $\{3\}$ & \numprint{94}\\
    \hline
    $\C_3$-primes & $\{\}$ & \numprint{8078742}\\
    \hline
    $\C_2$-primes & $\{\}$ & \numprint{8078700}\\
     & $\{3\}$ & \numprint{42}\\
    \hline
    $\C_8/\C_5$-primes & $\{\}$ & \numprint{8078742}\\
    \hline
\end{tabular}
\caption{Output of \cref{Alg:C1,Alg:C2,Alg:C3,Alg:CO}, after removing extraneous primes, for polynomials in the box $\calB$ defining smooth curves 
with endomorphism ring $\Z[i]$.}
\label{table:C3C2reducible_hypell}
\end{center}
\end{table}
\end{center}

For $d=2,3,4$, there are several $1$-dimensional families of hyperelliptic curves $C_a$ over $\Q$ with generic geometric endomorphism ring equal to $\Z[\sqrt{-d}]$. In forthcoming work, we study these families in detail and in particular certify the extra endomorphisms. We consider the \numprint{1216767} rational numbers of height up to $1000$ for values of $a$.
For all but two of these curves, the set of 
admissible 
primes returned by \Cref{Alg:C1,Alg:C2,Alg:C3,Alg:CO} is 
empty.
The two exceptions give $3$ as a possibly reducible prime.
\cref{table:reducible_hypell_sqrtminusd} shows these families and summarises the data.

For the curves in the $\Z[\sqrt{-2}]$ and $\Z[\sqrt{-3}]$ families, we thus find that the mod-$\lambda$ image is $\CLe_3(\ell)$, for all $\lambda|\ell$ for all 
semistable
admissible primes $\ell$.
For the curves in the $\Z[\sqrt{-4}]$ families, we did not rule out containment of $\rho_3(G_M)$ in the $\mathcal{S}$-type maximal subgroup of $\CU_3(3)$, so the mod-$\lambda$ image is $\CLe_3(\ell)$, for all $\lambda|\ell$ for all 
semistable
admissible primes $\ell \ne 3$.

\begin{center}
\begin{table}[h!]
\begin{center}
\begin{tabular}{|p{5cm}|p{2.5cm}|p{1.7cm}|p{2cm}|}
    \hline
    Hyperelliptic family & Generic $\End(\Jac(C_a)_{\Qbar})$ & Semistable $\C_1$-primes & Number of curves\\
    \hline
    $y^2=(x+2)(x^2-2)$\newline$(x^4-4x^2+a)$ & $\Z[\sqrt{-2}]$ & $\{\}$ & \numprint{1216763}\\
    \hline
    $y^2=x((x^2-3)^3+$\newline$a(x+2)(x+1)^4)$ & $\Z[\sqrt{-4}]$ & $\{\}$ & \numprint{1216764}\\
    \hline
    $y^2=(x^4 +ax^2-\frac{a}{2})$\newline$(x^4 +2ax^2-2ax +\frac{a}{2})$ & $\Z[\sqrt{-4}]$ & $\{\}$ & \numprint{1216762}\\
    & & $\{3\}$ & \numprint{2}\\
    \hline
    $y^2 = x(x^2-x-1/12)$ \newline $((x^2-x-1/12)^2+ax^3)$ & $\Z[\sqrt{-3}]$ & $\{\}$ & \numprint{1216764}\\
    \hline
    $y^2 = (x^2-x+1)(x^2+x+1)$ \newline $(x(x^3+2)+a(2x^3+1))$ & $\Z[\sqrt{-3}]$ & $\{\}$ & \numprint{1216765}\\
    \hline
\end{tabular}
\caption{Output of \cref{Alg:C1} for smooth genus $3$ curves in $1$-parameter hyperelliptic families $C_a$, with $a \in \Q, \Ht(a) \le 1000$ and given endomorphism ring.}
\label{table:reducible_hypell_sqrtminusd}
\end{center}
\end{table}
\end{center}

In the theorem below, for $f \in \QQ[x]$ separable, we let $J_f$ denote the Jacobian of the smooth projective hyperelliptic curve defined by the affine model $y^2=f(x)$.
We also write $\rho_\ell \colon \GQ \rightarrow \Aut(J_f[\ell])$ for the associated mod-$\ell$ representation.
\begin{theorem}
\label{thm:inverse_Galois_explicit_curves}
       Let $\ell$ be a prime, not congruent to $1, 25, 121$ modulo $168$, then $\GammaU_3(\ell)$ is a Galois group over $\QQ$. 
        Furthermore, 
        \begin{itemize}
            \item for $\ell \equiv 2 \pmod{3}$, $\ell \neq 2$ and $f(x) = (x^2-x+1)(x^2+x+1)(x(x^3+2)+3(2x^3+1)))$, we have $\rho_\ell(\GQ) \cong \GammaU_3(\ell)$; 
            \item for $\ell \equiv 3 \pmod{4}$ and $f(x) = x((x^2-3)^3+5(x+2)(x+1)^4)$, we have $\rho_\ell(\GQ) \cong \GammaU_3(\ell)$;
            \item for $\ell \equiv 5,7 \pmod{8}$ and $f(x) = (x+2)(x^2-2)(x^4-4x^2+3)$, we have $\rho_\ell(\GQ) \cong \GammaU_3(\ell)$.
            \item for $\ell \equiv 3,5,6 \pmod{7}$, and the smooth plane quartic curve $C: 2x^3z+x^2y^2+x^2yz-xy^2z+2xyz^2+xz^3-y^4-2y^3z-2yz^3+z^4=0$, we have $\rho_\ell(\GQ) \cong \GammaU_3(\ell)$.
        \end{itemize}
\end{theorem}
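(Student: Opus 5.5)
We deduce the first assertion from the four explicit bullet points, and reduce each bullet to \cref{thm:big_lambda_image_gives_big_mod_ell_image_over_Q} plus the computations of \cref{section:criteriaforfullimage}.

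\emph{Covering the residue classes.} The four bullets treat $M=\QQ(\zeta_3)$, $\QQ(i)$, $\QQ(\sqrt{-2})$ and $\QQ(\sqrt{-7})$, and in each case the congruence condition says exactly that $\ell$ is inert in $M$. A prime $\ell$ is missed precisely when it splits in all four fields, that is, $\ell\equiv 1 \pmod 3$, $\ell \equiv 1 \pmod 8$ and $\ell \equiv 1,2,4 \pmod 7$. These are the three classes $1,25,121 \pmod{168}$, so the first assertion follows from the bullets. The prime $\ell=2$ is covered by the $\QQ(i)$ or $\QQ(\sqrt{-7})$ bullet, provided $2$ is inert there. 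If it is not, $\ell=2$ is handled by a separate direct computation.

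\emph{Each bullet.} Fix one curve with Jacobian $A$ and $M$ as above, and let $\ell$ be inert in $M$. The plan has three steps.

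\begin{enumerate}[(i)]
\item Certify that $\End^0(A_{\Qbar})\cong M$. The inclusion of $M$ comes from the explicit automorphism or correspondence of the family. Equality comes from the rigorous endomorphism computations (Costa et al.) or from reductions at two primes.
\item Determine the bad primes $N$ and the polarisation, which is principal for a Jacobian. Check that $\ell$ is admissible and semistable whenever $\ell\nmid N$. The finitely many $\ell$ dividing $N$, together with $\ell\in\{3,5,7\}$, are treated by hand.
\item Run \cref{Alg:C1,Alg:C2,Alg:C3,Alg:CO} on $L$-polynomials for $p<2^{12}$, and remove spurious $\C_2$- and $\C_3$-primes using \cref{Alg:C2WeedOut,Alg:C3WeedOut}. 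By \cref{thm:AlgC1works,thm:AlgC2works,thm:AlgC3works,prop:COtest}, the outputs contain every $\C_1$, $\C_2$, $\C_3$ and $\C_8/\C_5$-prime. By \cref{subsec:C6test,subsec:ClassStest}, $\C_6$ and $\mathcal{S}$ can only occur for $\ell\in\{3,5,7\}$.
\end{enumerate}

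If the outputs contain no inert primes, the classification in \cref{section_maximal_sbgps} gives $\rho_\lambda(G_M)=\CU_3(\ell)$, and then \cref{thm:big_lambda_image_gives_big_mod_ell_image_over_Q} gives $\rho_\ell(\GQ)\cong\GammaU_3(\ell)$. The algorithms themselves only produce superset outputs, and their proofs of nontriviality rely on \cref{assumption:surjective}. For specific curves, however, nontriviality is witnessed directly: the algorithm returns a nonzero $R$ and a full-rank matrix. The superset conclusions in \cref{thm:AlgC1works,thm:AlgC2works,thm:AlgC3works} only use that the computed gcd is nonzero, so the argument is unconditional.

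\emph{Main obstacle.} The hardest part is the small and bad primes. For the finitely many inert $\ell\mid N$ and $\ell\in\{3,5,7\}$ (including $\mathcal{S}$-type containment in $\CU_3(3)$ and $\C_6$ at $\ell=5$), the generic algorithms say nothing. There I would first try \cref{thm:semistable_rules_out_C_2,C_3,C_6}, using a prime of bad semistable reduction with $\ell\nmid\#\Phi$. Failing that, I would exhibit explicit Frobenius characteristic polynomials mod $\lambda$ that are impossible in each maximal subgroup: an irreducible cubic not of the form $x^3-a$ rules out $\C_1$, $\C_2$, $\C_6$ and the $\PSL_2(7)$ subgroup; an irreducible polynomial rules out reducibility; and a reducible one not of the form $(x-a)^3$ or $x^3-a$ rules out $\C_3$. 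Absolute irreducibility at these primes has to be shown separately, and this is where the choice of curve parameter ($a=3,5,3$) is tuned. If some small prime cannot be settled for a given curve, the fallback is to cover it with another member of the family.
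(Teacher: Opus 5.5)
Your overall plan matches the paper's proof: reduce each bullet to \cref{thm:big_lambda_image_gives_big_mod_ell_image_over_Q} plus the algorithm outputs, and deduce the first assertion from the bullets. Your residue-class bookkeeping for odd $\ell$ is also correct. The genuine gap is $\ell=2$. Since $2\not\equiv 1,25,121 \pmod{168}$, the first assertion does claim that $\GammaU_3(2)$ is a Galois group over $\QQ$. Yet no bullet covers $\ell=2$: $2$ is ramified in $\QQ(i)$ and $\QQ(\sqrt{-2})$, split in $\QQ(\sqrt{-7})$ (as $-7\equiv 1 \pmod 8$), and explicitly excluded from the $\QQ(\zeta_3)$ bullet. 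Your fallback ``separate direct computation'' is unspecified, and it cannot be done with these curves. For a genus $3$ hyperelliptic Jacobian, $\rho_2(\GQ)$ is the Galois group of $f$, a subgroup of $S_8$. But $3^4$ divides $|\GammaU_3(2)|=1296$ and does not divide $8!$. The missing idea, which is how the paper handles $\ell=2$, is that $|\GammaU_3(2)|=2^4\cdot 3^4$. So $\GammaU_3(2)$ is soluble (Burnside's $p^aq^b$ theorem), hence a Galois group over $\QQ$ by Shafarevich's theorem.

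Your proposed test for excluding the $\mathcal{S}$-type subgroup at $\ell=3$ is also wrong. The subgroup $\FF_9^*\times\PSL_2(7)$ of $\CU_3(3)$ contains elements of order $7$. Their eigenvalues are $\zeta,\zeta^2,\zeta^4$ for a primitive $7$th root of unity $\zeta\in\FF_{3^6}$, and these form a single orbit under $x\mapsto x^9$. So the characteristic polynomial is irreducible over $\FF_9$. Its trace $\eta=\zeta+\zeta^2+\zeta^4$ satisfies $\eta^2+\eta+2=0$, so $\eta\neq 0$. Hence neither it nor any scalar multiple has the form $x^3-a$. A Frobenius with an ``irreducible cubic not of the form $x^3-a$'' is therefore fully compatible with $\mathcal{S}$-containment. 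You need instead a Frobenius whose characteristic polynomial cannot occur in this subgroup. For example, take one whose image in $\mathrm{PGL}_3(\FF_9)$ has order outside $\{1,2,3,4,7\}$, which can be read off from eigenvalue ratios. This is the extra Frobenius check the paper performs for the $\QQ(i)$ curve. Since $3\equiv 3\pmod 7$, the prime $3$ is also inert in $\QQ(\sqrt{-7})$, so the quartic needs the same check.

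Finally, the bad-prime obstacle you anticipate does not occur. The discriminants $-2^{12}3^9 7^6$, $2^{14}5^4 17^5$ and $2^{13}3^3$, together with the good reduction of $C$ outside $\{2,7\}$, show that every $\ell$ in each residue class is a prime of good reduction. Such primes are semistable and admissible. This is exactly how the paper reduces the bullets to its computations.
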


\begin{proof}
    The group $\GammaU_3(2)$ is soluble and thus a Galois group over $\QQ$ by Shafarevich's Theorem for soluble groups \cite{Shafarevich_soluble_groups_are_Galois_groups}.
    Hence we may assume $\ell >2$.
    Thus to prove $\GammaU_3(\ell)$ is a Galois group over $\QQ$ for $\ell$ not congruent to $1, 25, 121$ modulo $168$, it suffices to prove the `furthermore' claim of the statement.

    The last claim is immediate from the computation described in \cref{subsec:other_smoothplanequartics}, using the fact that $C$ has good reduction outside $\{2,7\}$.
    To prove the given statement for any congruence class of primes, it suffices
    by 
    \cref{table:reducible_hypell_sqrtminusd} 
    and the computational results mentioned prior to \cref{table:reducible_hypell_sqrtminusd}
    to check that $J_f$ has good reduction at all primes in the given congruence class.
    To do so, it is enough verify that the given primes $\ell$ do not divide the discriminant of $f$.
    The discriminant of the polynomial $(x^2-x+1)(x^2+x+1)(x(x^3+2)+3(2x^3+1))$ is $-2^{12} \cdot 3^9 \cdot 7^6$, thus for $\ell \equiv 2 \pmod{3}$, $\ell \neq 2$ we have $\rho_\ell(\GQ) \cong \GammaU_3(\ell)$.
    The discriminant of the polynomial $x((x^2-3)^3+5(x+2)(x+1)^4)$ is $2^{14} \cdot 5^4 \cdot 17^5$, thus for $\ell \equiv 3 \pmod{4}$ and $\ell > 3$ we have $\rho_\ell(\GQ) \cong \GammaU_3(\ell)$. For $\ell=3$, we also compute enough Frobenius characteristic polynomials to rule out containment of $\rho_3(G_{\Q(i)})$ in the $\mathcal{S}$-type maximal subgroup, thus showing that $\rho_3(G_{\Q(i)}) \cong \CU_3(3)$ and hence $\rho_3(\GQ) \cong \GammaU_3(3)$.
    The discriminant of the polynomial $(x+2)(x^2-2)(x^4-4x^2+3)$ is $2^{13} \cdot 3^3$, thus for $\ell \equiv 5,7 \pmod{8}$ we have $\rho_\ell(\GQ) \cong \GammaU_3(\ell)$.
\end{proof}

\subsection{Picard curves}

We considered the finite set of separable quartic polynomials $f(x) = x^4 + a x^2 + b x + c \in \Z[x]$ lying in the box $\calB$ given by $|a|, |b|, |c| \leq 100$ and $b > 0$. Among the \numprint{4039833} polynomials in $\calB$, \cref{Alg:C1} fails for exactly \numprint{27}, indicating that the Jacobians of the corresponding Picard curves $y^3 = f(x)$ have extra geometric endomorphisms besides $\Z[\zeta_3]$. Heuristic computations confirm that these Jacobians are not simple. All but two of these \numprint{27} curves fall into the first family in \cref{table:newPicardCurveFamilies} up to twist. The remaining \numprint{2} curves fall into the second and fourth families in \cref{table:newPicardCurveFamilies} upto twist.

For the remaining \numprint{4039806} curves, \cref{Alg:C1,Alg:C2,Alg:C3} succeed. Recall that since $M=\Q(\zeta_3)$, there are no $\C_8/\C_5$-type or $\mathcal{S}$-type maximal subgroups to worry about. After weeding out using \cref{Alg:C2WeedOut,Alg:C3WeedOut}, 
we find that the set of non-surjective 
semistable
admissible primes is contained in $\{2,7\}$.
\cref{table:C3C2reducible_PicardBoxSearch} gives refined information.

\begin{center}
\begin{table}[h!]
\begin{center}
\begin{tabular}{|c|c|c|}
    \hline
    Superset of & Output & Number of curves\\
    \hline
    Semistable $\C_1$-primes & $\{2\}$ & \numprint{4039804}\\
    & $\{2,7\}$ & \numprint{2}\\
    \hline
    $\C_3$-primes & $\{\}$ & \numprint{4039806}\\
    \hline
    $\C_2$-primes & $\{\}$ & \numprint{2019805}\\
    & $\{2\}$ & \numprint{2019999}\\
    & $\{7\}$ & \numprint{2}\\
    \hline
    $\C_6$-primes & $\{7\}$ & \numprint{2}\\
    \hline
\end{tabular}
\caption{Output of \cref{Alg:C1,Alg:C2,Alg:C3}, after removing extraneous primes, for Picard curves in the box $\calB$.}
\label{table:C3C2reducible_PicardBoxSearch}
\end{center}
\end{table}
\end{center}

Now we report the results obtained from applying our algorithms on Sutherland's dataset of \numprint{3024913} Picard curves of $7$-smooth conductor. We note that the dataset includes curves that are twists. Our algorithms succeed for \numprint{2413173} curves. The failures consist of bielliptic Picard curves $y^3 = x^4 + a x^2 + c$, curves given by $y^3 = x^4 + bx$, and \numprint{13972} other curves for which heuristic computations indicate the presence of extra geometric endomorphisms besides $\Z[\zeta_3]$.

The \numprint{13972} failures correspond to \numprint{495} curves up to twist. For these curves, we used \cite{CMSV19} to heuristically compute the geometric endomorphism algebra $\End^0(\Jac(C)_{\Qbar})$ and the index of $\End(\Jac(C)_{\Qbar})$ in the maximal order. After sorting them according to this information, we discovered through interpolation of the invariants $a^3/b^2, ac/b^2$, interesting sub-families of Picard curves with generic geometric endomorphism ring strictly bigger than $\Z[\zeta_3]$ (see \cref{table:newPicardCurveFamilies}). While we had enough data points to obtain the first three families by simple interpolation, for the last one, we needed to look for a short vector in an integer lattice. \cref{table:otherPicardCurveEndAlgs} displays several other non-CM endomorphism algebras, that were observed for a very few curves. 

For the families in \cref{table:newPicardCurveFamilies}, we have heuristically verified the endomorphism algebra for all curves corresponding to rational numbers $a$ up to height $20$. Therefore, we believe that these represent special subvarieties of the Picard modular surface: the first two are certain rational modular curves $X_0(n)$
for some $n$ dividing $81$ and $192$ respectively, 
while the last two are quaternionic rational Shimura curves.

Let $C$ be a smooth Picard curve from one of the last two families in \cref{table:newPicardCurveFamilies}, corresponding to some generic value of $a \in \Q$.
Let $A \coloneqq \Jac(C)$, and fix an Abel-Jacobi map $C \hookrightarrow A$. Then $A$ contains an elliptic curve $E$ with $j$-invariant $0$, such that the induced map $C \hookrightarrow A \rightarrow E$ does not factor through an isogeny of elliptic curves.
Let $P$ denote the Prym variety corresponding to this map, so that $P$ is an abelian surface carrying a natural polarisation; the one obtained by restriction of the principal polarisation on $A$. 
The endomorphisms package of \cite{CMSV19} allows us to compute the period matrix of $P$, using idempotents in the endomorphism algebra. 
Using this, we verify that $\End(P_{\Qbar})$ is isomorphic to the maximal order in $B_{15}$ or an order of index $3$ in the maximal order in $B_6$ respectively. The latter order precludes principal polarisation. While the former does not, we numerically verify that the natural polarisation on $P$ is not principal.
\begin{center}
\begin{table}[h!]
\begin{center}
\begin{tabular}{|p{6.7cm}|p{2.4cm}|p{2.3cm}|}
    \hline
    $1$-parameter Picard curve family & Generic $\End^0(A_{\Qbar})$ heuristically & Index in maximal order of $\End(A_{\Qbar})$\\
    \hline
    $y^3 = x(x^3+12x^2+ax+64)$ & $\Q(\zeta_3) \times M_2(\Q)$ & $81 = 3^4$\\
    $y^3 = ((x+1)^2 - a(5a+1))((x-1)^2 + 27a(a+1))$ & $\Q(\zeta_3) \times M_2(\Q)$ & $192 = 2^6 \cdot 3$\\
    $y^3 = (x^2 - 27a(a-1)(3a^2-3a+32))^2 + 2916a^2(a-1)(x - (a-1)(7a-16))^2$ & $\Q(\zeta_3) \times B_{15}$ & $125 = 5^3$\\
    $y^3 = (x - 3a)\big( (x + a)^3 - a(17a^3 + 6a^2 + 24a + 8)(x + a) + a^2(27a^4 + 14a^3 + 60a^2 + 24a + 32) \big)$ & $\Q(\zeta_3) \times B_6$ & $108 = 2^2 \cdot 3^3$ \\
    \hline
\end{tabular}
\caption{Picard curve families with generic geometric endomorphism algebra of Jacobian $A$ strictly bigger than $\Q(\zeta_3)$.}
\label{table:newPicardCurveFamilies}
\end{center}
\end{table}
\end{center}

\begin{center}
\begin{table}[h!]
\begin{center}
\begin{tabular}{|c|c||c|c|}
    \hline
    $\End^0(A_{\Qbar})$ & Index & $\End^0(A_{\Qbar})$ & Index\\
    \hline
    $\Q(\zeta_3) \times M_2(\Q)$ & $3 \cdot 7^3$ & 
    $\Q(\zeta_3) \times M_2(\Q)$ & $3^7$\\
    $\Q(\zeta_3) \times M_2(\Q(\sqrt{-2}))$ & $3^7$ & 
    $\Q(\zeta_3) \times M_2(\Q(\sqrt{-6}))$ & $3^4$\\
    $\Q(\zeta_3) \times M_2(\Q(\sqrt{-15}))$ & $3^4$ & 
    $\Q(\zeta_3) \times M_2(\Q(\sqrt{-15}))$ & $5^4$\\
    $\Q(\zeta_3) \times M_2(\Q(\sqrt{-30}))$ & $5^4$ & 
    $M_3(\Q(\zeta_3))$ & $3^{13}$\\
    $M_3(\Q(\zeta_3))$ & $2^{16} \cdot 3^5$ & $M_3(\Q(\zeta_3))$ & $2^{24}$\\
    $\Q(\zeta_3) \times B_{10}$ & $2^2 \cdot 3 \cdot 5^2$ & 
    $\Q(\zeta_3) \times B_6$ & $2^8$\\
    \hline
\end{tabular}
\caption{Some geometric endomorphism algebras $\End^0(A_{\Qbar})$, along with the index of $\End(A_{\Qbar})$ in a maximal order, 
that are observed numerically for Jacobians $A$ of a few Picard curves.}
\label{table:otherPicardCurveEndAlgs}
\end{center}
\end{table}
\end{center}

Among the \numprint{2413173} curves, for which our algorithms succeed,
the largest possibly non-surjective semistable admissible prime found is $13$. It is obtained as a reducible prime for $5$ curves, all of which are twists of the curve:
\begin{align}
\label{eq:PicardCurve-13reducible}
    y^3 = 243 x^4 + 338 x^3 -147 x^2 - 387 x - 142
\end{align}
We also obtain a considerable number of curves having $5$ and $7$ as possibly a reducible prime. \cref{table:C3C2reducible_Picard} gives more detailed information.
\begin{center}
\begin{table}[h!]
\begin{center}
\begin{tabular}{|c|c|c|}
    \hline
    Superset of & Output & Number of curves\\
    \hline
    Semistable $\C_1$-primes & $\{2\}$ & \numprint{2412508}\\
    & $\{2,5\}$ & \numprint{279}\\
    & $\{2,7\}$ & \numprint{381}\\
    & $\{2,13\}$ & \numprint{5}\\
    \hline
    $\C_3$-primes & $\{\}$ & \numprint{2413173}\\
    \hline
    $\C_2$-primes & $\{\}$ & \numprint{2406496}\\
    & $\{2\}$ & \numprint{6559}\\
    & $\{7\}$ & \numprint{106}\\
    & $\{13\}$ & \numprint{5}\\
    & $\{2,7\}$ & \numprint{7}\\
    \hline
    $\C_6$-primes & $\{5\}$ & \numprint{279}\\
    & $\{7\}$ & \numprint{381}\\
    \hline
\end{tabular}
\caption{Output of \cref{Alg:C1,Alg:C2,Alg:C3}, after removing extraneous primes, for Picard curves from Sutherland's $7$-smooth  dataset.}
\label{table:C3C2reducible_Picard}
\end{center}
\end{table}
\end{center}

\subsection{Other smooth plane quartic curves.}
\label{subsec:other_smoothplanequartics}

As part of the work \cite{FKS25} on classification of Sato-Tate groups of abelian threefolds, Fite, Kedlaya and Sutherland conducted several large-scale enumerations of genus $3$ curves over $\Q$, with the aim of realising many Sato-Tate groups through such Jacobians. This was done by extending Sutherland's original computation \cite{Sut19} of the genus $3$ smooth plane quartic database, and retaining all smooth plane quartic models with integer coefficients of absolute value at most $9$, and whose discriminants are $7$-smooth (divisible only by primes $p \leq 7$), or of absolute value at most $10^9$. In their type B dataset, there are $17$ smooth plane quartic curves whose Jacobians appear to have geometric endomorphism algebra isomorphic to $\Q(\sqrt{-7})$. 
We certify the existence of extra endomorphisms for these $17$ curves using the endomorphisms package of \cite{CMSV19}, thus showing that there is a subring of $\End^0(\Jac(C)_{\Qbar})$ isomorphic to $\Z[\frac{1+\sqrt{-7}}{2}]$.
\cref{Alg:C2,Alg:C3,Alg:CO} return the empty set on each of these curves, while \cref{Alg:C1} returns $\{2\}$ for $5$ of these curves and the empty set for the remaining curves. 
Since \cref{Alg:C1,Alg:C2,Alg:C3,Alg:CO} succeed we conclude that the geometric endomorphism algebra does not merely contain $\Q(\sqrt{-7})$, but is rather isomorphic to it, in all these cases. 
The results of the algorithms inform us that the mod-$\ell$ Galois representations have as large an image as possible for every semistable admissible prime $\ell > 3$.

\printbibliography

\end{document}